\DeclareMathOperator{\lcm}{lcm}
\DeclareMathOperator{\lm}{lm}
\DeclareMathOperator{\lc}{lc}
\DeclareMathOperator{\NF}{NF}
\DeclareMathOperator{\spoly}{spoly}
\DeclareMathOperator{\Ker}{ker}
\DeclareMathOperator{\syz}{syz}
\DeclareMathOperator{\Ann}{Ann}
\DeclareMathOperator{\ann}{Ann}
\DeclareMathOperator{\ini}{in}
\DeclareMathOperator{\codim}{codim}
\newcommand{\lra}{\longrightarrow}
\newcommand{\ra}{\rightarrow}
\newcommand{\N}{{\mathbb N}}
\newcommand{\R}{{\mathbb R}}
\newcommand{\K}{{\mathbb K}}
\newcommand{\Z}{{\mathbb Z}}
\newcommand{\C}{{\mathbb C}}
\renewcommand{\d}{\partial }
\newcommand{\BS}{Bernstein-Sato\xspace}
\newcommand{\BM}{Brian\c{c}on-Maisonobe\xspace}
\newcommand{\eg}{e.~g.\xspace}
\newcommand{\ie}{i.~e.\xspace}
\newcommand{\comment}[1]{}
\newcommand{\doublequote}[1]{``#1''}      
\newcommand{\inw}{\ini_{(-w,w)}}
\newcommand{\inwh}{\ini_{(-w,w,0)}}
\newcommand{\vars}{\sigma}                
\theoremstyle{plain}
\newtheorem{lemma}{Lemma}[section]
\newtheorem{proposition}[lemma]{Proposition}
\newtheorem{theorem}[lemma]{Theorem}
\newtheorem{corollary}[lemma]{Corollary}
\theoremstyle{definition}
\newtheorem{definition}[lemma]{Definition}
\newtheorem{example}[lemma]{Example}
\newtheorem{algorithm}[lemma]{Algorithm}
\newtheorem{remark}[lemma]{Remark}
\author{Daniel Andres \and Michael Brickenstein \and Viktor Levandovskyy \and Jorge Mart\'{i}n-Morales \and Hans Sch\"onemann}
\title{Constructive $D$-module Theory with {\sc Singular}}
\date{}
\begin{document}

\maketitle

\begin{abstract}
We overview numerous algorithms in computational $D$-module theory together with the theoretical background as well as the implementation in the computer algebra system \textsc{Singular}.~We discuss new approaches to the computation of Bernstein operators, of logarithmic annihilator of a polynomial, of annihilators of rational functions as well as complex powers of polynomials. We analyze algorithms for local Bernstein-Sato polynomials and also algorithms, recovering any kind of Bernstein-Sato polynomial from partial knowledge of its roots. We address a novel way to compute the \BS polynomial for an affine variety algorithmically. All the carefully selected nontrivial examples, which we present, have been computed with our implementation. We address such applications as the computation of a zeta-function for certain integrals and revealing the algebraic dependence between pairwise commuting elements.

\vspace{0.15cm}
\noindent\textbf{Mathematics Subject Classification (2010).} 13P10, 14F10, 68W30.

\vspace{0.1cm}
\noindent\textbf{Keywords.} $D$-modules, non-commutative Gr\"obner bases, annihilator ideal, $b$-function, Bernstein-Sato polynomial, Bernstein-Sato ideal.
\end{abstract}

\section{Introduction}

Constructive $D$-module theory has been dynamically developing throughout the last years. There are new approaches, algorithms, implementations and applications. Our work on the implementation of procedures for $D$-modules started in 2003, motivated among other factors by challenging elimination problems in non-com\-mu\-ta\-tive algebras, which appear \eg in algorithms for computing Bernstein-Sato polynomials. We reported on solving several challenges in \cite{LM08}. A non-commutative subsystem \textsc{Singular:Plural} \cite{Plural} of the computer algebra system \textsc{Singular} provides a user with possibilities to compute numerous Gr\"obner bases-based procedures in a wide class  of non-com\-mu\-ta\-tive $G$-algebras \cite{LS03}. It was natural to use this functionality in the context of computational $D$-module theory. Nowadays we present a $D$-module suite in \textsc{Singular} consisting of the libraries \texttt{dmod.lib}, \texttt{dmodapp.lib}, \texttt{dmodvar.lib} and \texttt{bfun.lib}. There are many useful and flexible procedures for various aspects of $D$-module theory. These libraries are freely distributed together with \textsc{Singular} \cite{Singular}.

There are several implementations of algorithms for $D$-modules, namely the experimental program \textsc{kan/sm1} by N.~Takayama \cite{KAN}, the \texttt{bfct} package in \textsc{Risa/Asir} \cite{Asir} by M.~Noro \cite{Noro02} and the package \texttt{Dmodules.m2} in \textsc{Macaulay2} by A.~Leykin and H.~Tsai \cite{dmodMac2}. We aim at creating a $D$-module suite, which will combine flexibility and rich functionality with high performance, being able to treat more complicated examples.

In this paper we do not present any comparison between different computer algebra systems in the realm of $D$-modules, referring to \cite{LM08} and \cite{ALM09}. However, comparison in the latter articles shows, that our implementation is superior to \textsc{kan/sm1} and \textsc{Macaulay2} and in many cases more powerful than \textsc{Risa/Asir}.

Here is the list of problems we address in this paper:

\begin{itemize}
\item $s$-parametric annihilator of $f$ (Section \ref{sAnn}, see also \cite{LM08,ALM09}),
\item annihilator of $f^{\alpha}$ for $\alpha \in \C$ (Section \ref{anniA}, see also \cite{SST00}),
\item annihilator of a polynomial function $f$ and of a rational function $f/g$ (Section \ref{anniA}),
\item $b$-function with respect to weights for an ideal (Section \ref{bfctIdeal}, see also \cite{ALM09}),
\item global and local Bernstein-Sato polynomials of $f$ (Section \ref{bspolyA}),
\item partial knowledge of \BS polynomial (Section \ref{checkrootA}, see also \cite{LM08}),
\item Bernstein operator of $f$ (Section \ref{Boperator}),
\item logarithmic annihilator of $f$ (Section \ref{logannA}),
\item Bernstein-Sato ideals for $f = f_1 \cdot \ldots \cdot f_m$ (Section \ref{BMI}, see also \cite{LM08}),
\item annihilator and Bernstein-Sato polynomial for a variety (Section \ref{bsvarA}, see also \cite{ALM09}).
\end{itemize}

We describe both theoretical and implementational aspects of the problems above and illustrate them with carefully selected nontrivial examples, computed with our implementation in \textsc{Singular}. In Section \ref{newBM}, we give yet another alternative proof for the algorithm by \BM  for computing $\ann_{D_n[s]}(f^s)$, presented in \cite{ALM09}. Notably, this delivers additional structural information. In Section \ref{Boperator}, we compare several approaches for the computation of Bernstein operators. Using the method of principal intersection, we formalize several methods for computing \BS polynomials. Following Budur et al. \cite{BMS06} and \cite{ALM09}, we report on the implementation of two methods for the computation of \BS polynomials for affine varieties in a framework, which is a natural generalization of our approach to the algorithm by \BM.

\textit{Notations}. Throughout the article $\K$ is assumed to be a field of characteristic zero. By $R$ we denote the polynomial ring $\K[x_1,\ldots,x_n]$ and by $f\in R$ a non-constant polynomial.

We consider the $n$-th Weyl algebra as the algebra of linear partial differential operators with polynomials coefficients. That is $D_n = D(R) = \K\langle x_1,\dots,x_n,\partial_1,\dots,\partial_n \mid \{ \partial_i x_i = x_i \partial_i +1, \partial_i x_j = x_j \partial_i, i \not=j \}  \rangle$. We denote by $D_n[s] = D(R) \otimes_{\K} \K[s_1,\ldots,s_n]$ and drop the index $n$ depending on the context. 

The ring $R$ is a natural $D_n(R)$-module with the action
\[
x_i \bullet f(x_1,\ldots,x_n) = x_i \cdot f(x_1,\ldots,x_n), \quad 
\d_i \bullet f(x_1,\ldots,x_n) = \frac{\d f(x_1,\ldots,x_n)}{\d x_i}. 
\]

Working with monomial orderings in elimination, we use the notation $x \gg y$ for \doublequote{$x$ is greater than any power of $y$}.

Given an associative $\K$-algebra $A$ and some monomial well-ordering on $A$, we denote by $\lm(f)$ (resp. $\lc(f)$) the leading monomial (resp. the leading coefficient) of $f\in A$. Given a left Gr\"obner basis $G \subset A$ and $f\in A$, we denote by $\NF(f,G)$ the normal form of $f$ with respect to the left ideal ${}_{A}\langle G \rangle$. We also use the shorthand notation $h \ra_{H} f$ (and $h \ra f$, if $H$ is clear from the context) for the reduction of $h\in A$ to $f\in A$ with respect to the set $H$. If not specified, under \textit{ideal} we mean \textit{left ideal}. For $a,b \in A $, we use the Lie bracket notation $[a,b]:=ab - ba$ as well as the skew Lie bracket notation $[a,b]_{k}:=ab - k\cdot ba$ for $k\in\K^{*}$.

It is convenient to treat the algebras we deal with in a bigger framework of $G$-algebras of Lie type.

\begin{definition}\label{GalgLie}
Let  $A$ be the quotient of the free associative algebra $\K\langle x_1,\ldots,x_n\rangle$ by the two-sided ideal $I$, generated by the finite set $\{x_jx_i - x_ix_j-d_{ij} \} \; \forall 1\leq i<j \leq n$, where $d_{ij} \in \K[x_1,\ldots,x_n]$. $A$ is called a {\em $G$--algebra of Lie type} \cite{LS03}, if\\
$\bullet$ \; $\forall \; 1\leq i < j < k \leq n$ the expression $d_{ij}x_k - x_k d_{ij} + x_j d_{ik} - d_{ik}x_j + d_{jk}x_i - x_i d_{jk}$ reduces to zero modulo $I$ and,\\
$\bullet$ there exists a monomial ordering $\prec$ on $\K[ x_1,\ldots,x_n]$, such that $\lm(d_{ij}) \prec x_i x_j$, $\forall i < j$.
\end{definition}

$G$-algebras are also known as \textit{algebras of solvable type} \cite{KW,HLi} and \textit{PBW algebras} \cite{BGV}. We often use the following.

\begin{lemma}[Generalized Product Criterion, \cite{LS03}]\label{prodCrit}
Let $A$ be a $G$-algebra of Lie type and $f,g\in A$. Suppose $\lm(f)$ and $\lm(g)$ have no common factors, then $\spoly(f,g) \ra_{\{f,g\}} [f,g]$.
\end{lemma}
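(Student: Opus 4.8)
The plan is to identify $\spoly(f,g)$ explicitly, rewrite it as $[f,g]$ plus a ``remainder'' assembled from the tails of $f$ and $g$, and then reduce that remainder to zero modulo $\{f,g\}$ in an essentially commutative way.

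First I would pass to the monic case: rescaling $f$ and $g$ by nonzero scalars does not affect coprimality of their leading monomials and multiplies both $\spoly(f,g)$ and $[f,g]$ by the same scalar, so assume $\lc(f)=\lc(g)=1$ and write $\lm(f)=x^\alpha$, $\lm(g)=x^\beta$, $f=x^\alpha+f'$, $g=x^\beta+g'$ with $\lm(f')\prec x^\alpha$, $\lm(g')\prec x^\beta$. ``No common factors'' means $\alpha_i\beta_i=0$ for all $i$, hence $\lcm(x^\alpha,x^\beta)=x^{\alpha+\beta}$. Since $A$ is of Lie type, all quasi-commutation scalars equal $1$, so for monomials $x^\gamma,x^\delta$ we have $x^\gamma x^\delta=x^{\gamma+\delta}+(\text{terms}\prec x^{\gamma+\delta})$; in particular $x^\alpha g$ and $x^\beta f$ share the leading term $x^{\alpha+\beta}$, so, choosing the sign to match the statement, $\spoly(f,g)=x^\alpha g-x^\beta f$ and $\lm(\spoly(f,g))\prec x^{\alpha+\beta}$. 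Expanding $fg=x^\alpha g+f'g$ and $gf=x^\beta f+g'f$ yields the key identity
\[
  \spoly(f,g)=(fg-f'g)-(gf-g'f)=[f,g]+(g'f-f'g),
\]
so it is enough to show that $h:=g'f-f'g$ reduces to $0$ modulo $\{f,g\}$.

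Write $g'=\sum_m c_m x^m$ and $f'=\sum_{m'}c'_{m'}x^{m'}$, sums over the finitely many monomials involved; then $h=\sum_m c_m\,x^m f-\sum_{m'}c'_{m'}\,x^{m'}g$. By Lie type each summand $x^m f$ has leading monomial $x^{m+\alpha}$ and each $x^{m'}g$ has leading monomial $x^{m'+\beta}$, all of them $\prec x^{\alpha+\beta}$. The crucial point is that these leading monomials are pairwise distinct. Within each group this is clear. Across the groups, if $m+\alpha=m'+\beta$ then, coordinate by coordinate and using $\alpha_i\beta_i=0$, the common value is $\ge\alpha_i+\beta_i$ in each coordinate ($=m_i=m'_i+\beta_i\ge\beta_i$ when $\alpha_i=0$, and $=m_i+\alpha_i\ge\alpha_i$ when $\beta_i=0$); hence $x^{\alpha+\beta}$ divides $x^{m+\alpha}$, so $x^{m+\alpha}\succeq x^{\alpha+\beta}$ — contradicting $x^{m+\alpha}=x^m x^\alpha\prec x^\beta x^\alpha=x^{\alpha+\beta}$, since $x^m\prec x^\beta$. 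Pairwise distinctness means the standard reduction of $h$ peels off one summand at a time in strictly decreasing order of leading monomial: the current leading monomial of $h$ is that of a unique summand, which (as $x^{m+\alpha}$ is divisible by $\lm(f)$, resp.\ $x^{m'+\beta}$ by $\lm(g)$) is cancelled in one step by $f$, resp.\ $g$, leaving precisely the sum of the remaining summands. After finitely many steps we reach $0$, so $h\ra_{\{f,g\}}0$ and therefore $\spoly(f,g)\ra_{\{f,g\}}[f,g]$.

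The hard part is the pairwise-distinctness of the summands' leading monomials: it is exactly what rules out cancellation between the ``$g'f$'' and ``$f'g$'' summands during reduction, and it is precisely where genuine coprimality of $\lm(f)$ and $\lm(g)$ is needed (not merely compatibility of the $\lcm$). I would also be careful that the Lie-type hypothesis is used twice — to bring $\spoly(f,g)$ into the form above, and to ensure that left-multiplying a polynomial by a monomial neither rescales its leading coefficient nor introduces new top terms — without which the leading-term bookkeeping for the summands would fail.
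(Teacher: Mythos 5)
Your route is the natural one (and essentially the one behind the cited source \cite{LS03}; note this paper only quotes the lemma, so there is no in-paper proof to compare against): normalize to monic, use coprimality to get $\lcm(\lm(f),\lm(g))=x^{\alpha+\beta}$, derive the identity $\spoly(f,g)=[f,g]+(g'f-f'g)$, and then reduce $h:=g'f-f'g$ away. Your intermediate steps are correct: in a $G$-algebra of Lie type $\lm(x^m f)=x^{m+\alpha}$ with leading coefficient $1$, and your divisibility argument does show that the leading monomials $x^{m+\alpha}$, $x^{m'+\beta}$ of the summands are pairwise distinct and $\prec x^{\alpha+\beta}$, so $h$ \emph{taken by itself} reduces to $0$ by peeling off its summands.

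The gap is the final inference ``$h\ra_{\{f,g\}}0$, therefore $\spoly(f,g)=[f,g]+h\ra_{\{f,g\}}[f,g]$''. A reduction step must cancel an entire term of the polynomial actually being reduced, and that polynomial is $[f,g]+h$, not $h$: the commutator may have nonzero coefficients precisely at the monomials $x^{m+\alpha}$, $x^{m'+\beta}$, so the coefficients to be cancelled there differ from those of your summands, and your decreasing-order peeling need not consist of legal reduction steps. Concretely, in the first Weyl algebra with $\partial x=x\partial+1$ and degrevlex with $\partial\succ x$, take $f=x^2$ and $g=\partial^2+x\partial+1$; then $\lm(f)=x^2$ and $\lm(g)=\partial^2$ are coprime, $h=(x\partial+1)x^2=x^3\partial+3x^2$, $[f,g]=-2x^2-4x\partial-2$, and $\spoly(f,g)=x^3\partial+x^2-4x\partial-2$. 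Your chain first subtracts $x\partial\cdot f=x^3\partial+2x^2$, leaving $-x^2-4x\partial-2$, whose $x^2$-coefficient is $-1$; subtracting $1\cdot f$ is now not a reduction step (it does not cancel that term), and the only legal reduction available, subtracting $-f$, leads to $-4x\partial-2\neq[f,g]$, after which no term is divisible by $\lm(f)$ or $\lm(g)$. (A chain reaching $[f,g]$ does exist here --- subtract $1\cdot f$ first, then $x\partial\cdot f$ --- but that is an accident of the example, not a consequence of your argument.) To close the proof you must control this interference of $[f,g]$ with the monomials $x^{m+\alpha},x^{m'+\beta}$, e.g.\ by arguing directly on $[f,g]+h$ and showing one can always pick a term divisible by $\lm(f)$ or $\lm(g)$ whose cancellation keeps the remainder of the form $[f,g]+(\tilde{g}f-\tilde{f}g)$ with smaller cofactors, or by proving the statement in the weaker (but for the applications in this paper sufficient) form that $\spoly(f,g)-[f,g]$ has the standard representation $g'f-f'g$ with $\lm(g'f),\lm(f'g)\prec x^{\alpha+\beta}$ --- which is what your computation actually establishes.
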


\section{Challenges for Gr\"obner bases engines of \textsc{Singular} }

Since the very beginning of implementation of algorithms for $D$-modules in \textsc{Singular} there have been intensive interaction with the developers of \textsc{Singular}. Numerous challenging examples and open problems from constructive $D$-module theory were approached both on the level of libraries and in the kernel of \textsc{Singular} and \textsc{Singular:Plural}. This resulted in several enhancements in kernel procedures and, among other, motivated M.~Brickenstein to develop and implement the generalization of his \texttt{slimgb} \cite{SlimBa06} (slim Gr\"obner basis) algorithm to non-commutative $G$-algebras. Indeed \texttt{slimgb} is a variant of Buchberger's algorithm. It is designed to keep polynomials \textit{slim}, that is short with small coefficients. The algorithm features parallel reductions and a strategy to minimize the weighted lengths of polynomials. A weighted length function of a polynomial can be seen as measure for the intermediate expression swell  and it can consider not only the number of terms in a polynomial, but also their coefficients and degrees. Considering the degrees of the terms inside the polynomials, \texttt{slimgb} can often directly (that is, without using Gr\"obner Walk or similar algorithms) compute Gr\"obner bases with respect to \eg elimination orderings. The procedure \texttt{slimgb} demonstrated very good performance on examples from the realm of $D$-modules \cite{LM08}, which require computations with elimination orderings. 

As it will be seen in the paper, various computational questions, arising in $D$-module theory, use much more than Gr\"obner bases only. Among other, a transformation matrix between two bases (called \textsc{Lift} in \cite{GPS08}), the kernel of a module homomorphism (called \textsc{Modulo} in \cite{GPS08}) and so on must be applied for complicated examples. On the other hand, the standard \texttt{std} routine for Gr\"obner bases, generalized to non-commutative $G$-algebras, is used together with \texttt{slimgb} for a variety of problems. Since the beginning of development of the $D$-module suite in \textsc{Singular}, these functions have been enhanced: they became much faster and more flexible. The effect of the use of the generalized Chain Criterion (cf. \cite{GPS08}) in Gr\"obner engines is even bigger in the non-commutative case, due to the discarding of multiplications, which complexity is increased, compared with the commutative case. On the contrary, the generalized Product Criterion (Lemma \ref{prodCrit}) plays a minor role in the implementation, since the complete discarding of a pair generalizes to the computation of a Lie bracket of the pair members. 

The concept of \textit{ring list}, introduced in \textsc{Singular} in 2004, enormously simplified the process of creation and modification of rings (like changing the monomial module ordering, regrouping of variables, modifying non-commutative relations, working with parameters of the ground field etc.). Especially in the $D$-module setting we modify rings often, create a new one from existing rings and equip a new ring with a new ordering. Thus, with ring lists the development of such procedures became much easier and the corresponding code became much more manageable.

We have to mention, that in the meantime the implementation of non-commutative multiplication in the kernel of \textsc{Singular:Plural} has been improved as well.

\section{$s$-parametric annihilator of $f$} \label{sAnn}

Recall Malgrange's construction for $f = f_1 \cdots f_p\in \K[x_1,\ldots,x_n]$. Consider the algebra $W_{p+n}$, being the $(p+n)$-th Weyl algebra
\[
\K\langle  \{t_j, {\partial t}_j \mid 1\leq j \leq p\} \mid\{  [{\partial t}_j, t_k]=\delta_{jk} \} \rangle \otimes_{\K} \K\langle \{x_i, \d_i \mid 1\leq i \leq n\} \mid\{ [\d_i, x_k]=\delta_{ik} \} \rangle.
\]
Moreover, consider the left ideal in $W_{p+n}$, called Malgrange ideal 
\[
I_f  := \langle \; \{ \; t_j - f_j, \sum^p_{j=1} \frac{\partial f_j}{\partial x_i} {\partial t}_j + \partial_i \;\mid 1\leq j\leq p, 1\leq i\leq n \}\; \rangle.
\]

Then for $s=(s_1,\ldots,s_p)$ we denote $f^s := f_1^{s_1} \cdots f_p^{s_p}$. Let us compute
\[
 I_f \cap \K[\{t_j \d t_j\}]\langle x_i, \d x_i \mid [\d_i, x_i]=1 \rangle \subset D_n[\{t_j \d t_j \}]
\]
and furthermore, replace $t_j \d t_j$ with $-s_j-1$. The result is known (\eg \cite{SST00}) to coincide with the $s$-parametric annihilator of $f^s$, $\Ann_{D_n[s]}(f^s) = \{ Q(x,\d, s) \in D_n[s] \mid Q\bullet f^s = 0\}$.

There exist several methods for the computation of $s$-parametric annihilator of $f^s$.

\subsection{Oaku and Takayama}

The algorithm by Oaku and Takayama \cite{Oaku97,SST00} was developed in a wider context and uses homogenization. Consider the $\K$-algebras $T:=\K[t_1,\ldots,t_p]$, $D'_p:= D(T)$ and $H := D_n \otimes _\K D'_p \otimes _\K \K[u_1,\ldots,u_p,v_1,\ldots,v_p]$. Moreover, let $I$ below be the $(u,v)$-homogenized Malgrange ideal, that is the left ideal in $H$
\[
I = \left\langle  \{t_j - u_j f_j, \sum^p_{k=1} \frac{\d f_k}{\d x_i} u_k {\d t}_j + \partial_i, u_j v_j -1 \} \right\rangle.
\]
Oaku and Takayama proved, that $\ann_{D_n[s]}(f^s)$ can be obtained in two steps. At first $\{u_j, v_j\}$ are eliminated from $I$ with the help of Gr\"obner bases, thus yielding $I' = I \cap (D_n \otimes _\K D'_p)$. Then, one calculates $I' \cap (D_n \otimes _\K \K[\{-t_j \d t_j -1 \}])$ and substitutes every appearance of $t_j \d t_j$ by $-s_j-1$ in the latter. The result is then $\ann_{D_n[s]}(f^s)$.

\subsection{Brian\c{c}on and Maisonobe}
\label{BM}

Consider $S_p = \K\langle \{ \d t_j, s_j \} \mid \d t_j s_k = s_k \d t_j - \delta_{jk} \d t_j  \rangle$ (the $p$-th shift algebra) and $S' = D_n \otimes_{\K} S_p$. Moreover, consider the following left ideal in $S'$:
\[
I = \left\langle \{s_j +  f_j \d t_j, \sum^p_{k=1} \frac{\partial f_k}{\partial x_i} {\partial t}_k + \partial_i \} \right\rangle.
\]
Brian\c{c}on and Maisonobe proved \cite{BM02} that $\ann_{D_n[s]}(f^s) = I\cap D_n[s_1,\ldots,s_p]$ and hence the latter can be computed via the left Gr\"obner basis with respect to an elimination ordering for $\{\d t_j\}$.

\subsection{Another alternative proof of Brian\c{c}on-Maisonobe's method}\label{newBM}

Here we give yet another \cite{ALM09} computer algebraic proof for the method by Brian\c{c}on and Maisonobe.

Throughout this section, we assume $1\leq i \leq n$ and $1\leq j \leq p$. Define 
\[
E := \K\langle \{t_j, \d t_j, x_i, \d_i, s_j \}\mid \{[\d_i, x_i]=1,  [\d t_j, t_j]=1, 
[t_k, s_j] = \delta_{jk} t_j, [\d t_k, s_j] = -\delta_{jk} \d t_j\} \rangle.
\]
Let $B = D_n[s]$ be a subalgebra of $E$, generated by $\{ x_i, \d_i, s_j \}$. Then the \BM method requires to prove \cite{ALM09}, that
\[
\langle \{t_j - f_j, \sum^p_{j=1} \frac{\partial f_j}{\partial x_i} {\partial t}_j + \partial_i,  f_j \partial t_j + s_j \} \rangle \cap D_n[s] = \ann_{D_n[s]}(f^s).
\]

\begin{theorem}
Let us define the following polynomials and sets:
\[
g_i :=\partial_i + \sum^p_{k=1} \frac{\partial f_k}{\partial x_i} {\partial t}_k, \
G = \{g_i\}, T = \{t_j - f_j\}, S = \{s_j +  f_j \d t_j \}.
\] 
Let $\Lambda$ be a (possibly empty) subset of $\{1,\ldots,p\}$. Define $M_\Lambda:=G \cup \{ t_k - f_k \mid k \in \Lambda\} \cup \{s_j +  f_j \d t_j \mid j \in \{1,\ldots,p\} \setminus\Lambda \}$.
\begin{enumerate} [label=(\alph*)]
\item \label{thm.a} For any $\Lambda$, the elements of $M_\Lambda$ commute pairwise. In particular, so do $G \cup T$ and~$G \cup S$.
\item \label{thm.b} Consider an ordering $\prec$, satisfying $\{t_j \}  \gg  \{ x_i \} $, $\{\d_i, s_j \}  \gg  \{ x_i, \d t_j \}$.  Then any subset of $G \cup T \cup S$ is a left Gr\"obner basis with respect to $\prec$. In particular, so is the set $M_\Lambda$ for any $\Lambda$.
\item \label{thm.c} The elements of $M_\Lambda$ are algebraically independent.
\item \label{thm.d}
For any $\Lambda$, the Krull (and hence the Gel'fand-Kirillov) dimension of $\K[M_\Lambda]$ is $n+p$. 
\item \label{thm.e} For any $\Lambda$, $\K[M_\Lambda]$ is a maximal commutative subalgebra of $E$.
\end{enumerate}
\end{theorem}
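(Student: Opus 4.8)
The plan is to prove the five assertions in sequence, since once the leading monomials of the elements of $G$, $T$ and $S$ are pinned down parts~\ref{thm.a}--\ref{thm.d} are largely bookkeeping, and the real work is concentrated in~\ref{thm.e}. For~\ref{thm.a} I would compute every pairwise commutator of members of $M_\Lambda$ directly from the defining relations of $E$, using repeatedly that each $f_j\in R$ commutes with all $t_k$, $\partial t_k$ and $s_k$ and that $[\partial_i,f_j]=\partial f_j/\partial x_i$. One obtains $[g_i,g_{i'}]=0$, $[g_i,t_k-f_k]=0$ and $[g_i,s_j+f_j\partial t_j]=0$ (the two derivative contributions cancelling in each case), $[t_k-f_k,t_{k'}-f_{k'}]=0$, $[s_j+f_j\partial t_j,s_{j'}+f_{j'}\partial t_{j'}]=0$, and the single non-trivial identity $[t_k-f_k,\,s_j+f_j\partial t_j]=\delta_{kj}(t_j-f_j)$. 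That last relation is the only obstruction to commutativity, and the definition of $M_\Lambda$ is arranged precisely to suppress it: no index $j$ contributes both $t_j-f_j$ and $s_j+f_j\partial t_j$ to $M_\Lambda$, so $\delta_{kj}=0$ whenever both members of a pair lie in $M_\Lambda$. The same computation is reused for~\ref{thm.b}: under any ordering with $\{t_j\}\gg\{x_i\}$ and $\{\partial_i,s_j\}\gg\{x_i,\partial t_j\}$ one reads off $\lm(g_i)=\partial_i$, $\lm(t_j-f_j)=t_j$ and $\lm(s_j+f_j\partial t_j)=s_j$, which are pairwise distinct variables and hence pairwise coprime. The Generalized Product Criterion (Lemma~\ref{prodCrit}) then reduces each $S$-polynomial within $G\cup T\cup S$ to the corresponding commutator, which by~\ref{thm.a} is either $0$ or $t_j-f_j$; the latter reduces to $0$ against itself, and this case only occurs when both $t_j-f_j$ and $s_j+f_j\partial t_j$ --- in particular $t_j-f_j$ --- already belong to the subset under consideration. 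Buchberger's criterion then shows every subset of $G\cup T\cup S$, and so $M_\Lambda$, is a left Gr\"obner basis with respect to $\prec$.

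For~\ref{thm.c} I would argue by leading terms: since $\lm(ab)=\lm(a)\lm(b)$ in the $G$-algebra $E$, for a non-zero commutative polynomial $P$ the evaluation of $P$ at the $n+p$ pairwise-commuting elements of $M_\Lambda$ has leading monomial the $\prec$-largest of the monomials $\prod_h\lm(h)^{\alpha_h}$ occurring with non-zero coefficient; as $\lm$ sends the $n+p$ generators to distinct variables these monomials are pairwise distinct, so no cancellation can occur and the evaluation is non-zero. Thus $M_\Lambda$ is algebraically independent and $\K[M_\Lambda]$ is a polynomial ring in $n+p$ variables, whence~\ref{thm.d} is immediate: a polynomial ring in $n+p$ variables over $\K$ has Krull dimension $n+p$, and for a finitely generated commutative $\K$-algebra the Gel'fand-Kirillov dimension coincides with the Krull dimension.

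The substantive part is~\ref{thm.e}, for which my plan is to compute the centralizer of $\K[M_\Lambda]$ outright, the enabling device being an automorphism of $E$. Define $\theta$ on generators by $x_i\mapsto x_i$, $\partial t_j\mapsto\partial t_j$, $\partial_i\mapsto g_i$, $t_j\mapsto t_j-f_j$ and $s_j\mapsto s_j+f_j\partial t_j$; using~\ref{thm.a} one verifies that $\theta$ respects the defining relations of $E$, and it is invertible, the inverse sending $\partial_i\mapsto\partial_i-\sum_k\tfrac{\partial f_k}{\partial x_i}\partial t_k$, $t_j\mapsto t_j+f_j$, $s_j\mapsto s_j-f_j\partial t_j$ and fixing $x_i,\partial t_j$. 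Since $\theta$ carries the ``model set'' $\{\partial_i\}\cup\{t_k\mid k\in\Lambda\}\cup\{s_j\mid j\notin\Lambda\}$ bijectively onto $M_\Lambda$, it suffices to treat the model set, for which I would use that the subalgebra of $E$ generated by $G\cup T\cup S$ --- carried by $\theta$ to the subalgebra generated by $\{\partial_i\}\cup\{t_j\}\cup\{s_j\}$ --- decomposes as the tensor product of the commutative polynomial ring $\K[\partial_1,\dots,\partial_n]$ with $p$ mutually commuting copies of the enveloping algebra $\K\langle t_j,s_j\mid[t_j,s_j]=t_j\rangle$ of the two-dimensional non-abelian Lie algebra. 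Via the fact that the centralizer of a tensor product of subalgebras is the tensor product of the centralizers, the claim reduces to the single local statement that each of $\K[t_j]$ and $\K[s_j]$ equals its own centralizer in $\K\langle t_j,s_j\mid[t_j,s_j]=t_j\rangle$; this follows by expanding an arbitrary centralizing element in the PBW basis $\{t_j^a s_j^b\}$ and matching coefficients, using that this algebra has trivial centre. I expect the main obstacle to be not any single deep step but organizing this centralizer bookkeeping cleanly; in particular one must keep in mind that $E$ itself has the central elements $s_j+t_j\partial t_j$, which are not in $\K[M_\Lambda]$, so the maximality in~\ref{thm.e} is most naturally established inside the subalgebra generated by $G\cup T\cup S$, where the relevant tensor factors have trivial centre.
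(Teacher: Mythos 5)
Your treatment of (a), (b) and (d) coincides with the paper's: direct commutator computations, the Generalized Product Criterion applied to the pairwise coprime leading monomials $\partial_i$, $t_j$, $s_j$ (with the single nonzero commutator $\delta_{jk}(t_k-f_k)$ reducing to zero against $t_k-f_k$, which is present whenever that pair occurs), and the standard facts about Krull and Gel'fand--Kirillov dimension of a polynomial ring. For (c) you take a genuinely different route: the paper invokes the Commutative Preimage Theorem of \cite{Lev05} and runs a second Gr\"obner basis computation for $\langle \{h_i-c_i\}\rangle$ to see that the elimination ideal in $\K[c]$ vanishes, whereas you argue directly from multiplicativity of leading monomials in a $G$-algebra: since the $\lm(h_i)$ are distinct variables, distinct exponent vectors give distinct monomials $\prod_i\lm(h_i)^{\alpha_i}$, so the top term of a nonzero polynomial evaluated on $M_\Lambda$ cannot cancel. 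Both arguments are correct; yours is more self-contained, the paper's fits its elimination machinery.

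For (e) the comparison is more delicate. Your reduction via the automorphism $\theta$ of $E$ (fixing $x_i,\partial t_j$ and sending $\partial_i\mapsto g_i$, $t_j\mapsto t_j-f_j$, $s_j\mapsto s_j+f_j\partial t_j$) is sound, and your centralizer computations in $\K\langle t_j,s_j\mid [t_j,s_j]=t_j\rangle$ are correct. But, as you concede yourself, the tensor-product argument only gives maximality of $\K[M_\Lambda]$ inside the subalgebra generated by $G\cup T\cup S$, while (e) asserts maximality in all of $E$; relative to the theorem as printed, your proof stops short. The twist is that your side remark is right and cuts deeper than you let on: $z_j:=s_j+t_j\partial t_j$ is central in $E$ (indeed $E\cong W_{n+p}\otimes_\K\K[z_1,\ldots,z_p]$, so the center of $E$ is $\K[z_1,\ldots,z_p]$, not $\K$), it is fixed by $\theta$, and its PBW expansion contains the monomial $t_j\partial t_j$, so $z_j\notin\K[M_\Lambda]$; hence $\K[M_\Lambda][z_1,\ldots,z_p]$ is a strictly larger commutative subalgebra and (e) cannot hold verbatim. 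The paper's own proof of (e) rests precisely on the assertion that the center of $E$ is $\K$, which conflicts with this. So the gap you should name is in the statement (and the paper's argument), not in your centralizer bookkeeping: the honest output of your approach is the corrected claim that $\K[M_\Lambda]$ is maximal commutative in $\theta\bigl(\K\langle \partial_i,t_j,s_j\rangle\bigr)$, equivalently that $\K[M_\Lambda\cup\{z_1,\ldots,z_p\}]$ is maximal commutative in $E$ (or that maximality holds after passing to the quotient identifying $s_j$ with $-\partial t_j t_j$), and you should state and prove that version explicitly rather than leave it as an aside.
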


\begin{proof}
\ref{thm.a} Computing commutators between elements, we obtain 
\begin{gather*}
[g_i,g_k] = {\d t}_j \sum_j [\d_i, \frac{\d f_j}{\d x_k} ] +  {\d t}_j \sum_j [\frac{\d f_j}{\d x_i}, \d_k] 
 = {\d t}_j \sum_j ([\d_i, \frac{\d f_j}{\d x_k} ] - [\d_k, \frac{\d f_j}{\d x_i} ]) = 0,\\
[t_k - f_k,g_i] = \sum_j \frac{\d f_j}{\d x_i} [t_k,\d t_j] - [f_k, \d_i]=0, \qquad
[t_i - f_i,t_k - f_k] = 0,\\
[s_i +  f_i \d t_i , s_j +  f_j \d t_j] = f_j [s_i, \d t_j] - f_i [s_j, \d t_i] = 0, \qquad
[s_j + f_j \d t_j,g_i] = [s_j, \d_i] +  \\ + \d t_j [f_j, \d_i] + \sum^p_{k=1} \frac{\d f_k}{\d x_i} [s_j, {\d t}_k] + [f_j \d t_j, \sum^p_{k=1} \frac{\d f_k}{\d x_i} {\d t}_k] = \frac{\d f_j}{\d x_i} {\d t}_j - [\d_i, f_j] \d t_j  = 0.\\
\intertext{The only nonzero commutator arises from }
[t_k - f_k, s_j +  f_j \d t_j] =  [t_k,s_j] + f_j [t_k, \d t_j] - [f_k,s_j] -[f_k, f_j \d t_j] = \delta_{jk} (t_k -f_k).
\end{gather*}
However, according to the definition, only one of these elements belongs to $M_\Lambda$ for any $\Lambda$.

\ref{thm.b} We run Buchberger's algorithm by hands. Due to the ordering property, for each pair the generalized Product Criterion is applicable. Hence using \ref{thm.a} we see, that most $s$-polynomials reduce to commutators, which are zero except $\spoly(t_k - f_k, s_j +  f_j \d t_j) = \delta_{jk} (t_k -f_k)$, which reduces to zero modulo the first polynomial. Thus, any subset including $M_\Lambda$ is indeed a Gr\"obner basis.

\ref{thm.c} Using pairwise commutativity, we employ the Commutative Preimage Theorem from \cite{Lev05}. It states, that the ideal of algebraic dependencies between pairwise commuting elements $\{h_k\mid 1\leq k \leq m\}\subset E$ can be computed as
\[
E \otimes_{\K} \K[c_1,\ldots,c_m] \supset \langle \{ h_i - c_i \} \rangle \cap \K[c_1,\ldots,c_m],
\]
where $c_i$ are new commutative variables, adjoint to $E$. In this elimination problem one requires an ordering on $E \otimes_{\K} \K[c]$, preferring variables of $E$ to $c_i$'s. For such an ordering, one needs to compute a Gr\"obner basis. Now, take $\{h_i\}:=M_\Lambda$, $1\leq i \leq p+n$, and run Buchberger's algorithm with respect to the same ordering as in \ref{thm.b}. Thus we are again in the situation, where the Product Criterion applies, hence $[h_i - c_i, h_k - c_k] =  0$ since $[h_i, h_k]=0$ by \ref{thm.b} and $c_i$ are central. Hence, $\{ h_i - c_i \}$ is a left Gr\"obner basis and by the elimination property $\langle \{ h_i - c_i \} \rangle \cap \K[c_1,\ldots,c_m] = 0$, that is $\{h_i\}$ are algebraically independent.

\ref{thm.d} By \ref{thm.c}, $M_\Lambda$ generates a commutative ring with no algebraic dependence between its elements, so the Krull dimension is the cardinality of $M_\Lambda$, that is $n+p$. Since $M_\Lambda$ is isomorphic to a commutative polynomial ring by \ref{thm.c}, its Gel'fand-Kirillov dimension over the field $\K$ is $n+p$ as well.

\ref{thm.e} With respect to the ordering from \ref{thm.b}, the leading monomials of the generators are $\{\d_1,\ldots,\d_n\} \cup \{ t_k \mid k\in\Lambda \} \cup \{s_j \mid j\not\in\Lambda \}$. Assume, that there exists an element in $E\setminus \K[M_\Lambda]$, which commutes with all elements in $M_\Lambda$. Then its leading monomial must belong to the subalgebra $F$, generated by $\{x_1,\ldots,x_n\} \cup \{ s_k \mid k\in\Lambda \} \cup \{t_j \mid j\not\in\Lambda \} \cup \{\d t_1,\ldots,\d t_p\}$. Since the center of $E$ is $\K$, we consider centralizers of elements. Taking $F'= \cap_{k\in\Lambda } C(t_k-f_k) \cap F$, we see that an element from it can have no $\{\d t_k, s_k \mid k\in\Lambda \}$. Considering $F''=\cap_i C(g_i) \cap F' $, we exclude $\{x_1,\ldots,x_n\}$. Thus we are left with the subalgebra, generated by $\tilde{F} = \{\d t_j, s_j \mid j\not\in\Lambda \}$. But no element of it can commute with $\{ s_j + f_j \d t_j \mid j\not\in\Lambda \}$ except constants. Hence the claim.
\end{proof}

We want to eliminate both $\{t_j\}$ and $\{\d t_j\}$ from an ideal, generated by $G \cup S \cup T$. By using an elimination ordering for $\{t_j\}$ we proved in \ref{thm.b} above, that $G \cup S \cup T$ is a Gr\"obner basis. Hence, the elimination ideal is generated by $G \cup S$ and we can proceed with eliminating $\{\d t_j\}$ from $G \cup S$, which is exactly the statement of \BM in Section \ref{BM}.

\subsection{Implementation}

We use the following acronyms in addressing functions in the implementation: \textit{OT} for Oaku and Takayama, \textit{LOT} for Levandovskyy's modification of Oaku and Takayama \cite{LM08} and \textit{BM} for \BM. Moreover, it is possible to specify the desired Gr\"obner basis engine (\texttt{std} or \texttt{slimgb}) via an optional argument. 

For the classical situation $f = f_1$, the procedure \texttt{Sannfs($f$)} computing $\Ann_{D_n[s]}(f^s) \subset D_n[s]$ uses a \doublequote{minimal user knowledge} principle and chooses one of three mentioned algorithms. Alternatively, one can call the corresponding procedures \texttt{SannfsOT, SannfsLOT, SannfsBM} directly.

For the annihilator of $f = f_1 \cdots f_p$, see Section \ref{BMI}.

\begin{example}\label{exSannfs}
We demonstrate, how to compute the $s$-parametric annihilator with \texttt{Sannfs}. This procedure takes a polynomial in a commutative ring as its argument and returns back a Weyl algebra of the type \texttt{ring} containing an object of the type \texttt{ideal} called \texttt{LD}. Note, that the latter ideal is a set of generators and not a Gr\"obner basis in general.

\begin{footnotesize}\begin{verbatim}
LIB "dmod.lib";
ring r = 0,(x,y),dp;          // set up the commutative ring
poly f = x^3 + y^2 + x*y^2;   // define the polynomial 
def D = Sannfs(f); setring D; // call Sannfs and change to ring D
LD = groebner(LD); LD;        // compute and print Groebner basis
==> LD[1]=2*x*y*Dx-3*x^2*Dy-y^2*Dy+2*y*Dx
==> LD[2]=2*x^2*Dx+2*x*y*Dy+2*x*Dx+3*y*Dy-6*x*s-6*s
==> LD[3]=x^2*y*Dy+y^3*Dy-2*x^2*Dx-3*x*y*Dy-2*y^2*s+6*x*s
==> LD[4]=x^3*Dy+x*y^2*Dy+y^2*Dy-2*x*y*s-2*y*s
==> LD[5]=2*y^3*Dx*Dy+3*x^3*Dy^2+x*y^2*Dy^2-4*x^2*Dx^2-8*x*y*Dx*Dy-2*x^2*Dx
 -4*y^2*Dx*s+6*x*y*Dy+12*x*Dx*s-10*x*Dx-6*y*Dy+12*s
\end{verbatim}\end{footnotesize}
\end{example}

\section{Annihilators of polynomial and rational functions}\label{anniA}

\subsection{Annihilator of $f^{\alpha}$ for $\alpha \in \C$}\label{annfA}

It is known (\eg \cite{SST00}) that for any $\alpha \in \C$, $D_n/\ann_{D_n}(f^{\alpha})$ is a holonomic $D$-module. In the procedure \texttt{annfspecial} from \texttt{dmod.lib} we follow Algorithm 5.3.15 in \cite{SST00}. Given $f$ and $\alpha$, we compute $\Ann_{D_n[s]}(f^s) \subset D_n[s]$, the \BS polynomial of $f$ (cf. Section \ref{globalBS}) and its minimal integer root $s_0$. Then, if $\alpha- (s_0 + 1) \in \N$, according to Algorithm 5.3.15 in \cite{SST00} we have to compute a certain syzygy module in advance. 
Otherwise, $\ann_{D_n}(f^{\alpha}) = \Ann_{D_n[s]}(f^s) \mid_{s=\alpha}$ is obtained via substitution.

\begin{example}\label{exAnnSpecial}
In this example we show, how one computes the annihilator of $2xy$.

\begin{footnotesize}\begin{verbatim}
LIB "dmod.lib"; option(redSB); option(redTail); 
ring r = 0,(x,y),dp; poly g = 2*x*y;  
def A = Sannfs(g); setring A;        // compute Ann(g^s)
LD = groebner(LD); LD;               // GB of the ideal Ann(g^s)
==> LD[1]=y*Dy-s
==> LD[2]=x*Dx-s
def B = annfs0(LD,2*x*y); setring B; // compute BS polynomial
BS; // the list of roots and multiplicities of BS polynomial
==> [1]:
==>    _[1]=-1
==> [2]:
==>    2
// so, the minimal integer root is -1
setring A;                           // need to work with Ann(g^s) again
ideal I = annfspecial(LD,2*x*y,-1,1); 
                       // the last argument 1 indicates that we want to compute f^1
print(matrix(I));                    // condensed presentation
==> Dy^2, y*Dy-1, Dx^2, x*Dx-1
\end{verbatim}\end{footnotesize}
\end{example}

\subsection{Alternative for an annihilator of $f^m$}

Computing a syzygy module in the previous algorithm can be expensive. Therefore we note, that for $\alpha=m \in \N$ we better use an easier approach. 

\begin{lemma}\label{annPoly}
Let $g\in \K[x_1,\ldots,x_n]$. Consider the homomorphism of left $D_n$-modules $\psi: D_n \rightarrow D_n/\langle \d_1,\ldots,\d_n\rangle$, $\psi(1) = g$. Then $\ann_{D_n}(g) = \ker \psi$.
\end{lemma}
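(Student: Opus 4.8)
The plan is to identify $\ker\psi$ with $\ann_{D_n}(g)$ directly from the definition of $\psi$. First I would spell out what $\psi$ does on a general operator: for $P\in D_n$, since $\psi$ is a left $D_n$-module homomorphism with $\psi(1)$ equal to the class of $g$ in $D_n/\langle\d_1,\ldots,\d_n\rangle$, we have $\psi(P) = P\bullet \overline{g}$, where $\overline{g}$ denotes the residue class. So $P\in\ker\psi$ if and only if $P\cdot g\in\langle\d_1,\ldots,\d_n\rangle$, where on the left $P\cdot g$ means the product in $D_n$ (i.e. $P$ applied to the constant-coefficient-free representative $g$, viewed again as an operator).

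The key observation is the canonical $\K$-linear splitting $D_n = \K[x_1,\ldots,x_n]\oplus \langle\d_1,\ldots,\d_n\rangle$ as left $R$-modules: every operator can be written uniquely by moving all $\d_i$'s to the right via the commutation relations, and the summand $\langle\d_1,\ldots,\d_n\rangle$ consists precisely of those operators all of whose normal-form terms contain at least one $\d_i$. Under this splitting, the projection $D_n\to R$ sends an operator $Q$ to $Q\bullet 1$, its action on the constant polynomial $1$. Hence the class $\overline{Q}$ is zero in $D_n/\langle\d_1,\ldots,\d_n\rangle$ exactly when $Q\bullet 1 = 0$. Applying this with $Q = P\cdot g$, and using that the Weyl-algebra product followed by evaluation at $1$ is the same as the module action, namely $(P\cdot g)\bullet 1 = P\bullet(g\bullet 1) = P\bullet g$, we conclude $\psi(P)=0 \iff P\bullet g = 0 \iff P\in\ann_{D_n}(g)$.

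The step I expect to require the most care is the identification of the projection $D_n\to D_n/\langle\d_1,\ldots,\d_n\rangle$ with the evaluation map $Q\mapsto Q\bullet 1$: one must check that $\langle\d_1,\ldots,\d_n\rangle$ (as a \emph{left} ideal) is exactly the kernel of $Q\mapsto Q\bullet 1$. The inclusion $\langle\d_1,\ldots,\d_n\rangle\subseteq\ker(-\bullet 1)$ is clear since $\d_i\bullet 1 = 0$. For the reverse inclusion I would use the PBW basis $x^\alpha\d^\beta$ of $D_n$: writing $Q=\sum_{\alpha,\beta}c_{\alpha\beta}x^\alpha\d^\beta$, the terms with $\beta=0$ form $Q\bullet 1$ as a polynomial, while every term with $\beta\neq 0$ visibly lies in $\langle\d_1,\ldots,\d_n\rangle$ (factor out a $\d_i$ on the right, then note $x^\alpha\d^{\beta'}\d_i$ is a left multiple of... no — rather, observe $x^\alpha\d^\beta\in\langle\d_1,\ldots,\d_n\rangle$ because one can move a $\d_i$ to the left modulo lower-order terms, or more cleanly: $\langle\d_1,\ldots,\d_n\rangle$ is a two-sided ideal, being the kernel of the algebra map $D_n\to R$ sending $\d_i\mapsto 0$, $x_i\mapsto x_i$ — no, that is not an algebra map). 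The safe route is: $D_n/\langle\d_1,\ldots,\d_n\rangle$ is, as a left $D_n$-module, generated by $\overline 1$, and as a $\K$-vector space it is spanned by $\{\overline{x^\alpha}\}$ since any $\d_i$ appearing on the right of a monomial kills what is to its right once we reduce; counting dimensions against the polynomial representatives $g\bullet 1$ then forces $\ker(-\bullet 1)=\langle\d_1,\ldots,\d_n\rangle$. Once this normal-form fact is in hand, the rest of the argument is the two-line computation above.
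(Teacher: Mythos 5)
Your proposal is correct and follows essentially the same route as the paper, which simply cites the left $D_n$-module isomorphism $\K[x_1,\ldots,x_n]\cong D_n/\langle \d_1,\ldots,\d_n\rangle$ and then makes the same two-line identification $\ann_{D_n}(g)=\{a\in D_n \mid ag\in\langle \d_1,\ldots,\d_n\rangle\}=\ker\psi$; you merely fill in the standard PBW argument for that isomorphism, which the paper takes as known. Incidentally, your first instinct in the hesitant parenthesis was right and the detour unnecessary: for $\beta\neq 0$ pick $i$ with $\beta_i\geq 1$ and write $x^\alpha\d^\beta=(x^\alpha\d^{\beta-e_i})\,\d_i\in D_n\d_i$, a left multiple of $\d_i$, so membership in the left ideal is immediate and neither two-sidedness nor a dimension count is needed.
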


\begin{proof}
Note, that $\K[x_1,\ldots,x_n] \cong D_n/\langle \d_1,\ldots,\d_n\rangle$ as left $D_n$-modules. Hence we can view $g$ as the image of $1$ under $\psi$. Then $\ann_{D_n}(g) = \{a\in D_n \mid a\bullet g = 0\} = \{a\in D_n \mid ag \in \langle \d_1,\ldots,\d_n\rangle \} = \ker \psi$. 
\end{proof}

\begin{remark}
Hence, given any element $f\in\K[x_1,\ldots,x_n]$, $\ann_{D_n}(f)$ can be computed via the kernel of a module homomorphism (algorithm \texttt{Modulo}) which amounts to just one Gr\"obner basis computation. Moreover, it does not use elimination and hence is clearly more efficient in the special case $g=f^n$ for $f\in R, n\in\N$, than the more general method in Section \ref{annfA}. Notably this method can be generalized to various other operator algebras, see \cite{SLZ09} for details. The corresponding procedure in \texttt{dmodapp.lib} is called \texttt{annPoly}.
\end{remark}

\begin{remark}\label{annPolyCheckRoot}
Yet another improvement can be achieved in the computation of the minimal integer root of the \BS polynomial with the algorithms from Theorem \ref{checkRoot} below. Namely, since we know, that for an integer root, say $\alpha$, of the \BS polynomial of a polynomial in $n \geq 2$ variables  $-n+1 \leq \alpha \leq -1$ holds (by \cite{Saito94, Varcenko81}) and $-1$ is always a root, we can run the \texttt{checkRoot} procedure (which is just one Gr\"obner basis computation with an arbitrary ordering, see Section \ref{checkrootA}) starting from $\alpha = -n+1$ to $\alpha=-2$. We stop at the first affirmative answer from \texttt{checkRoot} or output $-1$ if no positive answer appears. Thus, one executes \texttt{checkRoot} at most $n-2$ times.
\end{remark}

\begin{algorithm}[Heuristic for $\Ann_{D_n}(f^{\alpha})$]\label{heurAnnFa}~
\begin{algorithmic}
\REQUIRE $f\in\C[x_1,\ldots,x_n]$, $\alpha\in\C$
\ENSURE $\Ann_{D_n}(f^{\alpha})$
\STATE \textbf{if} $\alpha \in \C \setminus (\Z \cap [-n+1,-1])$ \textbf{then}
\STATE \qquad
$\Ann_{D_n} (f^{\alpha}) = 
\begin{cases}
\langle \d_1,\ldots,\d_n\rangle & \text{ if } \alpha=0, \\
\ker(D_n \overset{1\mapsto f^{m}}{\longrightarrow} D_n/\langle \d_1,\ldots,\d_n\rangle) & \text{ if } \alpha=m\in\N, \quad \text{(cf. \ \ref{annPoly})},\\
\Ann_{D_n[s]}(f^{s}) \mid_{s=\alpha} & \text{ if } \alpha\in (\C\setminus \Z)  \cup (\Z\cap (-\infty,-n]),
\end{cases}$\\
\textbf{else} (that is $\alpha \in \Z \cap [-n+1,-1]$)
\STATE \qquad $\mu := \min\{\beta \in \Z_{<0} \mid b_f(\beta)=0\}$
\STATE \qquad
$\Ann_{D_n} (f^{\alpha}) = 
\begin{cases}
\text{Procedure \ref{annfA} with \ref{annPolyCheckRoot}} & \text{ if } 
\mu +1 \leq \alpha \leq -1, \\
\text{Procedure \ref{annPolyCheckRoot} and} \Ann_{D_n[s]} (f^{s}) \mid_{s=\alpha} & 
\text{ if } -n+1 \leq \alpha \leq \mu. \\
\end{cases}$
\STATE \textbf{end if}
\RETURN $\Ann_{D_n} (f^{\alpha})$
\end{algorithmic}
\end{algorithm}

\subsection{Annihilator of a rational function}

In order to compute the annihilator $I$ of a rational function $\frac{f}{g}$ (it is known that $D_n/I$ is holonomic) we use the following lemma.

\begin{lemma}\label{annRat}
Let $f, g\in \K[x_1,\ldots,x_n]\setminus\{0\}$. Consider the homomorphism of left $D_n$-modules $\tau: D_n \rightarrow D_n/\ann_{D_n}(g^{-1}), q \mapsto qf$. Then $\ann_{D_n}(\frac{f}{g}) = \ker\tau$.
\end{lemma}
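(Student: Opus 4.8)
The plan is to mimic the proof of Lemma \ref{annPoly}, which handled the polynomial case $g \in R$, now adapting it to the rational function $f/g$. The key observation is that $D_n/\ann_{D_n}(g^{-1})$ is the cyclic $D_n$-module generated by $g^{-1}$, i.e.\ $D_n \bullet g^{-1} \cong D_n/\ann_{D_n}(g^{-1})$ via $1 \mapsto g^{-1}$. Under this isomorphism, the map $\tau\colon D_n \to D_n/\ann_{D_n}(g^{-1})$, $q \mapsto qf$, corresponds to the $D_n$-linear map $D_n \to D_n \bullet g^{-1}$ sending $1 \mapsto f \bullet g^{-1} = f/g$ (using that $f \in R$ acts by multiplication). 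Hence $\tau$ is, up to the isomorphism, precisely the map whose image is the cyclic submodule generated by $f/g$ inside the localization, and its kernel is by definition the annihilator of $f/g$.

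First I would record the isomorphism $D_n/\ann_{D_n}(g^{-1}) \cong D_n \bullet g^{-1} \subseteq R_g$ (the localization of $R$ at $g$, which is a $D_n$-module), sending the class of $1$ to $g^{-1}$; this is the standard first-isomorphism-theorem statement for the cyclic module generated by $g^{-1}$. Next I would compute $\ker\tau$ directly: for $q \in D_n$ we have $\tau(q) = 0$ in $D_n/\ann_{D_n}(g^{-1})$ iff $qf \in \ann_{D_n}(g^{-1})$ iff $(qf)\bullet g^{-1} = 0$ in $R_g$. Since $f$ acts as multiplication by the polynomial $f$, we have $(qf)\bullet g^{-1} = q \bullet (f \cdot g^{-1}) = q \bullet (f/g)$, so $\tau(q) = 0$ iff $q \bullet (f/g) = 0$, i.e.\ iff $q \in \ann_{D_n}(f/g)$. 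This chain of equivalences is the whole proof.

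The one point that deserves care is justifying that $f \bullet g^{-1} = f \cdot g^{-1}$, i.e.\ that the element $f \in R \subset D_n$ acts on the $D_n$-module $D_n \bullet g^{-1} \subseteq R_g$ simply by multiplication by $f$; this is immediate from the definition of the $D_n$-action on $R_g$ but should be stated. The genuinely substantive input — that $D_n/\ann_{D_n}(f/g)$ is holonomic, hence that $\ann_{D_n}(g^{-1})$ and $\ann_{D_n}(f/g)$ are nonzero and the construction is algorithmically meaningful — is cited and not needed for the equality itself. So I do not expect a real obstacle here; the proof is a short bookkeeping argument parallel to Lemma \ref{annPoly}, the only subtlety being to keep track of which module $g^{-1}$ lives in and to use that multiplication by $f \in R$ commutes with passing to the class modulo $\ann_{D_n}(g^{-1})$.
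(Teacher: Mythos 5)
Your argument is correct and is essentially the paper's own proof: both reduce to the chain $\tau(q)=0 \Leftrightarrow qf \in \ann_{D_n}(g^{-1}) \Leftrightarrow (qf)\bullet g^{-1} = q\bullet(fg^{-1}) = 0 \Leftrightarrow q \in \ann_{D_n}(f/g)$, using that $f$ acts on the localization by multiplication. Your extra remarks on the isomorphism with the cyclic submodule of $R_g$ just make explicit what the paper leaves implicit.
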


\begin{proof}
For $q\in\ker\tau=\{ q\in D_n\mid qf \in \ann_{D_n}(g^{-1})\}$, $(qf) \bullet g^{-1} = q\bullet (fg^{-1})$, hence $\ann_{D_n}(\frac{f}{g}) = \ker\tau$.
\end{proof}

We compute $\ann_{D_n}(g^{-1})$ with Algorithm \ref{heurAnnFa} above. Although in the case, when $-1$ is not the minimal integer root of the \BS polynomial of $g$, we have to use expensive algorithms like \ref{annfA}, we know no other methods to compute the annihilator in Weyl algebras. Also, no general algorithm for computing a complete system of operator equations (with operators including along partial differentiation also partial ($q$-)differences et cetera) with polynomial coefficients, annihilating a rational function, is known to us. In our opinion, the existence of an algorithm for $\ann_{D_n}(g^{-1})$ shows the intrinsic naturality of $D$-modules compared with other linear operators acting on $\K[x]$. The algorithm is implemented in \texttt{dmodapp.lib} and the corresponding procedure is called \texttt{annRat}.

\begin{example}\label{exAnnRat}
In this example we demonstrate the computation of annihilators of a rational function. The procedure \texttt{annRat} takes as arguments polynomials in a commutative ring and returns a Weyl algebra (of type \texttt{ring}) together with an object of type \texttt{ideal} called \texttt{LD} (cf. Example \ref{exSannfs}). Note, that \texttt{LD} is given in a Gr\"obner basis.

\begin{footnotesize}\begin{verbatim}
LIB "dmodapp.lib";
ring r = 0,(x,y),dp;
poly g = 2*x*y;  poly f = x^2 - y^3; // we will compute Ann(g/f)
option(redSB); option(redTail);      // get reduced minimal GB
def B = annRat(g,f); setring B;
LD;                                  // Groebner basis of Ann(g/f)
==> LD[1]=3*x*Dx+2*y*Dy+1
==> LD[2]=y^3*Dy^2-x^2*Dy^2+6*y^2*Dy+6*y
==> LD[3]=9*y^2*Dx^2*Dy-4*y*Dy^3+27*y*Dx^2+2*Dy^2
==> LD[4]=y^4*Dy-x^2*y*Dy+2*y^3+x^2
==> LD[5]=9*y^3*Dx^2-4*y^2*Dy^2+10*y*Dy-10
\end{verbatim}\end{footnotesize}
\end{example}

\section{$b$-function with respect to weights for an ideal}\label{bfctIdeal}

Let $0 \neq w \in \R^n_{\geq 0}$ and consider the $V$-filtration $V = \left\{ V_m \mid m \in \Z \right\}$ on $D_n$ with respect to $w$,  where $V_m$ is spanned by $\left\{ x^{\alpha} \d^{\beta} \mid -w \alpha + w \beta \leq m \right\}$ over $\K$. That is, $x_i$ and $\d_i$ get weights $-w_i$ and $w_i$ respectively. Note that then the relation $\d_i x_i = x_i \d_i + 1$ is homogeneous of degree $0$. It is known that the associated graded ring $\bigoplus_{m \in \Z} V_m / V_{m-1}$ is isomorphic to $D_n$, which allows us to identify it with the Weyl algebra.

From now on we assume, that $I \subset D_n$ is an ideal such that $D_n/I$ is a holonomic module. Since holonomic $D$-modules are cyclic (\eg \cite{Cou95}), for each holonomic $D$-module $M$ there exists an ideal $I_M$ such that $M \cong D_n/I_M$ as $D$-modules.

\begin{definition}
Let $0 \neq w \in \R^n_{\geq 0}$. For a non-zero polynomial 
\[
p = \sum_{\alpha, \beta \in \N_0^n} c_{\alpha \beta} x^{\alpha} \d^{\beta} \in D_n
\quad \text{with all but finitely many } c_{\alpha \beta} = 0
\]
we put $m = \max_{\alpha, \beta} \{ -w \alpha + w \beta \mid c_{\alpha \beta} \neq 0\} \in \R$ and define the \emph{initial form} of $p$ with respect to the weight $w$ as follows:
\[
\inw(p) := \sum_{\alpha,\beta \in \N_0^n:~ -w \alpha + w \beta = m} c_{\alpha \beta} x^{\alpha} \d^{\beta}.
\]
For the zero polynomial, we set $\inw(0) := 0$. Additionally, the ideal $\inw(I) := \K \cdot \{ \inw(p) \mid p \in I \}$ is called the \emph{initial ideal} of $I$ with respect to $w$.
\end{definition}

\begin{definition}
Let $0 \neq w \in \R^n_{\geq 0}$ and $s := \sum_{i=1}^n w_i x_i \d_i$. Then $\inw(I) \cap \K[s]$ is a principal ideal in $\K[s]$. Its monic generator $b_{I,w}(s)$ is called the \emph{global $b$-function} of $I$ with respect to the weight $w$.
\end{definition}

\begin{theorem}\label{b(s) is not zero}
The global $b$-function is nonzero.
\end{theorem}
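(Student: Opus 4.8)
The plan is to reduce the claim to the well-known nonvanishing of the classical $b$-function of a holonomic module, using the identification of the associated graded ring of the $V$-filtration with $D_n$ itself. The key fact I would invoke is that $D_n/I$ is holonomic by hypothesis, so it has finite length as a $D_n$-module, and holonomicity is preserved under the operations involved. First I would observe that $\inw(I)$ is a homogeneous ideal in the graded ring $\gr^V(D_n) \cong D_n$, and that $D_n/\inw(I) \cong \gr^V(D_n/I)$ as graded modules over $\gr^V(D_n)$. A standard argument (e.g.\ via the additivity of characteristic cycles, or by noting that passing to the initial ideal cannot increase dimension) shows that $D_n/\inw(I)$ is again holonomic whenever $D_n/I$ is.

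Next I would bring in the Euler-type operator $s = \sum_{i=1}^n w_i x_i \d_i$. The crucial structural observation is that $s$ is central in the graded ring $\gr^V(D_n)$: indeed, conjugation by the one-parameter group attached to the grading multiplies a homogeneous element of degree $d$ by a scalar, and $[s, \cdot]$ on $\gr^V(D_n)$ recovers exactly $d$ times that element, so $s$ acts as a scalar on each graded piece and in particular commutes with all of $\inw(I)$. Hence $\K[s]$ maps into the center of $D_n/\inw(I)$, and $\inw(I) \cap \K[s]$ is precisely the kernel of the composite $\K[s] \hookrightarrow \gr^V(D_n) \twoheadrightarrow D_n/\inw(I)$; its monic generator is $b_{I,w}(s)$. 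So the statement $b_{I,w} \neq 0$ is equivalent to saying that $s$ does not act injectively on $D_n/\inw(I)$, equivalently that the image of $\K[s]$ in $D_n/\inw(I)$ is not isomorphic to the polynomial ring $\K[s]$.

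Now I would finish by a dimension count. Suppose for contradiction that $b_{I,w} = 0$, i.e.\ $\K[s] \cap \inw(I) = 0$. Then $D_n/\inw(I)$ contains a copy of $\K[s]$, a central subring, and one can consider $D_n/\inw(I)$ as a $\K[s]$-module; choosing a generic maximal ideal $\mathfrak{m} = (s - \lambda)$ of $\K[s]$, the quotient $D_n/(\inw(I) + (s-\lambda))$ is still a nonzero module, and on the other hand holonomicity of $D_n/\inw(I)$ forces its Gel'fand--Kirillov dimension to be $n$, while adjoining the extra relation $s = \lambda$ should strictly drop the dimension (the symbol of $s$ is $\sum w_i \xi_i x_i$, which is not a zero divisor on the characteristic variety since the latter is $n$-dimensional and $s$ is not nilpotent on $D_n/\inw(I)$ under the assumption $\K[s]\hookrightarrow D_n/\inw(I)$). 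Iterating, one produces an infinite strictly descending chain or an object of negative dimension, contradicting finiteness. The cleanest packaging is: $D_n/\inw(I)$ holonomic $\Rightarrow$ it is a finitely generated torsion $\K[s]$-module (because $\GKdim$ of a finitely generated $\K[s]$-free module over $D_n$ would exceed $n$), hence $\Ann_{\K[s]}(D_n/\inw(I)) \neq 0$, and this annihilator is exactly $\inw(I)\cap \K[s]$ up to noticing that $1 \notin \inw(I)$.

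The main obstacle I anticipate is making the dimension bookkeeping in the last paragraph precise: one must argue carefully that a holonomic $D_n$-module cannot contain the polynomial ring $\K[s]$ as a central subring acting freely, which really is the assertion that $\GKdim_\K \K[s] = 1$ together with a Bernstein-inequality-type bound forcing the total $\GKdim$ up to $n+1$ if freeness held — so the genuinely load-bearing input is the holonomicity (hence $\GKdim = n$) of $D_n/\inw(I)$, and the preservation of holonomicity under $I \rightsquigarrow \inw(I)$. Everything else (centrality of $s$ in $\gr^V$, the identification of $\inw(I)\cap\K[s]$ with the relevant kernel) is formal.
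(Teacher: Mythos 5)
Your overall reduction (pass to $\inw(I)$, use holonomicity of $D_n/\inw(I)$, identify $\inw(I)\cap\K[s]$ with the kernel of $\K[s]\to D_n/\inw(I)$) is fine in outline, but two of your supporting claims are wrong, and the second is a genuine gap. First, $s=\sum_i w_ix_i\d_i$ is \emph{not} central in $\gr^V(D_n)\cong D_n$: as you yourself compute, $[s,p]=m\,p$ for a $(-w,w)$-homogeneous $p$ of degree $m$, which is nonzero whenever $m\neq 0$, so the image of $\K[s]$ in $D_n/\inw(I)$ need not be central. What the homogeneity of $\inw(I)$ actually gives --- and this is exactly the Leibniz computation in the paper's proof --- is $p\cdot s=(s+m)\,p$, hence $\inw(I)\cdot s\subseteq\inw(I)$, so that \emph{right} multiplication by $s$ is a well-defined endomorphism of the left $D_n$-module $D_n/\inw(I)$.

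Second, and more seriously, your finiteness step ``holonomic $\Rightarrow$ finitely generated torsion $\K[s]$-module, because a $\K[s]$-free submodule would push $\GKdim$ above $n$'' is false: $\K[s]$ has Gel'fand--Kirillov dimension $1$, and a module of dimension $n\geq 1$ can perfectly well contain a free $\K[s]$-submodule of rank one. Concretely, $M=D_1/D_1(\d_1-1)\cong\K[x_1]e^{x_1}$ is holonomic, and for $s=x_1\d_1$ one has $s^k\bullet e^{x_1}=q_k(x_1)e^{x_1}$ with $\deg q_k=k$, so the cyclic generator is not $\K[s]$-torsion. At this point your argument uses only holonomicity and not the $(-w,w)$-graded structure of $\inw(I)$, so it would prove this false general statement; the same objection applies to the ``quotienting by $s-\lambda$ strictly drops the dimension'' variant, which is asserted rather than proved. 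The input that actually closes the gap --- and is what the paper uses --- is the finite dimensionality over $\K$ of $\mathrm{End}_{D_n}(D_n/\inw(I))$ for holonomic modules (cited from \cite{SST00}): since $\inw(I)\cdot s\subseteq\inw(I)$, right multiplication by $s$ lies in this endomorphism ring, hence has a nonzero minimal polynomial $\mu$, and $\mu(s)\in\inw(I)\cap\K[s]$. (A smaller slip: $\Ann_{\K[s]}(D_n/\inw(I))$ is contained in, but in general not equal to, $\inw(I)\cap\K[s]$; only the inclusion matters for your purposes.) Without the endomorphism-finiteness theorem or some substitute for it, your proof does not go through.
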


We will give a proof of this well-known result in Section \ref{intersection}.

Following its definition, the computation of the global $b$-function of $I$ with respect to~$w$ can be done in two steps:
\begin{enumerate}
\item Compute the initial ideal $I'$ of $I$ with respect to $w$.
\item Compute the intersection of $I'$ with the subalgebra $\K[s]$.
\end{enumerate}

We will discuss both steps separately, starting with the initial ideal. It is important to mention, that although this procedure has been described in \cite{SST00}, this approach was completely treated by Noro in \cite{Noro02}, accompanied with a very impressive implementation in \texttt{Risa/Asir}. 

\subsection{Computing the initial ideal} \label{initial ideal}

In order to compute the initial ideal, the method of weighted homogenization is proposed in \cite{Noro02}, which we will describe below.

Let $u,v \in \R^n_{>0}$. The $G$-algebra $D_{(u,v)}^{(h)} := \K \langle x_1,\ldots,x_n,\d_1,\ldots,\d_n, h \mid \{x_j x_i = x_i x_j, \d_j \d_i \\= \d_i \d_j, x_i h = h x_i, \d_i h = h \d_i, \d_j x_i = x_i \d_j + \delta_{i,j} h^{u_i + v_j} \} \rangle$ is called the \emph{$n$-th weighted homogenized Weyl algebra} with weights $u,v$, \ie $x_i$ and $\d_i$ get weights $u_i$ and $v_i$ respectively.

For $p = \sum_{\alpha, \beta} c_{\alpha \beta} x^{\alpha} \partial^{\beta} \in D_n$ we define the \emph{weighted homogenization} of $p$ as follows:
\[
H_{(u,v)}(p) = \sum_{\alpha, \beta} c_{\alpha \beta} h^{\deg_{(u,v)}(p) - (u \alpha + v \beta)} x^{\alpha} \d^{\beta}.
\]
This definition naturally extends to a set of polynomials. Here, $\deg_{(u,v)}(p)$ denotes the weighted total degree of $p$ with respect to weights $u,v$ for $x,\d$ and weight $1$ for $h$.

For a monomial ordering $\prec$ on $D_n$, which is not necessarily a well-ordering, we define an associated homogenized global ordering $\prec^{(h)}$ in $D_{(u,v)}^{(h)}$ by setting $h \prec^{(h)} x_i, h \prec^{(h)} \d_i$ for all $i$ and,
\begin{alignat*}{2}
p \prec^{(h)} q \quad
&\text{if} \quad &\deg_{(u,v)}(p) &< \deg_{(u,v)}(q)\\
&\text{or} &\deg_{(u,v)}(p) &= \deg_{(u,v)}(q) 
\quad \text{and} \quad 
p_{\mid_{h=1}} \prec q_{\mid_{h=1}}.
\end{alignat*}

Note that for $u = v = (1,\ldots,1)$ this is exactly the standard homogenization as in \cite{SST00} and \cite{CN97}. Analogue statements of the following two theorems can be found in \cite{SST00} and \cite{Noro02} respectively.

\begin{theorem}\label{Theorem: dehomogenization}
Let $F$ be a finite subset of $D_n$ and $\prec$ a global ordering. If $G^{(h)}$ is a Gr\"obner basis of $\langle H_{(u,v)}(F) \rangle$ with respect to $\prec^{(h)}$, then ${G^{(h)}}_{\mid_{h=1}}$ is a Gr\"obner basis of $\langle F \rangle$ with respect to $\prec$.
\end{theorem}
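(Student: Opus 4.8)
The plan is to use the standard dehomogenization correspondence between a global ordering $\prec$ on $D_n$ and its homogenized companion $\prec^{(h)}$ on $D_{(u,v)}^{(h)}$. First I would establish the key compatibility lemma between leading monomials: for any nonzero $p \in D_n$, the leading monomial of $H_{(u,v)}(p)$ with respect to $\prec^{(h)}$ dehomogenizes (i.e.\ specializes at $h=1$) to the leading monomial of $p$ with respect to $\prec$. This follows directly from the construction of $\prec^{(h)}$: among the terms of $H_{(u,v)}(p)$, all share the same weighted total degree $\deg_{(u,v)}(p)$ (that is the whole point of homogenizing), so the first clause of the definition of $\prec^{(h)}$ never breaks ties among them, and the second clause compares exactly the $h=1$ specializations, i.e.\ the original terms of $p$ under $\prec$. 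Conversely, for any $P \in D_{(u,v)}^{(h)}$, writing $P = h^d \cdot H_{(u,v)}(P_{\mid h=1})$ for the appropriate $d \geq 0$ (using that $h$ is central and a non-zero-divisor), one sees $\lm_{\prec^{(h)}}(P)_{\mid h=1} = \lm_{\prec}(P_{\mid h=1})$, since multiplication by the central variable $h$ only shifts the $h$-exponent uniformly and does not affect the ordering on the $x,\d$ part.

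Next I would use these facts to prove the theorem. Let $G^{(h)}$ be a Gr\"obner basis of ${}_{D_{(u,v)}^{(h)}}\langle H_{(u,v)}(F)\rangle$ with respect to $\prec^{(h)}$. First, $G^{(h)}_{\mid h=1} \subset \langle F\rangle$: each $g \in G^{(h)}$ lies in the homogenized ideal, hence is a left combination $g = \sum_i a_i H_{(u,v)}(f_i)$ with $a_i \in D_{(u,v)}^{(h)}$ and $f_i \in F$; specializing at $h=1$ and using that $H_{(u,v)}(f_i)_{\mid h=1} = f_i$ gives $g_{\mid h=1} \in {}_{D_n}\langle F\rangle$. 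For the Gr\"obner property, take an arbitrary nonzero $q \in \langle F\rangle$. Then $H_{(u,v)}(q)$ lies in the homogenized ideal (a left combination expressing $q$ from $F$ homogenizes, up to multiplication by powers of the central $h$, to one expressing $H_{(u,v)}(q)$ from $H_{(u,v)}(F)$), so some $\lm_{\prec^{(h)}}(g)$ divides $\lm_{\prec^{(h)}}(H_{(u,v)}(q))$ in the commutative monoid of monomials. Specializing the divisibility relation at $h=1$ and invoking the leading-monomial lemma twice shows $\lm_{\prec}(g_{\mid h=1})$ divides $\lm_{\prec}(q)$. Hence $G^{(h)}_{\mid h=1}$ generates the leading ideal of $\langle F\rangle$, which is the Gr\"obner property; that it generates $\langle F\rangle$ as a left ideal follows from the containment above together with the leading-term argument.

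The main obstacle I expect is the bookkeeping around the central variable $h$ in the non-commutative setting: one must check that homogenizing a left representation $q = \sum a_i f_i$ really produces a left representation of $H_{(u,v)}(q)$ in terms of $H_{(u,v)}(f_i)$ after clearing $h$-denominators, which uses that the relations of $D_{(u,v)}^{(h)}$ are weighted-homogeneous of the prescribed degrees (the relation $\d_j x_i = x_i \d_j + \delta_{ij} h^{u_i+v_j}$ is homogeneous by design) so that multiplication in $D_{(u,v)}^{(h)}$ is a graded operation and $H_{(u,v)}$ is multiplicative up to powers of $h$. Once this graded-algebra structure is in place, everything else is the familiar homogenization argument transported verbatim, and the fact that $h$ is central keeps it from interfering with the monomial ordering on the remaining variables. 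This is precisely the non-commutative analogue of the classical statement in \cite{SST00} and \cite{CN97}, and the adaptation to general weights $(u,v)$ changes only the exponents, not the structure of the proof.
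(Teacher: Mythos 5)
Your forward compatibility lemma is fine, but your converse lemma --- that $(\lm_{\prec^{(h)}}(P))_{\mid h=1}=\lm_{\prec}(P_{\mid h=1})$ for \emph{any} $P\in D^{(h)}_{(u,v)}$ --- is false, and so is the identity $P=h^d\,H_{(u,v)}(P_{\mid h=1})$ you use to justify it: both hold only when $P$ is $(u,v,1)$-homogeneous. For instance (all weights $1$, $\prec$ lexicographic with $x\succ y$), $P=y^2h+x$ has $\lm_{\prec^{(h)}}(P)=y^2h$, whereas $\lm_\prec(P_{\mid h=1})=\lm_\prec(y^2+x)=x$. This matters because you apply the lemma to the elements $g\in G^{(h)}$, which the statement does not assume to be homogeneous, and without such an assumption the conclusion itself can fail: take $F=\{x,\,y^2\}\subset D_2$, $u=v=(1,1)$ and $\prec$ lexicographic with $x\succ y\succ\d_x\succ\d_y$. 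Then $H_{(u,v)}(F)=\{x,y^2\}$, and $G^{(h)}=\{x,\;x+y^2\}$ is a Gr\"obner basis of $\langle x,y^2\rangle$ with respect to $\prec^{(h)}$ (its $\prec^{(h)}$-leading monomials are $x$ and $y^2$, and $\{x,y^2\}$ is a Gr\"obner basis by the Product Criterion, Lemma \ref{prodCrit}), yet ${G^{(h)}}_{\mid h=1}=\{x,\,x+y^2\}$ has both $\prec$-leading monomials equal to $x$, which does not divide $\lm_\prec(y^2)=y^2$ although $y^2\in\langle F\rangle$. So the missing ingredient is precisely the $h$-homogeneity of $G^{(h)}$: the relations of $D^{(h)}_{(u,v)}$ are $(u,v,1)$-homogeneous, hence the algebra is graded, $\langle H_{(u,v)}(F)\rangle$ is a graded ideal, and Buchberger's algorithm applied to the homogeneous generators returns a homogeneous Gr\"obner basis (equivalently, pass to the reduced basis or to top-degree components). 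For homogeneous $g$ your converse lemma is correct (no cancellation occurs at $h=1$, and $\prec^{(h)}$ breaks ties exactly by $\prec$ on dehomogenizations), and with that restriction the remainder of your argument goes through; this homogeneity hypothesis is implicit in the sources the theorem generalizes (\cite{SST00}, \cite{Noro02}), and the paper itself gives no proof, so the burden of stating it falls on your write-up.

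A second, smaller slip: $H_{(u,v)}(q)\in\langle H_{(u,v)}(F)\rangle$ is not true in general; homogenizing a representation $q=\sum_i a_if_i$ only yields $h^N H_{(u,v)}(q)\in\langle H_{(u,v)}(F)\rangle$ for some $N\geq 0$. Already for $F=\{x^2,\,x^2+x\}$ one has $x\in\langle F\rangle$, but the homogenized ideal contains $xh$ and not $x$. Your parenthetical concedes the power of $h$, but the sentence then asserts membership of $H_{(u,v)}(q)$ itself. This slip is harmless: if $\lm_{\prec^{(h)}}(g)$ divides $h^N\lm_{\prec^{(h)}}(H_{(u,v)}(q))$, the exponents of the $x_i,\d_i$ are unaffected by the factor $h^N$, so after setting $h=1$ (and using the lemma for homogeneous $g$) one still obtains $\lm_\prec(g_{\mid h=1})\mid\lm_\prec(q)$, and the concluding generation argument is standard. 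Repair these two points and the proof is sound.
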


\begin{theorem}\label{Theorem: initial ideal}
Let $\prec$ be a global monomial ordering on $D_n$ and $\prec_{(-w,w)}$ the non-global ordering defined by
\begin{alignat*}{2}
		x^{\alpha} \d^{\beta} \prec_{(-w,w)} x^{\gamma} \d^{\delta} \quad
		&\text{if}\quad  & -w \alpha + w \beta &< -w \gamma + w \delta\\
		&\text{or} &-w \alpha + w \beta &= -w \gamma + w \delta
			\quad\text{and}\quad x^{\alpha} \d^{\beta} \prec x^{\gamma} \d^{\delta}.
\end{alignat*}
If $G^{(h)}$ is a Gr\"obner basis of $H_{(u,v)}(I)$ with respect to $\prec^{(h)}_{(-w,w)}$, then the set $\{ \inwh(g) \mid g \in G^{(h)}\}$ is a Gr\"obner basis of $\inwh(H_{(u,v)}(I))$ with respect to $\prec^{(h)}$.
\end{theorem}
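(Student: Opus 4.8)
The plan is to adapt the classical weight-deformation argument (the analogue for ordinary ideals appears in \cite{SST00} and \cite{Noro02}) to the $G$-algebra $D_{(u,v)}^{(h)}$. Write $J := H_{(u,v)}(I)$ and abbreviate $\prec_0 := \prec^{(h)}$, $\prec_1 := \prec^{(h)}_{(-w,w)}$. The first thing I would record is that the $(-w,w,0)$-weight (weights $-w_i$, $w_i$, $0$ on $x_i$, $\d_i$, $h$) turns $D_{(u,v)}^{(h)}$ into a \emph{graded} $G$-algebra, because the relations $\d_j x_i = x_i \d_j + \delta_{ij} h^{u_i+v_j}$ are homogeneous of weight $0$; in particular $\inwh(J)$ is a $(-w,w,0)$-homogeneous left ideal and $\inwh$ is additive within each weight layer. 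The engine of the proof is the leading-term compatibility $\lm_{\prec_1}(q) = \lm_{\prec_0}(\inwh(q))$, valid for every nonzero $q \in D_{(u,v)}^{(h)}$: since $\prec_1$ compares monomials first by $(-w,w,0)$-weight and only then by $\prec_0$, the $\prec_1$-maximal monomial of $q$ lies among the monomials of maximal weight in $q$, which are exactly the monomials of $\inwh(q)$, and is the $\prec_0$-maximal one among them.

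Granting this, for each $g \in G^{(h)}$ we get $\lm_{\prec_0}(\inwh(g)) = \lm_{\prec_1}(g)$, so — since $G^{(h)}$ is a Gr\"obner basis of $J$ with respect to $\prec_1$ — the left ideal generated by $\{\lm_{\prec_0}(\inwh(g)) \mid g \in G^{(h)}\}$ equals the $\prec_1$-leading ideal of $J$, i.e. the left ideal spanned by $\{\lm_{\prec_1}(q) \mid 0 \neq q \in J\}$. What remains is to identify this with the $\prec_0$-leading ideal of $\inwh(J)$. One inclusion is immediate, since each $\inwh(g) \in \inwh(J)$. For the other, I would take an arbitrary nonzero $h' \in \inwh(J)$; splitting it into $(-w,w,0)$-homogeneous components (each again in $\inwh(J)$ by homogeneity) and retaining the one that carries the $\prec_0$-largest monomial, I may assume $h'$ homogeneous, of weight $m$. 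Writing $h' = \sum_i c_i \inwh(p_i)$ with $p_i \in J$ and grouping the summands by $(-w,w,0)$-degree, homogeneity of $h'$ forces the groups of weight $\neq m$ to cancel and the weight-$m$ group to equal $h'$; hence $q := \sum_{i:\, \deg_{(-w,w,0)}(p_i)=m} c_i p_i \in J$ has top $(-w,w,0)$-weight exactly $m$ (no cancellation in the top layer, because $h' \neq 0$) and $\inwh(q) = h'$. Therefore $\lm_{\prec_0}(h') = \lm_{\prec_0}(\inwh(q)) = \lm_{\prec_1}(q)$ belongs to the $\prec_1$-leading ideal of $J$, which closes the circle. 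Consequently $\{\inwh(g) \mid g \in G^{(h)}\}$ is a subset of $\inwh(J)$ whose $\prec_0$-leading monomials generate the $\prec_0$-leading ideal of $\inwh(J)$, i.e. it is a Gr\"obner basis of $\inwh(H_{(u,v)}(I))$ with respect to $\prec^{(h)}$ (and, $\prec^{(h)}$ being global, it also generates this left ideal).

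I expect the main obstacle to be precisely the step that passes from an element of the mere $\K$-span $\inwh(J) = \K \cdot \{\inwh(p) \mid p \in J\}$ to a single genuine initial form $\inwh(q)$ with $q \in J$: this is where the $(-w,w,0)$-homogeneity of $\inwh(J)$ and the absence of cancellation in the top weight layer are used in an essential way. The remaining verifications are routine: that $\prec_1$ really is a monomial ordering on $D_{(u,v)}^{(h)}$ compatible with left multiplication (clear, the $(-w,w,0)$-weight being additive under the product and $\prec_0$ being already such an ordering), and that the characterisation of Gr\"obner bases via leading ideals holds in the $G$-algebra $D_{(u,v)}^{(h)}$ for both orderings — both standard and used throughout the paper.
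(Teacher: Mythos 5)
Your overall architecture is the same as the paper's (compatibility of leading monomials under the two orderings, combined with the Gr\"obner divisibility property), and you correctly supply a point the paper glosses over, namely that a $(-w,w,0)$-homogeneous element of the span $\inwh(H_{(u,v)}(I))$ is an honest initial form $\inwh(q)$ for a single $q \in H_{(u,v)}(I)$. However, your ``engine'' rests on a misreading of the ordering $\prec^{(h)}_{(-w,w)}$. In the paper this is the homogenization $(\prec_{(-w,w)})^{(h)}$ in the sense of Section~\ref{initial ideal}: it compares monomials \emph{first} by total $(u,v,1)$-degree (this is exactly what makes it global, the whole point of homogenizing the non-global ordering $\prec_{(-w,w)}$), and only then by the $(-w,w)$-weight and finally by $\prec$. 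It does \emph{not} compare first by the $(-w,w,0)$-weight, as you assert; under your reading one would have $x_i \prec^{(h)}_{(-w,w)} 1$ whenever $w_i>0$, so it would not even be a well-ordering. Consequently your key identity $\lm_{\prec^{(h)}_{(-w,w)}}(q) = \lm_{\prec^{(h)}}(\inwh(q))$ is false for general $q$: take $n=1$, $u=v=w=1$ and $q = x^3 + \d_1$; then $\lm_{\prec^{(h)}_{(-w,w)}}(q) = x^3$ while $\inwh(q) = \d_1$. The identity holds precisely when all monomials of $q$ have the same $(u,v,1)$-degree, i.e.\ for $(u,v)$-homogeneous $q$ --- and this is exactly why the paper's proof invokes the $(u,v)$-homogeneity of $f$ and $g$, implicitly using that $H_{(u,v)}(I)$ is a $(u,v)$-graded ideal so that $G^{(h)}$ may be taken homogeneous.

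Since you never use homogeneity, your argument has a genuine gap at both places where the identity is invoked: for $g \in G^{(h)}$ (an arbitrary Gr\"obner basis may contain inhomogeneous elements), and, more seriously, for the element $q=\sum_i c_i p_i$ you construct, which your construction does not make $(u,v)$-homogeneous. The proof is repairable along your own lines: observe that $H_{(u,v)}(I)$ is $(u,v)$-graded and assume (or reduce to the case that) $G^{(h)}$ consists of $(u,v)$-homogeneous elements; note that $\inwh(H_{(u,v)}(I))$ is then graded for both the weight and the $(u,v)$-degree, so you may take your element $h'$ homogeneous for both gradings; choose the representing $p_i$ to be $(u,v)$-homogeneous of the same $(u,v)$-degree as $h'$, so that $q$ is $(u,v)$-homogeneous; and only then apply the leading-monomial identity, now legitimately restricted to homogeneous elements. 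With these amendments your argument becomes a fleshed-out version of the paper's proof.
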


\begin{proof}
Let $f' \in \inwh(H_{(u,v)}(I))$ be $(-w,w,0)$-homogeneous. There exist $f \in H_{(u,v)}(I)$ and $g \in G^{(h)}$ such that $f' = \inwh(f)$ and $\lm_{\prec^{(h)}_{(-w,w)}}(g) \mid \lm_{\prec^{(h)}_{(-w,w)}}(f)$. Since $f,g$ are $(u,v)$-ho\-mo\-ge\-neous, we have
$$
\lm_{\prec^{(h)}_{(-w,w)}}(g) = \lm_{\prec^{(h)}}(\inwh(g)),\qquad
\lm_{\prec^{(h)}_{(-w,w)}}(f) = \lm_{\prec^{(h)}}(\inwh(f)),
$$
which concludes the proof.
\end{proof}

Summarizing the results from this section, we obtain the following algorithm to compute the initial ideal.

\begin{algorithm}[\texttt{InitialIdeal}]\label{InitialIdeal}
\begin{algorithmic}
	\REQUIRE $I\subset D_n$ a holonomic ideal, 
		$0 \neq w \in \R^n_{\geq 0}$, 
		$\prec$ a global ordering on $D_n$, 
		$u,v \in \R^n_{>0}$
	\ENSURE A Gr\"obner basis $G$ of $\inw(I)$ with respect to $\prec$
	\STATE $\prec^{(h)}_{(-w,w)} :=$ the homogenized ordering as defined in theorem \ref{Theorem: initial ideal}
	\STATE $G^{(h)} :=$ a Gr\"obner basis of $H_{(u,v)}(I)$ with respect to $\prec^{(h)}_{(-w,w)}$
	\RETURN $G = \inw({G^{(h)}}_{\mid_{h=1}})$
\end{algorithmic}
\end{algorithm}

\subsection{Intersecting an ideal with a principal subalgebra}\label{intersection}

We will now consider a much more general setting than needed to compute the global $b$-function. Let $A$ be an associative $\K$-algebra. We are interested in computing the intersection of a left ideal $J \subset A$ with the subalgebra $\K[s]$ of $A$ where $s \in A$ is an arbitrary non-constant element. This intersection is always generated by one element since $\K[s]$ is a principal ideal domain. In other words, we want to find the monic generator $b \in A$ such that \ $\langle b \rangle = J \cap \K[s]$.

\medskip
For this section, we will assume that there is an ordering on $A$ such that there exists a finite left Gr\"obner basis $G$ of $J$.

\medskip
Then we can distinguish between the following four situations:
\begin{enumerate}[label=\arabic{*}., ref=\arabic{*}.]
	\item \label{sit 1} No leading monomials of elements in $G$ divide the leading monomial of any power of $s$.
	\item \label{sit 2} There is an element in $G$ whose leading monomial divides the leading monomial of some power of $s$.
		In this situation, we have the following sub-situations.
		\begin{enumerate}[label=2.\arabic{*}., ref=2.\arabic{*}.]
			\item \label{sit 2.1} $J \cdot s \subset J$ and $\dim_{\K}(\mathrm{End}_{A}(A/J)) < \infty$.
			\item \label{sit 2.2} One of the two conditions in 2.1. does not hold.
			\begin{enumerate}[label=2.2.\arabic{*}., ref=2.2.\arabic{*}.]
				\item \label{sit 2.2.1} The intersection is zero.
				\item \label{sit 2.2.2} The intersection is not zero.
			\end{enumerate}
		\end{enumerate}
\end{enumerate}

\begin{lemma} \label{la:zero intersection}
If there exists no $g \in G$ such that $\lm(g)$ divides $\lm(s^k)$ for some $k \in \N_0$, then $J \cap \K[s] = \{ 0 \}$.
\end{lemma}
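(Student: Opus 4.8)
The plan is to show that the normal form of any power $s^k$ with respect to the Gröbner basis $G$ is again a polynomial in $s$ of the same degree, so that a genuine dependence among the $s^k$'s inside $J$ would force a nontrivial linear dependence among their \emph{normal forms}, which is impossible since these all have distinct leading monomials (powers of $\lm(s)$) that, by hypothesis, are not divisible by any $\lm(g)$, $g\in G$. Concretely, suppose for contradiction that $0\neq b=\sum_{k=0}^d c_k s^k\in J\cap\K[s]$ with $c_d\neq 0$. Since $b\in J=\langle G\rangle$, we have $\NF(b,G)=0$.

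Next I would compute $\NF(s^k,G)$ for each $k\le d$. The key observation is that reduction by $G$ can never touch the monomial $\lm(s^k)=\lm(s)^k$: by assumption no $\lm(g)$ divides $\lm(s^k)$, and more generally no $\lm(g)$ divides $\lm(s^j)$ for any $j$, so the leading term of $s^k$ survives reduction and, inductively, every intermediate reductum keeps $\lm(s)^k$ as a standard (irreducible) monomial. Hence $\NF(s^k,G)=s^k + r_k$ where $r_k$ has leading monomial strictly smaller than $\lm(s)^k$; in particular $\lm(\NF(s^k,G))=\lm(s)^k$ and these leading monomials are pairwise distinct for distinct $k$. By $\K$-linearity of the normal form operator, $0=\NF(b,G)=\sum_{k=0}^d c_k\,\NF(s^k,G)$. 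Comparing leading monomials: the term with $k=d$ contributes $c_d\lm(s)^d$, and since $\lm(s)^d$ is strictly larger than every monomial appearing in $\NF(s^k,G)$ for $k<d$ (and than the lower terms $r_d$), it cannot be cancelled. This forces $c_d=0$, a contradiction. Therefore $J\cap\K[s]=\{0\}$.

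The one point demanding care --- and the main obstacle --- is justifying rigorously that $\lm(s)^k$ is genuinely irreducible with respect to $G$ and that this property is preserved along the whole reduction chain of $s^k$, rather than merely at the first step. This needs the hypothesis applied to all powers of $s$ simultaneously: whenever a reduction step rewrites a monomial $m$ with $\lm(g)\mid m$, the monomials introduced are $\lm(m/\lm(g))\cdot(\text{monomials of }g)$, all $\prec m$, and one must check none of them equals $\lm(s)^k$. Since every monomial produced during reduction of $s^k$ is $\preceq\lm(s)^k$ and the only way to produce $\lm(s)^k$ would be from a reducible monomial $m\succeq\lm(s)^k$ with $m=\lm(s)^k$ itself, which is irreducible by hypothesis, the monomial $\lm(s)^k$ can appear only as the untouched leading term. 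Making this induction on the reduction order precise (using that $\prec$ is a well-ordering on the relevant monomials, or that reduction terminates because $G$ is a Gröbner basis) is the technical heart; once it is in place, the linear-algebra argument above closes the proof immediately.
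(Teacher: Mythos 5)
Your proof is correct. Note that the paper states Lemma \ref{la:zero intersection} without any proof, so there is nothing to match against; compared with the standard direct argument, your route through normal forms is sound but longer than necessary. The short version: suppose $0\neq b=\sum_{k=0}^d c_k s^k\in J\cap\K[s]$ with $c_d\neq 0$. Since the monomial ordering is a well-ordering compatible with multiplication, $\lm(s^k)=\lm(s)^k$, and $\lm(s)^d$ strictly dominates every monomial occurring in $c_ks^k$ for $k<d$ as well as the lower terms of $s^d$; hence $\lm(b)=\lm(s^d)$. Because $G$ is a left Gr\"obner basis of $J$ and $b\in J\setminus\{0\}$, the defining property of a Gr\"obner basis gives some $g\in G$ with $\lm(g)\mid\lm(b)=\lm(s^d)$, contradicting the hypothesis. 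This bypasses the two points you had to labour over, namely the $\K$-linearity of the reduced normal form and the persistence of $\lm(s)^k$ along the entire reduction chain; your justification of the latter is nevertheless correct (the monomial $\lm(s)^k$ is irreducible by hypothesis, and every monomial created in a reduction step is strictly smaller than the rewritten monomial, which, being reducible, is itself strictly smaller than $\lm(s)^k$), and the $\K$-linearity you invoke does hold for normal forms with respect to a Gr\"obner basis, so your argument closes correctly; it simply trades one application of the divisibility property of leading monomials of $J$ for a normal-form bookkeeping argument that repackages the same fact.
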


The lemma covers the first case above. In the second case however, we cannot in general state whether the intersection is trivial or not as the following example illustrates.

\begin{remark}
The converse of the previous lemma is wrong. For instance, consider $\K[x,y]$ and $J = \langle y^2 + x \rangle$. Then $J \cap \K[y] = \{ 0 \}$ while $\{ y^2 + x\}$ is a Gr\"obner basis of $J$ for any ordering.
\end{remark}

In situation \ref{sit 2.1} though, the intersection is not zero as the following lemma shows, inspired by the sketch of the proof of Theorem \ref{b(s) is not zero} in \cite{SST00}.

\begin{lemma}\label{lemma minimal polynomial}
Let $J \cdot s \subset J$ and $\dim_{\K}(\mathrm{End}_{A}(A/J)) < \infty$. Then $J \cap \K[s] \neq \{ 0 \}$.
\end{lemma}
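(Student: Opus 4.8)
The plan is to exploit the endomorphism $\rho_s : A/J \to A/J$ given by right multiplication by $s$, which is well-defined precisely because $J \cdot s \subset J$. Since $A/J$ is a left $A$-module and $s$ is central enough in the sense that right multiplication commutes with the left action (left multiplication by any $a \in A$ and right multiplication by $s$ commute by associativity), the map $\rho_s$ is indeed a left $A$-module endomorphism, so $\rho_s \in \mathrm{End}_A(A/J)$. More generally, every power $\rho_s^k = \rho_{s^k}$ lies in $\mathrm{End}_A(A/J)$.

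The key step is then a dimension-counting argument. Since $\dim_\K(\mathrm{End}_A(A/J)) < \infty$, say it equals $N$, the $N+1$ endomorphisms $\mathrm{id} = \rho_s^0, \rho_s^1, \ldots, \rho_s^N$ must be $\K$-linearly dependent. Hence there is a nonzero polynomial $b(s) = \sum_{k=0}^N c_k s^k \in \K[s]$ such that $\sum_k c_k \rho_s^k = 0$ as an endomorphism of $A/J$. Evaluating this at the class of $1 \in A/J$ gives $\overline{b(s) \cdot 1} = \overline{1 \cdot b(s)} = 0$ in $A/J$, i.e. $b(s) \in J$. Since also $b(s) \in \K[s]$, we conclude $b(s) \in J \cap \K[s]$, and $b(s) \neq 0$ because not all $c_k$ vanish; after normalizing we may take $b$ monic. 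Therefore $J \cap \K[s] \neq \{0\}$.

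The one subtlety worth spelling out — and the main thing to get right rather than a genuine obstacle — is the identification of $\rho_s$ as a \emph{left}-module endomorphism and the bookkeeping that $\rho_s^k$ corresponds to right multiplication by $s^k$, which uses associativity in $A$ together with $J \cdot s^k \subset J$ (a trivial consequence of $J \cdot s \subset J$). I would state this as a short preliminary observation. One should also note that the argument genuinely needs $J$ to be a \emph{proper} left ideal so that $A/J \neq 0$ and the class of $1$ is nonzero; when $J = A$ the statement is vacuous or must be read with the convention $\K[s] \cap A = \K[s]$, but in the intended application $A/J$ is a nonzero (holonomic) module, so this is not an issue. No heavy machinery is required; the whole proof is a few lines of linear algebra on $\mathrm{End}_A(A/J)$ once the endomorphism $\rho_s$ is in hand.
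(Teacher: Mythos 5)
Your proof is correct and follows essentially the same route as the paper: both identify right multiplication by $s$ as a well-defined left $A$-module endomorphism of $A/J$ (using $J \cdot s \subset J$), and both use $\dim_{\K}(\mathrm{End}_A(A/J)) < \infty$ to produce a nonzero polynomial annihilating this endomorphism (the paper phrases it as the minimal polynomial, you as an explicit linear dependence among its powers), which upon evaluation at the class of $1$ yields a nonzero element of $J \cap \K[s]$. The extra care you take about well-definedness, $\rho_s^k = \rho_{s^k}$, and the degenerate case $J = A$ is harmless but not needed beyond what the paper already records.
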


\begin{proof}
Consider the right multiplication with $s$ as a map $A/J \ra A/J$ which is a well-defined $A$-module endomorphism of $A/J$ as $a-a' \in J$ implies that $(a-a')s \in J \cdot s \subset J$, which holds by assumption for all $a,a' \in A$. Since $\mathrm{End}_{A}(A/J)$ is finite dimensional, linear algebra guarantees that this endomorphism has a well-defined non-zero minimal polynomial $\mu$. Moreover, $\mu$ is precisely the monic generator of $J \cap \K[s]$ as $\mu(s) = [0]$ in $A/J$, hence $\mu(s) \in J \cap \K[s]$, and $\deg(\mu)$ is minimal by definition.
\end{proof}

\begin{remark}\label{End(A/J)}
In particular, the lemma holds if $A/J$ itself is a finite dimensional $A$-module. In the case where $A$ is a Weyl algebra and $A/J$ is a holonomic module, we know that $\dim_{\K}(\mathrm{End}_{A}(A/J))$ is finite (cf. \cite{SST00}).
\end{remark}

For situation \ref{sit 2.1}, we have reduced our problem of intersecting an ideal with a subalgebra generated by one element to a problem from linear algebra by the proof of the lemma, namely to the one of finding the minimal polynomial of an endomorphism.

\begin{proof}[Proof of Theorem \ref{b(s) is not zero}]
Let $0 \neq w \in \R^n_{\geq 0}, J := \inw(I)$ for a holonomic ideal $I \subset D_n$ and $s := \sum_{i=1}^n w_i x_i \d_i$. Without loss of generality let $0 \neq p = \sum_{\alpha,\beta} c_{\alpha,\beta} x^{\alpha} \d^{\beta} \in J$ be $(-w,w)$-homogeneous. Then we obtain for every monomial in $p$ by using the Leibniz rule
\[
x^{\alpha} \d^{\beta} x_i \d_i = x^{\alpha+e_i} \d^{\beta+e_i} + \beta_i x^{\alpha} \d^{\beta}
= (\d_i x_i^{\alpha_i+1} - (\alpha_i + 1) x_i^{\alpha_i}) \frac{x^{\alpha}}{x_i^{\alpha_i}} \d^{\beta} 
	+ \beta_i x^{\alpha} \d^{\beta} 
\]
\[
= (\d_i x_i - (\alpha_i + 1) + \beta_i) x^{\alpha} \d^{\beta}
= (x_i \d_i - \alpha_i + \beta_i) x^{\alpha} \d^{\beta}.
\]
Put $m = -w \alpha + w \beta$ for some term $c_{\alpha,\beta} x^{\alpha} \d^{\beta}$ in $p$ where $c_{\alpha,\beta}$ is non-zero. Since $p$ is $(-w,w)$-homogeneous, $m$ does not depend on the choice of this term. Hence,
\begin{align*}
p \cdot s
&= p \sum_{i=1}^n w_i x_i \d_i
 = \sum_{i=1}^n w_i \sum_{\alpha,\beta} (x_i \d_i - \alpha_i + \beta_i) c_{\alpha,\beta} x^{\alpha} \d^{\beta} \\
&= s \cdot p + \sum_{i=1}^n \sum_{\alpha,\beta} w_i (- \alpha_i + \beta_i) c_{\alpha,\beta} x^{\alpha} \d^{\beta} 
 = (s + m) \cdot p \in J.
\end{align*}
Since $D_n/J$ is holonomic (cf. \cite{SST00}) and $J \cdot s \subset J$, Remark \ref{End(A/J)} and Lemma \ref{lemma minimal polynomial} yield the claim.
\end{proof}

\medskip
If one knows in advance that the intersection is not zero, the following algorithm terminates.

\begin{algorithm}[\texttt{principalIntersect}] \label{PrincipalIntersect} ~
\begin{algorithmic}
	\REQUIRE $s \in A, J \subset A$ a left ideal such that $J \cap \K[s] \neq \{ 0 \}$.
	\ENSURE $b \in \K[s]$ monic such that $J \cap \K[s] = \langle b \rangle$
	\STATE $G :=$ a finite left Gr\"obner basis of $J$ (assume it exists)
	\STATE $i := 1$
	\LOOP
		\IF{there exist $a_0, \ldots, a_{i-1} \in \K$ such that $\NF(s^i,G) + \sum_{j=0}^{i-1} a_j \NF(s^j,G) = 0$}
  	 	\RETURN $b := s^i + \sum_{j=0}^{i-1} a_j s^j$
  	\ELSE \STATE $i := i+1$
 		\ENDIF
	\ENDLOOP
\end{algorithmic}
\end{algorithm}

Note that because $\NF(s^i,G) + \sum_{j=0}^{i-1} a_j \NF(s^j,G) = 0$ is equivalent to $s^i + \sum_{j=0}^{i-1} a_j s^j \in J$, the algorithm searches for a monic polynomial in $\K[s]$ that also lies in $J$. This is done by going degree by degree through the powers of $s$ until there is a linear dependency. This approach also ensures the minimality of the degree of the output. The algorithm terminates if and only if $J \cap \K[s] \neq \{ 0 \}$. Note that this approach works over any field.

The check whether there is a linear dependency over $\K$ between the computed normal forms of the powers of $s$ is done by the procedure \texttt{linReduce} in our implementation.

\subsubsection{An enhanced computation of normal forms}

When computing normal forms of the form $\NF(s^i,J)$ like in algorithm \ref{PrincipalIntersect} we can speed up the reduction process by making use of the previously computed normal forms.

\begin{lemma} \label{NF computing 1}
Let $A$ be a $\K$-algebra, $J \subset A$ a left ideal and let $f \in A$.
For $i \in \N$ put $r_i = \NF(f^i,J)$, $q_i = f^i - r_i \in J$ and $c_i = \frac{\lc(q_i r_1)}{\lc(r_1 q_i)}$ provided $r_1 q_i \not=0$. For $r_1 q_i=0$ we put $c_i=0$.
Then we have for all $i \in \N$
\[
	r_{i+1} 
	= \NF(f r_i,J)
	= \NF([f^i-r_i,r_1]_{c_i} + r_i r_1,J).
\]
\end{lemma}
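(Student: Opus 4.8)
The plan is to prove the two claimed equalities separately, both relying only on elementary properties of normal forms: $\K$-linearity of $\NF(-,J)$, the fact that $\NF(q,J)=0$ for $q\in J$, and that $\NF(a+b,J)=\NF(a,J)$ whenever $b\in J$ (equivalently $\NF(a+b,J)=\NF(\NF(a,J)+b,J)$). First I would establish $r_{i+1}=\NF(f^{i+1},J)=\NF(fr_i,J)$. By definition $q_i=f^i-r_i\in J$, so $f\cdot q_i\in J$ since $J$ is a \emph{left} ideal; therefore $f^{i+1}=fr_i+fq_i$ differs from $fr_i$ by an element of $J$, whence $\NF(f^{i+1},J)=\NF(fr_i,J)$. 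This is the easy half and uses nothing beyond the left-ideal property.

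For the second equality $\NF(fr_i,J)=\NF\big([f^i-r_i,r_1]_{c_i}+r_ir_1,J\big)$, the idea is to rewrite $fr_i$ modulo $J$. Start from $f=q_1+r_1$ with $q_1=f-r_1\in J$, so $fr_i=q_1r_i+r_1r_i$; again $q_1r_i\in J$, giving $\NF(fr_i,J)=\NF(r_1r_i,J)$. Now I want to replace $r_1r_i$ by $r_ir_1$ up to something in $J$ and up to the skew-commutator correction. Write $r_1r_i=r_ir_1+[r_1,r_i]$. The point of introducing $q_i=f^i-r_i$ is that $r_i$ and $f^i$ differ by the element $q_i\in J$; since $q_i r_1\in J$ and the construction is symmetric in which factor we perturb, one manipulates $[f^i-r_i,r_1]_{c_i}=(f^i-r_i)r_1-c_i\,r_1(f^i-r_i)=q_ir_1-c_i r_1 q_i$. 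Because $q_ir_1,\,r_1q_i\in J$ (left ideal), the whole expression $[f^i-r_i,r_1]_{c_i}+r_ir_1$ differs from $r_ir_1$ by an element of $J$, so $\NF\big([f^i-r_i,r_1]_{c_i}+r_ir_1,J\big)=\NF(r_ir_1,J)$. It thus remains to see $\NF(r_1r_i,J)=\NF(r_ir_1,J)$, i.e.\ that $[r_1,r_i]\in J$, or more precisely that the skew-commutator structure makes the two normal forms agree.

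The crux, and the step I expect to be the main obstacle, is precisely this identification of $\NF(r_1r_i,J)$ with $\NF(r_ir_1,J)$ and the role of the scalar $c_i=\lc(q_ir_1)/\lc(r_1q_i)$. The scalar is chosen so that $\lc\big(q_ir_1-c_i r_1q_i\big)$ cancellation is controlled — the leading terms of $q_ir_1$ and $r_1q_i$ agree up to the scalar $c_i$ (they have the same leading monomial since in a $G$-algebra of Lie type $\lm(ab)=\lm(a)\lm(b)$), so the skew-bracket $[q_i, r_1]_{c_i}$ has strictly smaller leading monomial than $q_ir_1$; this is the analogue of the Generalized Product Criterion (Lemma \ref{prodCrit}) and is exactly what keeps the intermediate expressions \emph{slim}, which is the practical motivation. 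The honest content to check is: (i) $\lm(q_ir_1)=\lm(r_1q_i)$ so that $c_i$ is the right scalar to kill the top term; (ii) consequently $q_ir_1-c_ir_1q_i$, hence $[f^i-r_i,r_1]_{c_i}$, has smaller leading monomial than the naive product; and (iii) adding $r_ir_1$ back and reducing gives the same normal form as $fr_i$. Once (i)–(iii) are in place the chain $\NF(fr_i,J)=\NF(r_1r_i,J)=\NF(r_ir_1+[r_1,r_i],J)$ together with $q_ir_1-c_ir_1q_i\in J$ closes the argument. I would present (i) via the $G$-algebra multiplication rule and handle the degenerate case $r_1q_i=0$ (where $c_i=0$ and the skew-bracket degenerates to $q_ir_1$) as a trivial sub-case.
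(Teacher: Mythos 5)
Your first equality is fine: $f^{i+1}=fr_i+fq_i$ and $fq_i\in J$ because $J$ is a \emph{left} ideal, so $\NF(f^{i+1},J)=\NF(fr_i,J)$. But the second half breaks down on a sided-ness error that you then cannot repair. You claim $q_1r_i\in J$ and $q_ir_1\in J$ ``since $J$ is a left ideal''; these are \emph{right} multiples of elements of $J$, and a left ideal is not closed under right multiplication. Only $r_1q_i\in J$ (and $f^iq_1\in J$, $fq_i\in J$) are guaranteed. Because of this, your reduction of the problem to $\NF(r_1r_i,J)=\NF(r_ir_1,J)$ is both unjustified as a step and not a statement you can prove: $[r_1,r_i]$ need not lie in $J$, and you yourself flag this as the unresolved ``crux''. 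The appeal to $\lm(ab)=\lm(a)\lm(b)$ in $G$-algebras is also outside the stated hypotheses (the lemma is for an arbitrary $\K$-algebra) and, as it turns out, is not needed for correctness of the identity at all.

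The missing idea is to recombine the terms so that only left multiples of $J$-elements ever appear. Writing $[f^i-r_i,r_1]_{c_i}+r_ir_1=q_ir_1-c_ir_1q_i+r_ir_1=(q_i+r_i)r_1-c_ir_1q_i=f^ir_1-c_ir_1q_i$, one never needs $q_ir_1\in J$: since $r_1q_i\in J$, the normal form equals $\NF(f^ir_1,J)$. Then $f^ir_1=f^{i+1}-f^iq_1$ with $f^iq_1\in J$, so this is $\NF(f^{i+1},J)=r_{i+1}$, matching the first equality. Note that this argument works for \emph{any} scalar in place of $c_i$; the specific choice $c_i=\lc(q_ir_1)/\lc(r_1q_i)$ only serves (in the $G$-algebra case, where $\lm(q_ir_1)=\lm(r_1q_i)$) to cancel the top term and keep the intermediate expression small, which is the computational point of the lemma but not part of its proof of correctness. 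Your instinct about the role of $c_i$ was right as motivation, but it should not carry the logical weight you assigned to it.
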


As a consequence, we obtain the following result for some $\K$-algebras of special importance.

\begin{corollary} \label{NF computing 2}
If $A$ is a $G$-algebra of Lie type (\eg a Weyl algebra),  then
\[
r_{i+1} = \NF(f r_i,J) = \NF([f^i-r_i,r_1] + r_i r_1,J) \text{  holds}.
\]
If $A$ is commutative, we have $ r_{i+1} = \NF(r_i r_1,J) = \NF(r_1,J)^{i+1}= \NF(r_1^{i+1},J)$.
\end{corollary}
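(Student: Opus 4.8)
The plan is to read off both assertions directly from Lemma~\ref{NF computing 1}; in each of the two settings the only real task is to pin down the scalar $c_i$ appearing there, so that the skew bracket $[f^i-r_i,r_1]_{c_i}$ degenerates into something cheap to evaluate.

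First I would treat the $G$-algebra of Lie type case, where the aim is to show $c_i=1$ (so that $[\cdot,\cdot]_{c_i}$ is the ordinary Lie bracket). The key input is the sharpest form of the PBW property of such algebras: since every defining relation reads $x_jx_i=x_ix_j+d_{ij}$ with $\lm(d_{ij})\prec x_ix_j$, rewriting an arbitrary product of generators into PBW normal form introduces, besides the properly ordered leading term (which retains coefficient $1$), only strictly lower-order terms; an easy induction then gives $\lm(ab)=\lm(a)\lm(b)$ and $\lc(ab)=\lc(a)\lc(b)$ for all $a,b\in A$. Applying the latter with $(a,b)=(q_i,r_1)$ and with $(a,b)=(r_1,q_i)$ yields $\lc(q_ir_1)=\lc(q_i)\lc(r_1)=\lc(r_1q_i)$ whenever these products are nonzero, hence $c_i=1$ there. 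The remaining branch $r_1q_i=0$ (for which the lemma sets $c_i=0$) I would dispose of using that a $G$-algebra is a domain: then $r_1=0$ or $q_i=0$; if $r_1=0$ then $f=r_1+q_1\in J$, so every $r_i$ vanishes and the identity is trivial, and if $q_i=0$ then $[f^i-r_i,r_1]_{c_i}=q_ir_1-c_ir_1q_i=0$ for any value of $c_i$. In all cases $[f^i-r_i,r_1]_{c_i}=[f^i-r_i,r_1]$, and Lemma~\ref{NF computing 1} then delivers the displayed formula.

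For the commutative case the ordinary Lie bracket $[f^i-r_i,r_1]$ vanishes identically, so Lemma~\ref{NF computing 1} collapses to $r_{i+1}=\NF(r_ir_1,J)$. From here the remaining equalities are pure bookkeeping with the left ideal $J$: writing $r_1=f-q_1$ and $r_i=f^i-q_i$ with $q_1,q_i\in J$, one gets $r_ir_1\equiv f^if=f^{i+1}\pmod J$ and likewise $r_1^{i+1}\equiv f^{i+1}\pmod J$, and since $\NF(\cdot,J)$ depends only on the residue class modulo $J$, the normal forms of $r_ir_1$, of $r_1^{i+1}$ and of $f^{i+1}$ all coincide with $r_{i+1}$, which is the asserted chain of equalities.

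I do not anticipate a genuine obstacle, as Lemma~\ref{NF computing 1} already carries the substance. The one step that is not mechanical is the leading-coefficient identity $\lc(ab)=\lc(a)\lc(b)$ for $G$-algebras of Lie type, together with the standard fact that such algebras are domains, which is used only to rule out the branch $c_i=0$; that is the place where I would take particular care.
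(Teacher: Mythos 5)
Your derivation is correct and is exactly the intended one: the paper states this corollary without proof as an immediate specialization of Lemma \ref{NF computing 1}, and your computation of $c_i$ (via $\lc(ab)=\lc(a)\lc(b)$ in a $G$-algebra of Lie type, with the degenerate branch $r_1q_i=0$ disposed of by the domain property) together with the congruences $r_ir_1\equiv r_1^{i+1}\equiv f^{i+1}\pmod J$ in the commutative case supplies precisely the missing argument. One small caveat: the printed middle term $\NF(r_1,J)^{i+1}$, read literally, equals $r_1^{i+1}$ (since $\NF$ is idempotent) and need not coincide with $r_{i+1}$ (take $J=\langle x^2\rangle$, $f=x$); it should be read as $\NF(r_1^{i+1},J)$, which is what your congruence argument correctly establishes.
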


Note, that computing Lie bracket $[f,g]$ both in theory and in practice is easier and faster, than to compute $[f,g]$ as $f\cdot g - g \cdot f$, see \eg \cite{LS03}.

\subsubsection{Applications}

Apart from computing global $b$-functions, there are various other applications of Algorithm~\ref{PrincipalIntersect}.

\paragraph{Solving Zero-dimensional Systems.}

Recall that an ideal $I \subset \K[x_1,\ldots,x_n]$ is called zero-di\-men\-sion\-al if one of the following equivalent conditions holds:
\begin{itemize}
	\item $\K[x_1,\ldots,x_n] / I$ is finite dimensional as a $\K$-vector space.
	\item For each $1 \leq i \leq n$ there exist $0 \neq f_i \in I \cap \K[x_i]$.
	\item The cardinality of the zero-set of $I$ is finite.
\end{itemize}

In order to compute the zero-set of $I$, one can use the classical triangularization algorithms. These algorithms require to compute a Gr\"obner basis with respect to some elimination ordering (like lexicographic one), which might be very hard.

By Algorithm \ref{PrincipalIntersect}, a generator of $I \cap \K[x_i]$ can be computed without these expensive orderings. Instead, any ordering, hence a better suited one, may be freely chosen.

A similar approach is used in the celebrated FGLM algorithm (cf. \cite{FGLM93}).

\paragraph{Computing Central Characters and Algebraic Dependence.}

Let $A$ be an associative $\K$-algebra. Intersection of a left ideal with the center of $A$, which is isomorphic to a commutative ring, is important for many algorithms, among other for the computation of central character decomposition of a finitely presented module (cf. \cite{Lev05} for the theory and \cite{ALM09} for an example with Principal Intersection). In the situation, where the center of $A$ is generated by one element (which is not seldom), we can apply Algorithm \ref{PrincipalIntersect} to compute the intersection (known to be often quite nontrivial) without engaging much more expensive Gr\"obner basis computation, which use elimination.

\begin{example}
Consider the quantum algebra $U'_q(\mathfrak{so}_3)$ (as defined by Fairlie and Odesskii) for $q^2$ being the $n$-th root of unity. It is known, that then, in addition to the single generator $C$ of the center present over any field, three new elements $Z_i$, depending on $n$ will appear. Since $U'_q(\mathfrak{so}_3)$ has Gel'fand-Kirillov dimension $3$, four commuting elements in it obey a single polynomial algebraic dependency (the ideal of dependencies in principal). Computing such a dependency is a very tough challenge for Gr\"obner bases. But as we see, it is quite natural to apply Principal Intersection.

\begin{footnotesize}\begin{verbatim}
LIB "ncalg.lib"; LIB "bfun.lib";
def A = makeQso3(5);             // below Q^2 is the 5th root of unity
setring A; 
// central elements, depending on Q in their classical form: 
ideal I = x5+(Q3-Q2+2)*x3+(Q3-Q2+1)*x, y5+(Q3-Q2+2)*y3+(Q3-Q2+1)*y, 
 z5+(Q3-Q2+2)*z3+(Q3-Q2+1)*z;
I = twostd(I);                   // two-sided Groebner basis
poly C = 5*xyz+(4Q3-3Q2+2Q-1)*x2+(-Q3+2Q2-3Q+4)*y2+(4Q3-3Q2+2Q-1)*z2;
poly v = vec2poly(pIntersect(C,I),1); // present vector as poly
poly t = subst(v,x,C); t;        // t as a polynomial in C of size 42
==> 3125*x5y5z5+(3125Q3-3125Q2+6250)*x5y5z3+(3125Q3-3125Q2+6250)*x5y3z5+ ...
// present matrix of cofactors of t as an element of I:
matrix T = lift(I,t); 
poly a = 125*(25*I[1]*I[2]*I[3]+(Q3-7Q2+8Q-4)*(I[1]^2+I[2]^2+I[3]^2));
a-t;                             // a expresses t in the subalgebra gen. by I[1..3]
==> 0
// define univariate ring over algebraic extension:
ring r = (0,Q),c,dp; minpoly = Q4-Q3+Q2-Q+1;
poly v = fetch(A,v);             // map v from A to a univariate poly in c
factorize(v);
\end{verbatim}\end{footnotesize}

The latter factorization delivers the final touch to the answer: the algebraic dependency is described by the equation $C^{2} \cdot (C+4q^{3}-3q^{2}-3q+4)\cdot (C+3q^{3}-q^{2}-q+3)^{2} = 3125\cdot Z_1 Z_2 Z_3+125\cdot(q^3-7q^2+8q-4)\cdot (Z_1^2 + Z_2^2 + Z_3^2)$.
\end{example}

\section{Bernstein-Sato polynomial of $f$} \label{bspolyA}

\subsection{Global Bernstein-Sato polynomial}
\label{globalBS}

One possibility to define the Bernstein-Sato polynomial of a polynomial $f \in \K[x_1,\ldots,x_n]$ is to apply the global $b$-function for specific weights.

\begin{definition}
Let $b_{I_f,w}(s)$ denote the global $b$-function of the univariate Malgrange ideal $I_f$ of $f$ (cf. Section \ref{sAnn}) with respect to the weight vector $w = (1,0,\ldots,0) \in \R^{n+1}$, that is the weight of $\d_t$ is $1$. Then $b_f(s) = (-1)^{\deg(b_{I_f,w})} b_{I_f,w}(-s-1)$ is called the \emph{global $b$-function} (\emph{Bernstein-Sato polynomial}) of $f$.
\end{definition}

By Theorem \ref{b(s) is not zero}, $b_f(s) \neq 0$ holds. Moreover, it is known that all roots of $b_f(s)$ are negative rational numbers. Kashiwara proved this result for local Bernstein-Sato polynomials over $\C$ \cite{Kashiwara76/77}. This fact together with Theorem \ref{localBS} below and classical flatness properties imply the claim for the global case over an arbitrary field of characteristic $0$.

The following theorem gives us another option to define the Bernstein-Sato polynomial.

\begin{theorem}[\cite{Bernstein71}, see also {\cite[Lemma 5.3.11]{SST00}}] \label{bfct and ann}
The Bernstein-Sato polynomial $b_f(s)$ of $f$ is the unique monic polynomial of minimal degree in $\K[s]$ satisfying the identity
\[
	P \bullet f^{s+1} = b_f(s) \cdot f^s \qquad \text{for some operator } P \in D_n[s].
\]
\end{theorem}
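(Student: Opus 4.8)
The plan is to reconcile the definition of $b_f(s)$ given above, via the Malgrange ideal $I_f$ and its initial ideal, with the functional equation in the statement. As a first step I would recast the functional equation as a statement about a generator of an ideal. Work in the free rank-one module $M:=R_f[s]\cdot f^s$ over $R_f[s]:=\K[x_1,\ldots,x_n,f^{-1},s]$ carrying its usual left $D_n[s]$-action ($x_i$ by multiplication, $\partial_i\bullet(p\,f^s)=(\partial_i p+s\,f^{-1}(\partial_i f)\,p)\,f^s$), put $N:=D_n[s]\bullet f^s$, $N':=D_n[s]\bullet f^{s+1}\subseteq N$, and set
\[
B:=\bigl\{\,b\in\K[s]\ \bigm|\ \exists\,P\in D_n[s]:\ P\bullet f^{s+1}=b(s)\cdot f^s\ \text{in }M\,\bigr\}=\Ann_{\K[s]}(N/N').
\]
Because $P\bullet f^{s+1}$ is additive in $P$ and $\K[s]$-linear, $B$ is an ideal of the principal ideal domain $\K[s]$; hence, once $B\neq\{0\}$, it has a unique monic generator, which is automatically its unique monic element of least degree. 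So it remains to prove that $B$ is nonzero with monic generator $b_f(s)$.

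For that I would pass through the Malgrange ideal. Fix the dictionary between $W_{1+n}/I_f$ and $M$ given by $t\mapsto(\text{multiplication by }f)$, $t\partial_t\mapsto -s-1$, $x_i\mapsto x_i$, $\partial_i\mapsto\partial_i$, sending the cyclic class $[1]$ to $f^s$: it is compatible with the relations of $W_{1+n}$, and the two-step elimination of $\{t,\partial_t\}$ recalled in Section~\ref{sAnn} is exactly the statement that, modulo $I_f$, this identifies $D_n[s]$ and its cyclic module $N$ inside $W_{1+n}/I_f$, with $N'$ the $D_n[s]$-submodule generated by $t\cdot[1]$. For the weight $w=(1,0,\ldots,0)\in\R^{n+1}$ of the definition, $\partial_t$ has weight $1$, $t$ has weight $-1$ and all other variables have weight $0$, so $s=t\partial_t$ is $(-w,w)$-homogeneous of weight $0$; by the definition of the initial ideal, a weight-$0$ polynomial $p(t\partial_t)$ lies in $\inw(I_f)$ if and only if $p(t\partial_t)\equiv tQ\pmod{I_f}$ for some $Q\in W_{1+n}$, since every term of strictly negative $\partial_t$-weight lies in $t\,W_{1+n}$. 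Under the dictionary the right-hand side becomes $p(-s-1)\cdot f^s\in D_n[s]\bullet f\cdot f^s=N'$ — here one uses the elimination of $\{t,\partial_t\}$ to make the contribution of $Q$ land in $D_n[s]$ — and conversely. Hence $t\partial_t\leftrightarrow -s-1$ identifies $\inw(I_f)\cap\K[t\partial_t]$ with $B$; this intersection is nonzero by Theorem~\ref{b(s) is not zero} (applied to the holonomic ideal $I_f$), so $B\neq\{0\}$, and comparing monic generators together with the normalisation $b_f(s)=(-1)^{\deg(b_{I_f,w})}b_{I_f,w}(-s-1)$ shows that $b_f(s)$ is the monic generator of $B$, which is the assertion.

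The main obstacle is the middle step: making the dictionary between $W_{1+n}/I_f$ and $M$ — and the identification of $N,N'$ inside $W_{1+n}/I_f$ — fully precise, and in particular checking that the term $tQ$ arising from a negative-$\partial_t$-weight part really lands in $N'=D_n[s]\bullet f^{s+1}$ after the elimination of $\{t,\partial_t\}$ (and, on the Oaku--Takayama route, of the homogenising variables $u_i,v_i$, cf.\ Theorem~\ref{Theorem: initial ideal}), rather than in some larger submodule. This is precisely the compatibility of the $V$-filtration on $W_{1+n}$ with the passage to $\inw(I_f)$ and with elimination; the clean route is to isolate it as a lemma and invoke the $V$-filtration results of \cite{SST00} (Lemma~5.3.11 there being essentially the present statement), the other essential input being Bernstein's non-vanishing theorem, which we already have as Theorem~\ref{b(s) is not zero}.
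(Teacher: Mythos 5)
The paper gives no proof of this theorem at all: it is quoted from \cite{Bernstein71} and \cite[Lemma 5.3.11]{SST00}, so in effect you are proposing to reprove that lemma. Your skeleton is the standard one and is sound as far as it goes: $B=\{b\in\K[s]\mid b\cdot f^s\in D_n[s]\bullet f^{s+1}\}$ is an ideal of the principal ideal domain $\K[s]$, which settles uniqueness and minimal degree once its monic generator is identified; the non-vanishing indeed comes from Theorem \ref{b(s) is not zero} applied to the (holonomic) Malgrange ideal, in accordance with the paper's definition of $b_f$ via $b_{I_f,w}$.

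The genuine gap sits exactly at the translation step, which as written is both imprecise and circular. Your equivalence \doublequote{$p(t\partial_t)\in\inw(I_f)$ iff $p\equiv tQ \pmod{I_f}$} fails in the \doublequote{if} direction for arbitrary $Q$ (if $tQ$ has terms of weight $\geq 0$, the element $p-tQ$ of $I_f$ no longer has initial form $p$), and in the \doublequote{only if} direction it silently uses that a $(-w,w)$-homogeneous element of $\inw(I_f)$ is the initial form of a single element of $I_f$. More seriously, the point you yourself flag as the main obstacle --- that the remainder really acts into $N'=D_n[s]\bullet f^{s+1}$ --- is the actual content of the lemma, and you propose to close it by invoking \cite[Lemma 5.3.11]{SST00}, i.e.\ the statement being proved. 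The fix is elementary and must be carried out: write the negative-weight part with the powers of $t$ on the right, $r=\sum_{j\geq 1}Q_j(x,\partial_x,t\partial_t)\,t^j$ with $Q_j\in D_n[t\partial_t]$, so that $r\bullet f^s=\sum_j \big(Q_j(x,\partial_x,-s-1)f^{j-1}\big)\bullet f^{s+1}\in N'$, which yields $b_f\in B$. For minimality (every $b\in B$ divisible by $b_f$) your dictionary alone does not suffice, since it would need $I_f=\Ann_{W_{1+n}}(f^s)$, which you never establish; instead use the elimination statement quoted in Section \ref{sAnn}: from $P\bullet f^{s+1}=b(s)f^s$ one gets $Pf-b(s)\in\Ann_{D_n[s]}(f^s)$, hence under $s\mapsto -t\partial_t-1$ an element $\tilde P f-b(-t\partial_t-1)\in I_f$, and adding $\tilde P(t-f)\in I_f$ gives $\tilde P t-b(-t\partial_t-1)\in I_f$ whose initial form is $-b(-t\partial_t-1)$, so $b_{I_f,w}(t\partial_t)$ divides $b(-t\partial_t-1)$ and thus $b_f\mid b$. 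Without these two completions the proposal is a correct plan but not yet a proof.
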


Since $P \cdot f - b_f(s) \in \Ann_{D_n[s]}(f^s)$ holds, $b_f(s)$ is the monic polynomial satisfying
\begin{equation}\label{bernstein2}
\langle b_f(s) \rangle = \Ann_{D_n[s]}(f^s) + \langle f \rangle \cap \K[s].
\end{equation}

Summarizing, there are several choices for computing the Bernstein-Sato polynomial:
\begin{enumerate}
	\item Compute either\\
		(a) $J = \ini_{(-w,w)}(I_f)$ 
		or\\ 
		(b) $J = \ann_{D_n[s]}(f^s) + \langle f \rangle$.
	\item Intersect $J$ with $\K[s]$ by\\
		(a) the classical elimination-driven approach or\\
		(b) using Algorithm \ref{PrincipalIntersect}.
\end{enumerate}

It is very interesting to investigate the approach for the computation of \BS polynomial, which arises as the combination of the two methods:
\begin{enumerate}
	\item $\Ann_{D_n[s]}(f^s)$ via Brian\c{c}on-Maisonobe (cf. \cite{LM08}),
	\item $(\Ann(_{D_n[s]}f^s) + \langle f \rangle) \cap \K[s]$ via Algorithm \ref{PrincipalIntersect}.
\end{enumerate}

For an efficient computation of $\ini_{(-w,w)}(I_f)$ using the method of weighted homogenization as described in Section \ref{initial ideal}, Noro proposes \cite{Noro02} to choose the weights $\hat{u} = (\deg_u(f), u_1, \ldots, u_n )$, $\hat{v} = (1, \deg_u(f) - u_1 + 1, \ldots, \deg_u(f) - u_n + 1)$, such that the weight of $t$ is $\deg_u(f)$ and the weight of $\d_t$ is $1$. Here, $u \in \R^n_{>0}$ is an arbitrary vector and $\deg_u(f)$ denotes the weighted total degree of $f$ with respect to $u$. The vector $u$ may be chosen heuristically in accordance to the shape of $f$ or by default, one can set $u = (1,\ldots,1)$.

\subsection{Implementation}

For the computation of Bernstein-Sato polynomials, we offer the following procedures in the \textsc{Singular} library \texttt{bfun.lib}:

\texttt{bfct} 
computes $\inw(I_f)$ using weighted homogenization with weights $\hat{u},\hat{v}$ for an optional weight vector $u$ (by default $u = (1,\ldots,1))$ as described above, and then uses Algorithm \ref{PrincipalIntersect}, where the occurring systems of linear equations are solved by the procedure \texttt{linReduce}.

\texttt{bfctAnn}
computes $\Ann_{D_n[s]}(f^s)$ via Brian\c{c}on-Maisonobe and intersects $\Ann_{D_n[s]}(f^s) + \langle f \rangle$ with $\K[s]$ analogously to \texttt{bfct}.

\texttt{bfctOneGB}
computes the initial ideal and the intersection at once using a homogenized elimination ordering, a similar approach has been used in \cite{HH01}.

For the global $b$-function of an ideal $I\subset D_n$, \texttt{bfctIdeal} computes $\inw(I)$ using standard homogenization, \ie weighted homogenization where all weights are equal to $1$, and then proceeds the same way as \texttt{bfct}. Recall that $D_n/I$ must be holonomic as in \cite{SST00}.

All these procedures work as the following example illustrates for \texttt{bfct} and the hyperplane arrangement  $xyz(z-y)(y+z)$.

\begin{footnotesize}\begin{verbatim}
LIB "bfun.lib";
ring r = 0,(x,y,z),dp;    // commutative ring
poly f = x*y*z*(z-y)*(y+z);
list L = bfct(f);
print(matrix(L[1]));      // the roots of the BS-polynomial
==> -1,-5/4,-3/4,-3/2,-1/2
L[2];                     // the multiplicities of the roots above
==>  3,1,1,1,1
\end{verbatim}\end{footnotesize}

\subsection{Local \BS Polynomial}

Here we are interested in what kind of information one can obtain from the local $b$-functions for computing the global one and conversely. In order to avoid theoretical problems we will assume in
this paragraph that the ground field $\K=\C$.

Several algorithms to obtain the local $b$-function of a hypersurface $f$ have been known without any Gr\"obner bases computation but under some conditions on $f$. For instance, it was shown by Malgrange \cite{Malgrange75} that the minimal polynomial of $-\d_t t$ acting on some vector space of finite
dimension coincides with the reduced (local) Bernstein polynomial, assuming that the singularity is isolated.

The algorithms of Oaku \cite{Oaku97} used Gr\"obner bases for the first time. Recently, Nakayama presented some algorithms, which use the global $b$-function as a bound and obtain a local $b$-function by Mora resp. approximate division \cite{Nakayama09}, see also the work of Nishiyama and Noro \cite{NN10}.

\begin{theorem}[Brian\c con-Maisonobe (unpublished), Mebkhout-Narv\'aez \cite{MN98}]\label{localBS}
Let $b_{f,P}(s)$ be the local $b$-function of $f$ at the point $P\in\C^n$ and $b_f(s)$ the global one. Then it is verified that $b_f(s) = \lcm_{P\in\C^n} b_{f,P}(s) = \lcm_{P\in\Sigma(f)} b_{f,P}(s)$, where $\Sigma(f) = V(\langle f,\frac{\d f}{\d x_1}, \ldots, \frac{\d f}{\d x_n} \rangle)$ denotes the singular locus of $V(f)$.
\end{theorem}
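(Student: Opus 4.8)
The plan is to prove the two equalities $b_f(s) = \lcm_{P\in\C^n} b_{f,P}(s)$ and $\lcm_{P\in\C^n} b_{f,P}(s) = \lcm_{P\in\Sigma(f)} b_{f,P}(s)$ separately. For the first, I would recall that the local $b$-function $b_{f,P}(s)$ is by definition the monic generator of the ideal of polynomials $b(s)\in\C[s]$ such that there exists an operator $P_s\in D_{n,P}[s]$ (with coefficients in the localization $\C[x]_{\mathfrak m_P}$, or in the analytic local ring) satisfying $P_s\bullet f^{s+1} = b(s)\cdot f^s$ near $P$; equivalently, using \eqref{bernstein2}, $b_{f,P}(s)$ generates $\bigl(\Ann_{D_{n,P}[s]}(f^s) + \langle f\rangle\bigr)\cap\C[s]$. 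Since the global functional equation holds locally at every $P$, we immediately get $b_f(s)\in\langle b_{f,P}(s)\rangle$ for all $P$, hence $\lcm_P b_{f,P}(s)$ divides $b_f(s)$. For the reverse divisibility one argues that if $b(s):=\lcm_P b_{f,P}(s)$, then at each $P$ there is a local operator $P^{(P)}_s$ with $P^{(P)}_s\bullet f^{s+1} = b(s)\cdot f^s$; one then glues these local solutions into a global operator $P_s\in D_n[s]$ by a partition-of-unity / coherence argument on the sheaf of relevant operators, which shows $b(s)$ satisfies the global equation and hence is divisible by $b_f(s)$.

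For the second equality, the key observation is that at any smooth point $P$ of $V(f)$ — and at any point $P\notin V(f)$ — the local $b$-function is trivial. Indeed if $f(P)\neq 0$ then $f$ is a unit near $P$ and $b_{f,P}(s)=1$; if $P\in V(f)$ but $\mathrm{d}f(P)\neq 0$, then after an analytic change of coordinates $f$ becomes a coordinate function $x_1$, for which $\partial_1\bullet x_1^{s+1} = (s+1)\cdot x_1^s$, so $b_{f,P}(s) = s+1$. Since $s+1$ always divides $b_f(s)$ (it comes from a point in $\Sigma(f)$, or is forced anyway), dropping all smooth points from the $\lcm$ changes nothing, and $\Sigma(f)$ is precisely the set of points where $f=0$ and $\mathrm{d}f=0$, which is the complement of the smooth locus inside the zero set.

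The main obstacle is the gluing step in the first equality: making precise that finitely many local functional equations with a common right-hand side $b(s)$ assemble into a single global operator $P_s$. Here one uses that $\C^n$ is covered by finitely many of the $P$'s up to the stratification — more precisely, there are only finitely many distinct local $b$-functions as $P$ ranges over $\C^n$ (by constructibility / Noetherianity), so the $\lcm$ is a finite one — together with the fact that the sheaf $U\mapsto\{Q_s\in D_n(U)[s] : Q_s\bullet f^{s+1}\in b(s)f^s\cdot\mathcal O(U)\}$ is coherent and its sections over the affine space $\C^n$ surject onto the stalks. A partition of unity subordinate to the covering produces the global $P_s$; then $P_s\bullet f^{s+1} = b(s)\cdot f^s$ globally, so $b_f(s)\mid b(s)$, completing the argument. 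Alternatively one can invoke the known compatibility of the $b$-function with localization: $b_{f,P}(s)$ equals the $b$-function of the localized $D$-module $D_{n,P}[s]f^s$, and $\Ann_{D_n[s]}(f^s)\otimes\C[x]_{\mathfrak m_P} = \Ann_{D_{n,P}[s]}(f^s)$ by flatness, so \eqref{bernstein2} localizes and the $\lcm$ formula follows from the fact that an element of $\C[s]$ lies in an ideal $I\subset D_n[s]$ iff it lies in every localization $I\otimes\C[x]_{\mathfrak m_P}$.
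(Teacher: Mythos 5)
First, a point of comparison: the paper does not prove Theorem \ref{localBS} at all --- it quotes it as a known result of Brian\c{c}on--Maisonobe and Mebkhout--Narv\'aez \cite{MN98} --- so your argument can only be measured against the standard literature proof, not against anything in the text. Measured that way, your overall decomposition is right, and the route you sketch only as an ``alternative'' at the end is in fact the correct and essentially complete argument: localization is exact, so $\Ann_{D_n[s]}(f^s)$ localizes to $\Ann_{D_{n,P}[s]}(f^s)$, and a polynomial $b(s)$ lies in $\Ann_{D_n[s]}(f^s)+\langle f\rangle$ if and only if its class in the quotient, viewed as a $\C[x]$-module, vanishes in every localization at a maximal ideal; combined with the easy divisibility $b_{f,P}(s)\mid b_f(s)$ (localize the global functional equation), this yields the first equality and, as a bonus, the finiteness of the $\lcm$, since every $b_{f,P}$ divides $b_f$. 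By contrast, the mechanism you put forward first --- gluing the local operators by a partition of unity subordinate to a cover --- is not available in the algebraic category (there are no polynomial partitions of unity) and is unnecessary once the local--global principle above is in place; it should be dropped rather than leaned on.

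The genuine gap that remains is the identification of the two notions of ``local'': your divisibility statements are naturally proved for the algebraically localized annihilator over $\C[x]_{\mathfrak{m}_P}$, whereas $b_{f,P}(s)$ in Theorem \ref{localBS} (and in \cite{MN98}) is the analytic local $b$-function, defined over the ring of germs $\C\{x-P\}$ --- and your smooth-point computation (an analytic change of coordinates turning $f$ into $x_1$) only makes sense in that setting. The agreement of the algebraic-local and analytic-local $b$-functions requires a faithful-flatness comparison of the analytic and algebraic localizations (this is precisely part of the content of the cited work), so it cannot be waved through with ``or in the analytic local ring''. Finally, a boundary case: if $V(f)$ is nonempty and smooth, then $\Sigma(f)=\emptyset$ while $b_f(s)=s+1$, so the second equality needs either the hypothesis that $f$ has singular points or a separate remark; your parenthesis ``or is forced anyway'' is exactly where this should be made precise --- for every $P\in V(f)$ one has $(s+1)\mid b_{f,P}(s)$, so the smooth points of $V(f)$ contribute exactly $s+1$, which is absorbed by any point of $\Sigma(f)$ whenever $\Sigma(f)\neq\emptyset$.
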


\begin{remark}
Assume, that $\Sigma(f)$ consists of finitely many isolated singular points (the dimension of the corresponding defining ideal is $0$). Then the computation of the global $b$-function with Theorem \ref{localBS} becomes effective. Moreover, one needs just an algorithm for computing the local $b$-function of a hypersurface, having an isolated singularity at the origin.
\end{remark}

The {\sc Singular} library {\tt gmssing.lib}, developed and implemented by M. Schulze \cite{Schulze04}, contains the procedure {\tt bernstein}, which computes the local $b$-function at the origin. It returns the list of roots and corresponding multiplicities.

\begin{example}
Let $\mathcal{C}$ be the curve in $\C^2$ given by $f = (x^3-y^2)(3x-2y-1)(x+2y)$. This curve has three isolated singular points $p_1 = (0,0)$, $p_2 = (1,1)$ and $p_3 = (1/4,-1/8)$. 

\begin{footnotesize}\begin{verbatim}
LIB "gmssing.lib";
// note that one must use a local ordering for calling 'bernstein'
ring r = 0,(x,y),ds;   // ds stands for a local degrevlex ordering
poly f = (x^3-y^2)*(3x-2y-1)*(x+2y);
list L = bernstein(f); // local b-function at the origin p_1
print(matrix(L[1]));
==> -11/8,-9/8,-1,-7/8,-5/8
L[2];
==>    1,1,2,1,1
\end{verbatim}\end{footnotesize}

\noindent Moving to the corresponding points we also compute $b_{f,P_2}(s)$ and $b_{f,P_3}(s)$.
\begin{align*}
b_{f,P_1}(s) &= (s+1)^2 (s+5/8) (s+7/8) (s+9/8) (s+11/8)\\
b_{f,P_2}(s) &= (s+1)^2 (s+3/4) (s+5/4)\\
b_{f,P_3}(s) &= (s+1)^2 (s+2/3) (s+4/3)
\end{align*}
From this information and using Theorem \ref{localBS}, the global $b$-function is
\[
b_f(s)=(s+2/3) (s+5/8) (s+3/4) (s+7/8)  (s+1)^2  (s+4/3)  (s+5/4)  (s+9/8) (s+11/8).
\]
\end{example}

Moreover, {\tt gmssing.lib}, allows one to compute invariants related to the Gauss-Manin system of an isolated hypersurface singularity.

In the non-isolated case the situation is more complicated. For computing the local $b$-function in this case (which is important on its own) we suggest using two methods: Take the global $b$-function as an upper bound and a local version of the {\tt checkRoot} algorithm, see below. Another method is to use a local version of \texttt{principalIntersect}, which is under development. Despite the existence of many algorithms, the effectiveness of the computation of local $b$-functions is still to be drastically enhanced.

\subsection{Partial knowledge of \BS polynomial} \label{checkrootA}

As we have mentioned, several algorithms for computing the $b$-function associated with a polynomial have been known. However, in general it is very hard from computational point of view to obtain this polynomial, and in the actual computation a limited number of examples can be treated. For some applications only the integral roots of $b_f(s)$ are needed and that is why we are interested in obtaining just a part of the \BS polynomial.

Recall the algorithm {\tt checkRoot} for checking whether a rational number is a root of the $b$-function of a hypersurface from \cite{LM08}. Equation \eqref{bernstein2} was used to prove the following result.

\begin{theorem}(\cite{LM08})\label{checkRoot}
Let $R$ be a ring whose center contains $\K[s]$ as a subring. Let us consider $q(s)\in \K[s]$ a polynomial in one variable and $I$ a left ideal in $R$ satisfying $I\cap \K[s]\neq 0$. Then
$
  ( I + R \langle q(s) \rangle )\cap \K[s] = I\cap \K[s] + \K[s] \langle q(s) \rangle.
$
In particular, using the above equation \eqref{bernstein2}, we have
$$
  \big(\, \ann_{D_n[s]}(f^s)+ D_n[s]\cdot \langle f, q(s)\rangle \,\big) \; \cap \K[s] = \langle b_f(s),q(s)\rangle.
$$
As a consequence, let $m_\alpha$ be the multiplicity of $\alpha$ as a root of $b_f(-s)$ and let us consider the ideals $J_i = \ann_{D_n[s]}(f^s) + \langle f, (s+\alpha)^{i+1} \rangle \subseteq D_n[s]$, $i=0,\ldots,n$, then $[\, m_\alpha > i \Longleftrightarrow (s+\alpha)^i \notin J_i \,]$.
\end{theorem}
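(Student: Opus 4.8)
The plan is to prove the identity
$\big(\,\ann_{D_n[s]}(f^s)+R\langle q(s)\rangle\,\big)\cap\K[s]=\ann_{D_n[s]}(f^s)\cap\K[s]+\K[s]\langle q(s)\rangle$
in the abstract setting (with $R$ a ring whose center contains $\K[s]$ and $I\subset R$ a left ideal with $I\cap\K[s]\neq 0$), and then specialize. The inclusion $\supseteq$ is immediate: both $I\cap\K[s]$ and $\K[s]\langle q(s)\rangle$ are contained in $(I+R\langle q(s)\rangle)\cap\K[s]$. For $\subseteq$, take $p(s)\in(I+R\langle q(s)\rangle)\cap\K[s]$, so $p(s)=a+r\cdot q(s)$ with $a\in I$, $r\in R$ (using that $q(s)$ is central, $R\langle q(s)\rangle=R\cdot q(s)$). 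Let $b(s)$ be the monic generator of the principal ideal $I\cap\K[s]\subset\K[s]$, which is nonzero by hypothesis and central in $R$. Perform polynomial division in $\K[s]$: write $p(s)=h(s)\,b(s)+\rho(s)$ with $\deg\rho<\deg b$. Then $h(s)b(s)\in I\cap\K[s]$, so $\rho(s)=p(s)-h(s)b(s)\in I+R\langle q(s)\rangle$ as well. The key reduction step is to show $\rho(s)\in\K[s]\langle q(s)\rangle$, i.e. that $q(s)\mid\rho(s)$ in $\K[s]$; granting that, $p(s)=h(s)b(s)+\rho(s)\in (I\cap\K[s])+\K[s]\langle q(s)\rangle$, as desired.

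To see $q(s)\mid\rho(s)$: since $\rho(s)=a'+r'q(s)$ for some $a'\in I$, $r'\in R$, we get $a'=\rho(s)-r'q(s)\in I$. Passing to the quotient ring $\bar R=R/R\langle q(s)\rangle$ — which makes sense as a ring because $\langle q(s)\rangle$ is a two-sided (indeed central) ideal — the image $\bar a'=\overline{\rho(s)}$ lies in $\bar I$, the image of $I$. But $\overline{\rho(s)}$ is a polynomial in $\bar s$ of degree $<\deg b$ with $\K$-coefficients. I claim that any element of $\K[s]$ mapping into $\bar I$ is actually divisible by $b(s)$ modulo $q(s)$; more precisely, lift back: if $\rho(s)\bmod q(s)$ comes from $I$, then $\rho(s)+R\,q(s)$ meets $I$, hence $\rho(s)\in I+Rq(s)$, and iterating the division argument with $b(s)$ forces $\rho(s)\equiv c(s)\pmod{q(s)}$ for some $c(s)\in I\cap\K[s]$ of degree $<\deg b$ — but $I\cap\K[s]=\langle b(s)\rangle$ contains no nonzero element of degree $<\deg b$, so $c(s)=0$ and $q(s)\mid\rho(s)$. (Care is needed here to avoid circularity: the clean way is to argue directly that the ideal $(I+R\langle q(s)\rangle)\cap\K[s]$ of $\K[s]$ is generated by $\gcd$-type data, namely it contains $b(s)$ and $q(s)$, hence contains $\langle b(s),q(s)\rangle=\langle\gcd(b,q)\rangle$; and conversely modulo $R\langle q(s)\rangle$ it is forced into the image of $\langle b(s)\rangle$.) I expect this step — pinning down exactly the contribution of $I$ modulo $q(s)$ and showing it cannot produce polynomials outside $\langle b(s),q(s)\rangle$ — to be the main obstacle, and the right tool is to work in $\bar R=R/R\langle q(s)\rangle$ and use that $\K[s]/\langle q(s)\rangle$ sits centrally inside it with $\bar b(s)$ generating $\bar I\cap(\K[s]/\langle q(s)\rangle)$.

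For the ``in particular'' statement, apply the identity with $R=D_n[s]$ (whose center contains $\K[s]$), $I=\ann_{D_n[s]}(f^s)+D_n[s]\cdot\langle f\rangle$, and $q(s)$ as given; note $I\cap\K[s]=\langle b_f(s)\rangle\neq 0$ by equation \eqref{bernstein2} together with Theorem \ref{b(s) is not zero}. Since $R\langle f,q(s)\rangle=R\langle f\rangle+R\langle q(s)\rangle$, associativity of sums of ideals gives $\ann_{D_n[s]}(f^s)+D_n[s]\langle f,q(s)\rangle=I+R\langle q(s)\rangle$, and the identity yields $\big(\ann_{D_n[s]}(f^s)+D_n[s]\langle f,q(s)\rangle\big)\cap\K[s]=\langle b_f(s)\rangle+\langle q(s)\rangle=\langle b_f(s),q(s)\rangle$.

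Finally, for the consequence about multiplicities, fix $\alpha$ and set $q(s)=(s+\alpha)^{i+1}$, so that $J_i=\ann_{D_n[s]}(f^s)+\langle f,(s+\alpha)^{i+1}\rangle$ and, by the displayed equation, $J_i\cap\K[s]=\langle b_f(s),(s+\alpha)^{i+1}\rangle=\langle d(s)\rangle$ where $d(s)=\gcd(b_f(s),(s+\alpha)^{i+1})=(s+\alpha)^{\min(m_\alpha',\,i+1)}$ and $m_\alpha'$ is the multiplicity of $-\alpha$ as a root of $b_f$, equivalently the multiplicity $m_\alpha$ of $\alpha$ as a root of $b_f(-s)$. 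Then $(s+\alpha)^i\in J_i$ iff $(s+\alpha)^i\in\langle d(s)\rangle$ iff $d(s)\mid(s+\alpha)^i$ iff $\min(m_\alpha,i+1)\le i$ iff $m_\alpha\le i$. Hence $(s+\alpha)^i\notin J_i$ iff $m_\alpha>i$, which is the claim. (One must also observe that $(s+\alpha)^i\in J_i$ is equivalent to $(s+\alpha)^i\in J_i\cap\K[s]$, which is clear since $(s+\alpha)^i\in\K[s]$.)
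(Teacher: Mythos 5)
The final parts of your proposal (deducing the displayed equality from the general identity via equation \eqref{bernstein2} and Theorem \ref{b(s) is not zero}, and the gcd computation giving $m_\alpha>i\Longleftrightarrow(s+\alpha)^i\notin J_i$) are fine. The gap is in the core inclusion $(I+Rq(s))\cap\K[s]\subseteq I\cap\K[s]+\K[s]q(s)$: after dividing $p$ by the monic generator $b$ of $I\cap\K[s]$ you reduce to the claim that the remainder $\rho$ satisfies $q\mid\rho$, and this intermediate claim is simply false under the theorem's hypotheses. Take $R=\K[s]$, $I=\langle s\rangle$, $q=s-1$: then $1=s-(s-1)\in I+Rq$, so $p=\rho=1$, yet $q\nmid 1$; the theorem holds here only because the target is $\langle b\rangle+\langle q\rangle=\langle\gcd(b,q)\rangle$, not $\langle b\rangle+$ a $q$-divisibility condition on the remainder. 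Your attempted justification is moreover circular: the assertion that \doublequote{iterating the division argument} produces $c\in I\cap\K[s]$ with $\rho\equiv c\pmod q$ is exactly the statement $\rho\in(I\cap\K[s])+\K[s]q$ that has to be proved, the extra requirement $\deg c<\deg b$ is what turns it into the false conclusion $q\mid\rho$, and the parenthetical claim that $\bar b$ generates $\bar I\cap(\K[s]/\langle q\rangle)$ in $\bar R=R/Rq$ is again equivalent to the theorem rather than a tool for it (note also that $\K[s]/\langle q\rangle$ need not embed into $R/Rq$, since $\K[s]\cap Rq$ can be strictly larger than $\K[s]q$).

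The missing idea is to exploit that $b$ is central and lies in $I$, so the whole two-sided multiple $Rb$ lies in $I$; equivalently $b$ annihilates $R/I$. Concretely: write $g=\gcd(b,q)$, $b=gb_1$, $q=gq_1$. If $p\in(I+Rq)\cap\K[s]$, say $p-rq\in I$, multiply on the left by the central element $b_1$: then $b_1p=b_1(p-rq)+rq_1b\in I$, because $b_1(p-rq)$ is a left multiple of an element of $I$ and $rq_1b\in I$ since $b\in I$. Hence $b_1p\in I\cap\K[s]=\langle gb_1\rangle$, so $gb_1\mid b_1p$ in $\K[s]$ and therefore $g\mid p$, i.e. $p\in\langle b,q\rangle_{\K[s]}=I\cap\K[s]+\K[s]q$. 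This three-line argument (which uses the hypothesis $I\cap\K[s]\neq 0$ precisely through $b\neq 0$ and the exactness of $I\cap\K[s]=\langle b\rangle$) closes the gap; the rest of your write-up can then stand as is.
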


Once we know a system of generators of the annihilator of $f^s$ in $D_n[s]$, the last theorem provides an algorithm for checking whether a given rational number is a root of the $b$-function of $f$ and for computing its multiplicity, using Gr\"obner bases for differential operators.

This algorithm is much faster, than the computation of the whole Bernstein polynomial via Gr\"obner bases, because no elimination ordering is needed for computing a Gr\"obner basis of $J_i$, once one knows a system of generators of $\ann_{D_n[s]}(f^s)$. Also, the element $(s+\alpha)^{i+1}$, added as a generator, seems to simplify tremendously such a computation. Actually, when $i=0$ it is possible to eliminate the variable $s$ in advance and we can perform the whole computation in $D_n$. Let us see an example.

\begin{example}
Let $A$ be the matrix given by
$$
A = \left(\begin{array}{cccc}
x_1 & x_2 & x_3 & x_4\\
x_5 & x_6 & x_7 & x_8\\
x_9 & x_{10} & x_{11} & x_{12}
\end{array}\right).
$$
Let us denote by $\Delta_i$, $i=1,2,3,4$, the determinant of the minor resulting from deleting the $i$-th column of A, and consider $f= \Delta_1 \Delta_2 \Delta_3 \Delta_4$. The polynomial $f$ defines a non-isolated hypersurface in $\C^{12}$. Therefore, from \cite{Saito94} (see also \cite{Varcenko81}), the set of all possible integral roots of $b_f(-s)$ is $\{11,10,9,8,7,6,5,4,3,2,1\}$. It is known that $\ann_{D_n[s]}(f^s) = \ann^{(1)}_{D_n[s]}(f^s)$ (see Section \ref{logannA}) and this fact can be used to simplify the computation of the annihilator.

\begin{small}
\begin{footnotesize}\begin{verbatim}
LIB "dmod.lib";
ring R = 0,(x1,x2,x3,x4,x5,x6,x7,x8,x9,x10,x11,x12),dp;
matrix A[3][4] = x1,x2,x3,x4, x5,x6,x7,x8, x9,x10,x11,x12;
poly Delta1 = det(submat(A,1..3,intvec(2,3,4)));
...                                    // analogous for Delta2 ... Delta4
poly f = Delta1*Delta2*Delta3*Delta4;
def D = Sannfslog(f); setring D;       // logarithmic annihilator
poly f = imap(R,f); number alpha = 11;
checkRoot1(LD1,f,alpha);
==> 0
\end{verbatim}\end{footnotesize}
\end{small}

Using the algorithm {\tt checkRoot} we have proved that the minimal integral root of $b_f(s)$ is $-1$. This example was suggested by F.~Castro-Jim\'enez and J.~M.~Ucha for testing the Logarithmic Comparison Theorem.
A nice introduction to this topic can be found, for instance, in~\cite{Torrelli07}.

Let $g$ be the polynomial resulting from $f$ by substituting $x_1, x_2, x_3, x_4, x_5, x_9$ with $1$. One can show that $b_g(s)$ divides $b_f(s)$ (see \cite{LM10} for details). Using the {\tt checkRoot}
algorithm we have found that
$
  (s+1)^4 (s+1/2) (s+3/2) (s+3/4) (s+5/4)
$
is a factor of $b_g(s)$ and therefore a factor of $b_f(s)$.
\end{example}

\begin{remark}
Using the notation from Section \ref{bfctIdeal}, given a holonomic $D$-module $D/I$, it is verified that $(\ini_{(-w,w)}(I) + \langle q(s) \rangle )\cap \K[s] = \langle b_{I,w}(s),q(s)\rangle$, although Theorem \ref{checkRoot} cannot be applied, since $s=\sum w_i x_i \d_i$ does not commute with all operators. For some applications like integration and restriction the maximal and the minimal integral root of the $b$-function of $I$ with respect to some weight vector have to be computed, see \cite{SST00}. However, the above formula cannot be used to find the set of all integral roots, since no upper/lower bound exists in advance. For instance, as it was suggested by N.~Takayama, $I=\langle t\d_t + k\rangle$, $k\in\Z$ is $D_1$-holonomic and one has $\ini_{(-1,1)}(I)\cap \C[s] = \langle s+k\rangle$ with $s=t\d_t$. 
\end{remark}

We close this section by mentioning that there exist some well-known methods to obtain an upper bound
for the Bernstein-Sato polynomial of a hypersurface singularity once we know, for instance, an embedded resolution of such singularity \cite{Kashiwara76/77}. Therefore using this result by Kashiwara and the {\tt checkRoot} algorithm, it is possible to compute the whole Bernstein-Sato polynomial without elimination orderings, see Example 1 in \cite{LM08}. We investigate different methods in conjunction with the further development of the \texttt{checkRoot} family of algorithms in \cite{LM10}.

\section{Bernstein operator of $f$}

\label{Boperator}

We define the Bernstein-Sato polynomial $b_f(s)$ to be the monic generator of a principal ideal, hence it is unique. But the so-called $B$-operator $P(s) \in D_n[s]$ from Theorem \ref{bfct and ann} is not unique. 

\begin{proposition}
Let $G$ be a left Gr\"obner basis of $\Ann_{D_n[s]}(f^{s+1})$ and define a \textit{Bernstein operator} to be the result of the reduced normal form $\NF(P(s), G)$ of some $B$-operator $P(s)$. Then, for a fixed monomial ordering on $D_n[s]$, the Bernstein operator is uniquely determined. 
\end{proposition}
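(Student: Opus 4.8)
The plan is to reduce the statement to two facts: that any two $B$-operators differ by an element of $\Ann_{D_n[s]}(f^{s+1})$, and the standard property that, for a fixed monomial ordering and a fixed Gr\"obner basis $G$, the reduced normal form $\NF(\,\cdot\,,G)$ is a well-defined invariant of the coset modulo the left ideal ${}_{D_n[s]}\langle G\rangle$.

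First I would recall from Theorem~\ref{bfct and ann} that an operator $P(s)\in D_n[s]$ is a $B$-operator precisely when $P(s)\bullet f^{s+1} = b_f(s)\cdot f^s$, where $b_f(s)$ is the unique (fixed) Bernstein-Sato polynomial of $f$; here $f^s$ denotes the generator of the rank-one $D_n[s]$-module $\K[x_1,\ldots,x_n,f^{-1},s]\,f^s$ and $f^{s+1}=f\cdot f^s$. Given two $B$-operators $P_1(s)$ and $P_2(s)$, subtracting the corresponding identities yields $(P_1(s)-P_2(s))\bullet f^{s+1}=0$, that is, $P_1(s)-P_2(s)\in\Ann_{D_n[s]}(f^{s+1})$. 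Since $G$ is a left Gr\"obner basis of this ideal, $P_1(s)-P_2(s)\in{}_{D_n[s]}\langle G\rangle$, so $P_1(s)\equiv P_2(s)$ modulo ${}_{D_n[s]}\langle G\rangle$.

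Next I would invoke the defining property of Gr\"obner bases: $D_n[s]$ is a $G$-algebra for the chosen ordering, being a central polynomial extension of the Weyl algebra, so $\NF(\,\cdot\,,G)$ is independent of the reduction path and is $\K$-linear with kernel exactly ${}_{D_n[s]}\langle G\rangle$; equivalently, elements congruent modulo ${}_{D_n[s]}\langle G\rangle$ have equal reduced normal forms. Applying this to $P_1(s)$ and $P_2(s)$ gives $\NF(P_1(s),G)=\NF(P_2(s),G)$, and since the $B$-operators were arbitrary, the Bernstein operator is uniquely determined, proving the claim.

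I do not expect a genuine technical obstacle, as the argument is short; the only delicate points are conceptual. One is to use the correct annihilator: it must be $\Ann_{D_n[s]}(f^{s+1})$, not $\Ann_{D_n[s]}(f^s)$, because the difference of two $B$-operators annihilates $f^{s+1}$ rather than $f^s$. The other is to quote, rather than reprove, that the reduced normal form is a coset invariant for a fixed ordering and $G$; a fully self-contained version would reproduce the standard confluence argument that any two complete reductions of the same element coincide.
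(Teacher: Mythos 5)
Your proof is correct and follows essentially the same route as the paper: subtract the two defining identities to see that any two $B$-operators differ by an element of $\Ann_{D_n[s]}(f^{s+1})$, then use that the reduced normal form with respect to the fixed Gr\"obner basis $G$ is an invariant of the coset modulo ${}_{D_n[s]}\langle G\rangle$. Your explicit remark that the relevant annihilator is that of $f^{s+1}$ (not of $f^s$) and that coset invariance is the property being quoted just makes precise what the paper's proof states more tersely.
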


\begin{proof}
Suppose that there is another $Q(s) \in D_n[s]$, such that the identities
\[
P(s)f^{s+1} = b_f(s)f^s  \text{ and } Q(s)f^{s+1} = b_f(s)f^s \text{ hold in the module } D_n[s]/\Ann_{D_n[s]}(f^s).
\]
Then $(P(s)-Q(s))f^{s+1} = 0$, that is $P(s)-Q(s) \in \Ann_{D_n[s]} (f^{s+1})$. Hence, the set $\{ R(s) \in D_n[s] \mid R(s)f^{s+1} = b_f(s)f^s \}$ can be viewed as an equivalence class and we can take the reduced normal form of any such operator (with respect to $G$) to be the canonical representative of the class. Since the reduced normal form with respect to a fixed monomial ordering on $D_n[s]$ is unique, so is the Bernstein operator $\NF(R(s), G)$.
\end{proof}

Note, that we can obtain the left Gr\"obner basis of $\Ann_{D_n[s]} (f^{s+1})$ via substituting $s$ with $s+1$ in the left Gr\"obner basis of $\Ann_{D_n[s]} (f^{s})$.

One can compute the Bernstein operator from the knowledge of $\Ann_{D_n[s]} (f^{s})$ and $b_f(s)$ by the following methods.

\subsection{$B$-operator via lifting}

The algorithm \textsc{Lift}$(F,G)$ computes the transformation matrix, expressing the set of polynomials $G$ via the set $F$, provided $\langle G \rangle \subseteq \langle F \rangle$. It is a classical application of Gr\"obner bases.

\begin{lemma}
Suppose, that $\Ann_{D_n[s]} (f^s)$ is generated by $h_1,\ldots,h_m$ and $b_f(s)$ is known. The output of \textsc{Lift}$(\{f,h_1,\ldots,h_m\}, \{b_f(s)\})$ is the $1\times (m+1)$ matrix $(a,b_1,\ldots,b_m)$. Then 
a $B$-operator is computed as $P(s) = \NF(a, \Ann_{D_n[s]} (f^s))$.
\end{lemma}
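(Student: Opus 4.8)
The plan is to unwind the definition of \textsc{Lift} and the Bernstein-Sato identity together. First I would recall that by Theorem~\ref{bfct and ann}, there exists an operator $P_0(s) \in D_n[s]$ with $P_0(s) \bullet f^{s+1} = b_f(s) \cdot f^s$, which by the discussion around \eqref{bernstein2} is equivalent to $P_0(s) \cdot f - b_f(s) \in \Ann_{D_n[s]}(f^s)$; in other words $b_f(s) \in \langle f, h_1, \ldots, h_m \rangle$ where $h_1,\ldots,h_m$ generate $\Ann_{D_n[s]}(f^s)$. This membership is exactly the hypothesis $\langle G \rangle \subseteq \langle F \rangle$ needed to run \textsc{Lift}$(\{f, h_1,\ldots,h_m\},\{b_f(s)\})$, which therefore outputs a row $(a, b_1, \ldots, b_m)$ with
\[
b_f(s) = a \cdot f + \sum_{k=1}^m b_k h_k.
\]

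Next I would interpret this equation modulo $\Ann_{D_n[s]}(f^s)$. Since each $h_k$ annihilates $f^s$, applying both sides to $f^s$ gives $b_f(s) \cdot f^s = (a \cdot f) \bullet f^s = a \bullet f^{s+1}$, so $a$ is itself a $B$-operator. It then remains to check that replacing $a$ by its normal form $P(s) = \NF(a, \Ann_{D_n[s]}(f^s))$ still yields a $B$-operator: by definition of normal form, $a - P(s) \in \Ann_{D_n[s]}(f^s)$, hence $(a - P(s)) \bullet f^{s+1} = (a-P(s))\cdot f \bullet f^s = 0$ because $\Ann_{D_n[s]}(f^s)$ is a left ideal and $(a-P(s))f$ still annihilates $f^s$ (this uses that $q \bullet (f\cdot f^s) = (qf)\bullet f^s$, i.e.\ that multiplication by $f$ inside the operator corresponds to the module action). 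Therefore $P(s) \bullet f^{s+1} = a \bullet f^{s+1} = b_f(s)\cdot f^s$, so $P(s)$ is a $B$-operator, as claimed.

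The only mild subtlety — and the step I expect to require the most care — is the bookkeeping between the two annihilators $\Ann_{D_n[s]}(f^s)$ and $\Ann_{D_n[s]}(f^{s+1})$, and the fact that the claimed $B$-operator is $\NF(a,\Ann_{D_n[s]}(f^s))$ rather than $\NF(a,\Ann_{D_n[s]}(f^{s+1}))$. One must verify that $af$ annihilating $f^s$ is preserved when $a$ is reduced modulo $\Ann_{D_n[s]}(f^s)$, which follows since $\Ann_{D_n[s]}(f^s)$ is a left ideal and right multiplication by the central-up-to-the-module-action element $f$ maps it into $\Ann_{D_n[s]}(f^{s+1})$; concretely $J f \subseteq \Ann_{D_n[s]}(f^{s+1})$ for $J = \Ann_{D_n[s]}(f^s)$. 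Everything else is a direct translation of the defining property of \textsc{Lift} and of $\NF$, so no lengthy computation is involved.
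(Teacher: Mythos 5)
Your first two steps coincide with the paper's argument: by \eqref{bernstein2} one has $b_f(s)\in\langle f,h_1,\ldots,h_m\rangle$, so \textsc{Lift} applies and returns $(a,b_1,\ldots,b_m)$ with $b_f(s)=af+\sum_i b_ih_i$, and since each $h_i$ kills $f^s$, the first entry $a$ is a $B$-operator. The gap is in your third step. To conclude $P(s)\bullet f^{s+1}=a\bullet f^{s+1}$ you need $a-P(s)\in\Ann_{D_n[s]}(f^{s+1})$, equivalently $(a-P(s))\,f\in\Ann_{D_n[s]}(f^s)$; but you only know $a-P(s)\in\Ann_{D_n[s]}(f^s)$, and this is merely a \emph{left} ideal, so it is not closed under right multiplication by $f$. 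Both of your justifications are false. Take $f=x$ and $j=x\partial_x-s\in\Ann_{D_1[s]}(x^s)$: then $j\bullet x^{s+1}=x^{s+1}\neq 0$, so $\Ann_{D_n[s]}(f^s)\not\subseteq\Ann_{D_n[s]}(f^{s+1})$ and $jx\notin\Ann_{D_1[s]}(x^s)$, refuting \doublequote{$(a-P(s))f$ still annihilates $f^s$}; moreover $(jx)\bullet x^{s+1}=2x^{s+2}\neq 0$, so even your fallback inclusion $\Ann_{D_n[s]}(f^s)\cdot f\subseteq\Ann_{D_n[s]}(f^{s+1})$ fails (and it would not give what is needed anyway).

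The paper does not attempt this step: its proof stops at the observation that $a$ itself is a $B$-operator and then obtains the canonical (Bernstein) operator as $\NF(a,\Ann_{D_n[s]}(f^{s+1}))$, in line with the Proposition at the beginning of Section \ref{Boperator} and with the accompanying code, which reduces modulo \texttt{subst(LD,s,s+1)}, i.e.\ modulo $\Ann_{D_n[s]}(f^s)\mid_{s\mapsto s+1}=\Ann_{D_n[s]}(f^{s+1})$. The appearance of $\Ann_{D_n[s]}(f^s)$ in the lemma's statement is a slip, and your attempt to take it literally and to legitimize it by pushing $f$ through the annihilator cannot be repaired. Reducing $a$ modulo $\Ann_{D_n[s]}(f^{s+1})$ instead makes the verification immediate, since that ideal annihilates $f^{s+1}$ by definition, so the difference $a-\NF(a,\Ann_{D_n[s]}(f^{s+1}))$ kills $f^{s+1}$ and the identity $P(s)\bullet f^{s+1}=b_f(s)f^s$ is preserved.
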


\begin{proof}
Because of Equation \eqref{bernstein2}, if $(a,b_1,\ldots,b_m)$ is the output of \textsc{Lift} as in the
statement,
\[
b_f(s) = af + \sum_{i=1}^m b_i h_i \text{ holds},
\]
hence the first element of such a matrix is a $B$-operator. Thus, the Bernstein operator is obtained via $\NF(a, \Ann_{D_n[s]} (f^{s+1}))$.
\end{proof}

However, we have to mention, that the \textsc{Lift} procedure is quite expensive in general.

Note that another method for the computation of a $B$-operator using lifting techniques is given by applying Algorithm~8 of \cite{NN10} with $a(x)=1$.

\subsection{$B$-operator via kernel of module homomorphism}

1. Consider the $D_n[s]$-module homomorphism
\[
\varphi: D_n[s] \lra D_n[s]/ (\Ann_{D_n[s]} (f^s) + \langle b_f(s) \rangle), \quad 1 \mapsto f,
\]
then for $u\in\ker \varphi$, $uf \in \Ann_{D_n[s]} (f^s) + \langle b_f(s) \rangle$. That is, there exist $a, b_i \in D_n[s]$, such that
\[
uf = a b_f(s) + \sum_{i=1}^m b_i h_i,
\]
However, we are interested in such $u$, that $a\in\K$. This is possible, but the 2nd method above proposes a more elegant solution. Also one has to say, that in this case we have to compute a Gr\"obner basis of $\Ann_{D_n[s]} (f^s) + \langle b_f(s) \rangle$ as an intermediate step and also the kernel of a module homomorphism with respect to the latter. This combination is, in general, quite nontrivial to compute. In the Gr\"obner basis computation a monomial ordering, preferring $x,\d x$ over $s$ seems to be better because of numerous applications of the Product Criterion.

\smallskip\noindent
2. Consider the $D_n[s]$-module homomorphism
\[
\vartheta: D_n[s]^2 \lra D_n[s]/ \Ann_{D_n[s]} (f^s), \quad \epsilon_1 \mapsto b_f(s), \epsilon_2 \mapsto f.
\] 
Then $\Ker \vartheta = \{ (u,v)^T \in D_n[s]^2 \mid u b_f(s) + v f \in \Ann_{D_n[s]} (f^s) \}$ is a submodule of $D_n[s]^2$. Indeed, $\ker \vartheta$ has many generators. In order to get a vector of the form $(k,u(s))$ for $k\in\K$, we perform another Gr\"obner basis computation for a submodule with respect to a module monomial ordering, giving preference to the first component over the second one. Since in the reduced basis there is a single element of the form $(k,v(s))\subset \Ker \vartheta$ with $k\not=0$, it follows that $P(s) = v(s) k^{-1}$.

This algorithm is implemented in \texttt{dmod.lib} as \texttt{operatorModulo}. The approach via lifting is used in the procedure \texttt{operatorBM}, which computes all the Bernstein data. The procedures can be used as follows.

\begin{example}
Consider the Reiffen curve $f = x^2 + y^3 + x y^2 \in \K[x,y]$. At first we use \texttt{operatorBM} and compare the length of an $B$-operator computed via lift to the length of the Bernstein operator.

\begin{footnotesize}\begin{verbatim}
LIB "dmod.lib";
ring r = 0,(x,y),dp;
poly F = x^2 + y^3 + x*y^2; 
def A = operatorBM(F); setring A;
size(PS);                            // size of B-operator
==> 238
ideal LD2 = subst(LD,s,s+1);         // Ann(F^{s+1})
LD2 = groebner(LD2);                 // LD is not a Groebner basis
poly PS2 = NF(PS,LD2); size(PS2);    // size of Bernstein operator
==> 41
\end{verbatim}\end{footnotesize}

\noindent So, computing with lifting potentially computes much longer operators. Let us compare with \texttt{operatorModulo}.

\begin{footnotesize}\begin{verbatim}
poly PS3 = operatorModulo(F,LD,bs); size(PS3); 
==> 41
\end{verbatim}\end{footnotesize}

\noindent The size of the operator, returned by \texttt{operatorModulo} need not be minimal (\eg by disabling some of the interactive options of \textsc{Singular} one can get a polynomial of length 50 in this example), but it is in general much shorter, than the one, delivered by \texttt{operatorBM}. Let us check the main property of the $B$-operator and print its highest terms:

\begin{footnotesize}\begin{verbatim}
NF(PS3*F - bs, subst(LD2,s,s-1)); 
==> 0
108*PS2;                            // i.e. the Bernstein operator
==> 6*x*Dx^2*Dy+9*y*Dx^2*Dy-2*x*Dx*Dy^2-4*y*Dx*Dy^2-y*Dy^3+ ...
\end{verbatim}\end{footnotesize}

\noindent In the last line we see the terms of highest degree with respect to $\d x, \d y$.
\end{example}

\subsection{Gr\"obner free method}

As a consequence of Theorem \ref{bfct and ann}, one obtains that $P \cdot f - b_f(s) \in \Ann_{D_n[s]}(f^s)$ and $P, b_f(s) \notin \Ann_{D_n[s]}(f^s)$. If we fix an ordering such that $b_f(s) = \NF(b_f(s), \Ann_{D_n[s]}(f^s))$ holds, we may rewrite this relation to $b_f(s) = \NF(P \cdot f, \Ann_{D_n[s]}(f^s))$. Hence, we can compute $P$ by searching for a linear combination of monomials $m \in D_n[s]$ that satisfy this equality when multiplied with $f$ from the right side. Using the results from the beginning of this section, one only needs to consider monomials which span $D_n/\Ann_{D_n[s]}(f^{s+1})$ as $\K$-vector space. We get the following algorithm.

\begin{algorithm}\label{BOperator} ~
\begin{algorithmic}
	\REQUIRE $f \in \K[x_1,\ldots,x_n]$, the Bernstein-Sato polynomial $b_f(s)$ of $f$
	\ENSURE $P \in D_n[s]$, the Bernstein operator of $f$
	\STATE $d := 0$
	\LOOP
		\STATE $M_d := \{ m \in D_n[s] \mid m \text{ monomial}, \deg(m) \leq d, \lm(p) \nmid m \,\forall\, p \in \Ann_{D_n[s]}(f^{s+1}) \}$
		\IF{there exist $a_m \in \K$ such that
			$b_f(s) = \sum_{m \in M_d} a_m \NF(m \cdot f,\Ann_{D_n[s]}(f^s))$}
			\RETURN $P := \sum\limits_{m \in M_d} a_m m - b_f(s)$
  	\ELSE \STATE $d := d+1$
 		\ENDIF
	\ENDLOOP
\end{algorithmic}
\end{algorithm}

The search for the coefficients $a_m$ can be done using \texttt{linReduce} (cf. Algorithm \ref{PrincipalIntersect}) as one is in fact looking for a linear dependency between the Bernstein-Sato polynomial and the elements $m \cdot f$ in the vector space $D_n[s] / \Ann_{D_n[s]}(f^s)$. 

\begin{remark}
Note, that Algorithm \ref{BOperator} can be extended to one searching for both $B$-operator and \BS polynomial simultaneously. We have to mention, that both algorithms of this kind are well suited for the search of operators and \BS polynomials in the case, when both of them are of relatively low total degree.
\end{remark}

\subsection{Computing integrals and zeta functions}

Given a simplex $C\subset\K^n$ (for $\K=\R,\C$) and $f\in \K[x_1,\ldots,x_n]$, we can define $\zeta(s) := \int_C f(x)^s dx$. Since $P(s)\bullet f^{s+1} = b_f(s)f^s$, we obtain
\[
\zeta(s) = \int_C f(x)^s dx = \frac{1}{b_f(s)} \int_C P(s)\bullet f(x)^{s+1} dx
\]
expanding the latter with \eg the chain rule, we come to an in general inhomogeneous recurrence relation for $\zeta(s)$, which involves coefficients in $\K[s]$. Since $P(s)$ is globally defined (and is, of course, independent on $C$), one can obtain a generic formula for all integrals of this type.

\begin{example}
Let $f = x^2-x \in \K[x]$. Then the Bernstein operator reads as $P(s) = (2x-1)\d_x - 4(s+1)$
and $b_f(s)=s+1$. Any simplex in $\K^1$ is the interval $[a,b]=:C$.
\[
\zeta(s) = \int_C f(x)^s dx = \frac{1}{s+1} \int_C ((2x-1)\d_x - 4(s+1))\bullet f(x)^{s+1} dx
\]
\[
= \frac{1}{s+1} \int_C (2x-1)\d_x \bullet f(x)^{s+1} dx - 4 \zeta(s+1)
\]
By the chain rule, $\int_C (2x-1) (\d_x \bullet f(x)^{s+1}) dx = (2x-1)f(x)^{s+1}\mid_C - 2\int_C f(x)^{s+1}) dx$, hence
\[
\zeta(s) = \frac{1}{s+1} \cdot (2x-1)f(x)^{s+1}\mid_C - \frac{2}{s+1}\zeta(s+1)  - 4 \zeta(s+1),
\]
and thus
\[
(4s+6) \zeta(s+1) + (s+1) \zeta(s) = (2b-1)(b^2-b)^{s+1} - (2a-1)(a^2-a)^{s+1}
\]
The right hand side, say $R(s)$, satisfies the homogeneous recurrence $R(s+2) - (a^2-a+b^2-b) R(s+1) + (a^2-a)(b^2-b) R(s) = 0$ of order 2. Substituting the left hand side into it, we obtain a
homogeneous recurrence with polynomial coefficients of order 3:
\begin{gather*}
(a^2-a)(b^2-b)(s+1) \zeta(s)
- ( (s+2)(a^2-a+b^2-b) - (4s+6)(a^2-a)(b^2-b)) \zeta(s+1)\\
- ( (4s+10)(a^2-a+b^2-b) - (s+3) ) \zeta(s+2)
+ (4s+14) \zeta(s+3) 
= 0.
\end{gather*}

To guarantee the uniqueness of a solution to this equation, we need to specify 3 initial values, which can be easily done. However, such recurrences very seldom admit a closed form solution, thus most information about $\zeta(s)$ is contained in the recurrence itself.
\end{example}

\section{Logarithmic annihilator of $f$} \label{logannA}

Given a polynomial $f\in\K[x]=\K[x_1,\ldots,x_n]$, consider the left ideal $\ann^{(1)}_{D_n[s]}(f^s) \subseteq D_n[s]$ generated by those differential operators $P(s)\in D_n[s]$ of total order (in the partials) less than or equal to one, which annihilate $f^s$. This ideal is clearly contained in $\ann_{D_n[s]}(f^s)$ and can be generated by elements of the form
$$
  P(s) = a_0(x,s) + a_1(x,s) \d_{x_1} + \cdots + a_n(x,s) \d_{x_n} \in D_n[s],
$$
where $(a_0,a_1,\ldots,a_n)\in \syz_{\K[x,s]}(f,s\frac{\d f}{\d x_1}, \cdots,s\frac{\d f}{\d x_n})$. Therefore, for each $f\in \K[x]$ one can compute, by using Gr\"obner bases in $\K[x,s]$, a system of generators of $\ann^{(1)}_{D_n[s]}(f^s)$. The corresponding procedure in \texttt{dmod.lib} is called \texttt{Sannfslog}. Let us see the Reiffen curve $f=x^4+y^5+xy^4$ as an example with {\sc Singular}.

\begin{footnotesize}\begin{verbatim}
LIB "dmod.lib";
ring R = 0,(x,y),dp;
poly f = x^4+y^5+x*y^4;
def A = Sannfslog(f); setring A; LD1;
==> LD1[1]=4*x^2*Dx+5*x*Dx*y+3*x*y*Dy-16*x*s+4*y^2*Dy-20*y*s
==> LD1[2]=16*x*Dx*y^2-125*x*Dx*y-4*x^2*Dy+4*Dx*y^3+5*x*y*Dy+12*y^3*Dy-100*y^2*Dy
 -64*y^2*s+500*y*s
// now we compute the whole annihilator with Sannfs and compare
setring R; def B = Sannfs(f); setring B;
map F = A,x,Dx,y,Dy,s;
ideal LD1 = F(LD1);
LD1 = groebner(LD1);
simplify( NF(LD,LD1), 2); 
==> _[1]=36*y^3*Dx^2-36*y^3*Dx*Dy+1125/4*x*y*Dx^2-315/4*x*y*Dx*Dy+ ...
\end{verbatim}\end{footnotesize}

\noindent And the latter polynomial is not an element of $\ann^{(1)}_{D_n[s]}(f^s)$ but of $\ann^{(2)}_{D_n[s]}(f^s) = \ann_{D_n[s]}(f^s)$.

\subsection{The annihilator up to degree $k$}

More generally, for a given $k\geq 1$ one can consider the left ideal $\ann^{(k)}_{D_n[s]}(f^s) \subseteq D_n[s]$ generated by the differential operators $P(s)\in D_n[s]$ of total order less than or equal to $k$, such that $P(s)$ annihilate $f^s$. The tower of ideals
$$
  \ann^{(1)}_{D_n[s]} (f^s) \subsetneq \cdots
  \subsetneq \ann^{(k_0)}_{D_n[s]} (f^s) = \ann_{D_n[s]} (f^s)
$$
has been recently studied by Narv\'aez in \cite{Narvaez08}. It is an open problem to find the minimal integer $k_0$ satisfying the above condition without computing the whole annihilator.

Computationally the annihilator up to degree $k$ can be obtained using Gr\"obner bases in $\K[x,s]$ as follows. Consider $P(s) = \sum_{|\beta|\leq k} a_\beta \d^{\beta} \in\ann^{(k)}_{D_n[s]}(f^s)$ and let $g_\beta(x,s) \in \K[x,s]$ be the polynomial defined by the formula $\d^\beta\cdot f^s = g_\beta \cdot f^{s-|\beta|}$. Then $(a_\beta)_{|\beta|\leq k} \in \syz (g_\beta f^{k-|\beta|})_{|\beta|\leq k}$. Eventually, the polynomials $g_\beta(x,s)$ can be computed using the expression given in Lemma \ref{formulaFS}. 

Given $\beta=(\beta_1,\ldots,\beta_n)\in\N^n$, as usual $|\beta| = \beta_1+\cdots+\beta_n$, $\beta! = \beta_1! \cdots \beta_n!$ and $\Delta_x^\beta = \frac{1}{\beta!} \d_x^\beta$. A {\em partition} of $\beta$ is a way of writing $\beta$ as a sum of integral vectors with non-negative entries. Two sums which only differ in the order of their summands are considered to be the same partition. If $\beta = \sigma_1+\ldots+\sigma_k$ with $\sigma_i\neq 0$, then $\sigma$ is said to be a partition of {\em length} $k$. The set of all partitions of $\beta$ (resp. of length $k$) is denoted by $\mathcal{P}(\beta)$ (resp. $\mathcal{P}(\beta; k)$). Obviously $\mathcal{P}(\beta) = \cup_{k=1}^{|\beta|} \mathcal{P}(\beta; k)$. Finally, we write $\ell(\sigma) := (\ell_{\sigma\tau})_{\tau}$, where $\ell_{\sigma\tau}$ is the number of times that $\tau$ appears in~$\sigma$.

\begin{lemma}\label{formulaFS}
Using the above notation for all non-zero $\beta\in\N^n$ we have,
\begin{equation*}
\begin{split}
  \Delta_x^{\beta} \cdot f^s = & 
\sum_{k=1}^{|\beta|} \binom{s}{k} \sum_{\sigma\in\mathcal{P}(\beta; k)}
  \frac{1}{\ell(\sigma)!}\ \Delta_x^{\sigma_1}(f)\cdots \Delta_x^{\sigma_k}(f)
  \cdot f^{s-k}\,.
\end{split}
\end{equation*}
\end{lemma}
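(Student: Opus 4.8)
The plan is to derive the identity from the multivariate Taylor formula together with the binomial series, which keeps all the analytic content trivial and isolates the real work in a combinatorial reindexing. Recall that for a polynomial $h$ and a vector $y=(y_1,\dots,y_n)$ of fresh commuting indeterminates one has the exact expansion $h(x+y)=\sum_{\beta\in\N_0^n}(\Delta_x^{\beta}h)(x)\,y^{\beta}$, so the divided powers $\Delta_x^{\beta}$ are precisely the Taylor coefficients of $h$. The one preliminary point is the meaning of $f^s$. Both sides of the asserted equality are of the form $P(x,s)\,f^{s-|\beta|}$ with $P\in\K[x][s]$ — on the left, $P$ is essentially the polynomial $g_{\beta}$ introduced just above the lemma, and on the right one simply pulls the factor $f^{|\beta|-k}$ out of each $f^{s-k}$, legitimate since $k\le|\beta|$ — so, after cancelling the common power of $f$, the claim becomes an equality of two elements of $\K[x][s]$. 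Such an equality may be verified by specializing $s$ to an arbitrary integer $N\ge|\beta|$, in which case every quantity occurring is an honest polynomial in $x$ and $\Delta_x^{\beta}(f^{N})$ is computed by the ordinary power rule.

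For $s=N$ I would proceed as follows. Write $f(x+y)=f(x)\bigl(1+\psi\bigr)$, where $\psi:=f^{-1}\sum_{\gamma\neq 0}(\Delta_x^{\gamma}f)\,y^{\gamma}$ is a polynomial in $y$ over $\K[x,f^{-1}]$ with no constant term; then by the binomial theorem $f(x+y)^{N}=f^{N}\sum_{k\ge 0}\binom{N}{k}\psi^{k}$, and
\[
\psi^{k}=f^{-k}\Bigl(\sum_{\gamma\neq 0}(\Delta_x^{\gamma}f)\,y^{\gamma}\Bigr)^{k}=f^{-k}\!\!\sum_{\sigma_1,\dots,\sigma_k\neq 0}\Bigl(\prod_{i=1}^{k}\Delta_x^{\sigma_i}f\Bigr)\,y^{\sigma_1+\dots+\sigma_k}.
\]
Comparing the coefficient of $y^{\beta}$ on the two sides of $f(x+y)^{N}=\sum_{\beta}(\Delta_x^{\beta}f^{N})\,y^{\beta}$ yields
\[
\Delta_x^{\beta}(f^{N})=\sum_{k=1}^{|\beta|}\binom{N}{k}\,f^{N-k}\!\!\sum_{\substack{(\sigma_1,\dots,\sigma_k):\ \sigma_i\neq 0\\ \sigma_1+\dots+\sigma_k=\beta}}\ \prod_{i=1}^{k}\Delta_x^{\sigma_i}(f).
\]
It remains only to replace the sum over ordered $k$-tuples by a sum over the unordered partitions $\sigma\in\mathcal{P}(\beta;k)$: the product $\prod_{i}\Delta_x^{\sigma_i}(f)$ depends only on the multiset $\sigma$, and the number of ordered tuples representing a given $\sigma$ is the multinomial coefficient $k!/\ell(\sigma)!$. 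Collecting terms accordingly and re-introducing the indeterminate $s$ produces the stated formula.

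The main obstacle is precisely this last reindexing together with the attendant normalization bookkeeping: one has to count orderings of a multiset of nonzero integral vectors correctly and reconcile that count with the conventions for $\Delta_x^{\bullet}$, for $\binom{s}{k}$ and for $\ell(\sigma)!$, in particular keeping careful track of where the factor $k!$ lands. Everything else is formal. An entirely self-contained alternative is induction on $|\beta|$: using $\Delta_x^{\beta+e_i}=\tfrac{1}{\beta_i+1}\,\partial_i\circ\Delta_x^{\beta}$, apply $\partial_i$ to the inductive expression by the Leibniz rule, differentiate each factor via $\partial_i\bigl(\Delta_x^{\sigma}f\bigr)=(\sigma_i+1)\,\Delta_x^{\sigma+e_i}f$ and the power of $f$ via $\partial_i(f^{s-k})=(s-k)\,f^{s-k-1}\,\partial_i f$, and then check that the resulting coefficients reorganize into the ones indexed by $\mathcal{P}(\beta+e_i;k)$ and $\mathcal{P}(\beta+e_i;k+1)$; this uses the same partition-multiplicity bookkeeping, but in a heavier incremental form, so I would prefer the generating-function route.
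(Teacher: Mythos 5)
Your generating-function route is legitimate and genuinely different from what the paper offers: the paper gives no written argument beyond attributing the formula to Narv\'aez, asserting that it can be proved by induction on $|\beta|$, and pointing to the propositions of Mebkhout and Mebkhout--Narv\'aez, so a Taylor-expansion proof that reduces everything to comparing coefficients of $y^{\beta}$ in $f(x+y)^{N}$ for integers $N\geq|\beta|$ is a perfectly reasonable substitute (the induction you sketch at the end is essentially the route the paper has in mind). The reduction to integer exponents is also sound: both sides have the form $P(x,s)\,f^{s-|\beta|}$ with $P\in\K[x,s]$, and a polynomial identity in $s$ verified at infinitely many integers holds identically.

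The gap sits exactly at the step you yourself single out as the main obstacle and then skip. Comparing coefficients of $y^{\beta}$ and passing from ordered $k$-tuples to unordered partitions gives, for the partition $\sigma$ with multiplicity vector $\ell(\sigma)$, the coefficient $\binom{s}{k}\cdot\frac{k!}{\ell(\sigma)!}=\frac{s(s-1)\cdots(s-k+1)}{\ell(\sigma)!}$ in front of $\Delta_x^{\sigma_1}(f)\cdots\Delta_x^{\sigma_k}(f)\,f^{s-k}$, not $\binom{s}{k}\cdot\frac{1}{\ell(\sigma)!}$ with the ordinary binomial coefficient. So ``collecting terms accordingly \ldots\ produces the stated formula'' is not justified as written: your derivation matches the lemma only if the symbol $\binom{s}{k}$ in the statement is read as the falling factorial $s(s-1)\cdots(s-k+1)$, i.e.\ only after the factor $k!$ you were worried about has been accounted for. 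With the usual convention the displayed identity is off by $k!$ already for $n=1$, $f=x$, $\beta=2$: the left side is $\tfrac{1}{2}s(s-1)x^{s-2}$, while the right side as printed would be $\tfrac{1}{4}s(s-1)x^{s-2}$. To complete the proof you must actually carry out this bookkeeping and state explicitly which normalization of $\binom{s}{k}$ (or which corrected constant, namely $k!/\ell(\sigma)!$) makes the two sides agree; as it stands, the final sentence of your main argument asserts precisely the reconciliation that was never verified.
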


This formula was suggested by Narv\'aez and can be proved by induction on $|\beta|$. Similar expressions appear in \cite[Prop. 5.3.5]{MN98} and \cite[Prop. 2.3.2]{Mebkhout97}.

However, despite this almost closed form, the set of polynomials, between which we have to compute syzygies, is growing fast and the size of polynomials increases. This results in quite hard computations even with the mentioned enhancements.

\section{Bernstein-Sato ideals for $f = f_1 \cdot \ldots \cdot f_m$} \label{BMI}

Using the results from \cite{GHU05}, which we confirmed through intensive testing (cf. \cite{LM08}), it follows, that the method by \BM is the most effective one for the computation of $s$-parametric annihilators where $f = f_1$. Because of the structure of annihilators in the situation $f = f_1 \cdots f_p$, $p>1$, basically the same principles stand behind the corresponding algorithms. Hence, we decided to implement only \BM's method for the $s$-parametric annihilator $\Ann_{D_n[s]}(f^s) \subset D_n[s]$, where $s = (s_1,\ldots,s_p)$. The corresponding procedure in \texttt{dmod.lib} is called \texttt{annfsBMI}. It computes both $\Ann_{D_n[s]} f^s \subset D_n[s]$ and the Bernstein-Sato ideal in $\K[s]$, which is defined as 
\[
\mathcal{B}(f) = 
(\Ann_{D_n[s_1,\ldots,s_p]}(f_1^{s_1} \cdots f_p^{s_p}) + \langle f_1 \cdots f_p \rangle ) \cap \K[s_1,\ldots,s_p].
\]

In contrary to the case $f=f_1$, in general the ideal $\mathcal{B}(f)$ need not be principal. However, it is an open question to give a criterion for the principality of $\mathcal{B}(f)$. Armed with such a criterion, one can apply a generalization of the method of Principal Intersection \ref{PrincipalIntersect} to multivariate subalgebras \cite{ALM10} and thus replace expensive elimination above by the computation of a minimal polynomial. Otherwise we still can apply the Principal Intersection, which, however, will deliver only one polynomial to us. As in the case $f=f_1$ it is an open question, which strategy and which orderings should one use in the computation of the annihilator and of the \BS ideal in order to achieve better performance.

We reported in \cite{LM08} on several challenges, which have been solved with the help of our implementation. Namely, the products $(x^3+y^2)(x^2+y^3)$ and $(x^2+y^2+y^3)(x^3+y^2)$ give rise to principal \BS ideals.

\begin{example}
Let us consider the following example from \cite{RT10}, which is quite challenging to compute indeed.

\begin{footnotesize}\begin{verbatim}
LIB "dmod.lib"; ring r = 0,(x,y,z),dp;
ideal F = z, x^5 + y^5 + x^2*y^3*z;
def A = annfsBMI(F); setring A;
LD; // prints the annihilator in D[s1,s2] 
BS; // prints the Bernstein-Sato ideal
\end{verbatim}\end{footnotesize}

\noindent We do not show the output here because of its size. But from the output one can see, the Bernstein-Sato ideal is of dimension 1 in $D_3[s_1,s_2]$ since its Gr\"obner basis consists of three elements $\{15625 s_1 s_2^8 + 17 \text{ l.o.t.}, 3125 s_1^2 s_2^7 + 24 \text{ l.o.t.}, 625 s_1^5 s_2^6 + 42 \text{ l.o.t.}\}$. Notably, every generator factorizes into linear factors and each factor involves either $s_1$ or $s_2$, which happens quite seldom in general.
\end{example}

In general, quite a little is known about \BS ideals. Their dimensions, principality, factorization of generators and primary decomposition constitute open problems from the theoretical side.

\section{Bernstein-Sato polynomial for a variety} \label{bsvarA}

Now we proceed to the construction of the Bernstein-Sato polynomial of an affine algebraic variety. We refer to \cite{BMS06} for the details of the complete construction for arbitrary varieties and to \cite{ALM09} for the details about annihilator-driven algorithms.

Given two positive integers $n$ and $r$, for the rest of this section we fix the indices $i,j,k,l$ ranging between $1$ and $r$ and an index $m$ ranging between $1$ and $n$.Let $f=(f_1,\ldots,f_r)$ be an $r$-tuple in $\K[x]^r$. Here, $s=(s_1,\ldots,s_r)$, $\frac{1}{f} = \frac{1}{f_1\cdots f_r}$ and $f^s = f_1^{s_1}\cdots f_r^{s_r}$. Let us denote by $\K\langle S \rangle$ the universal enveloping algebra $U(\mathfrak{gl}_{\,r})$, generated by the set of variables $S=(s_{ij})$, $i,j=1,\ldots,r$, with $s_{ii} = s_i$, subject to relations $[s_{ij}, s_{kl}] = \delta_{jk} s_{il} - \delta_{il} s_{kj}$. We denote by $D_n\langle S \rangle := D_n \otimes_{\K} \K\langle S \rangle$, which is a $G$-algebra of Lie type by \eg \cite{LS03}. Then the free $\K[x,s,\frac{1}{f}]$-module of rank one generated by the formal symbol $f^s$ has a natural structure of a left $D_n \langle S \rangle$-module:
\[
 s_{ij} \bullet (G(s)\cdot f^s) = s_i \cdot G(s+\epsilon_j-\epsilon_i)
 \frac{f_j}{f_i} \cdot f^s \in \K[x,s,\frac{1}{f}]\cdot f^s,
\]
where $G(s) \in \K[x,s,\frac{1}{f}]$ and $\epsilon_j$ stands for the $j$-th standard basis vector.

\begin{theorem}[Budur et al. \cite{BMS06}]
For every $r$-tuple $f=(f_1,\ldots,f_r)\in \K[x]^r$ there exists a non-zero polynomial in one variable $b(\vars)\in \K[\vars]$ and $r$ differential operators $P_1(S),\ldots,P_r(S)\in D_n\langle S \rangle$ such that
\begin{equation}\label{BSvariety}
\sum_{k=1}^r P_k(S) f_k \cdot f^s = b(s_1+\cdots+s_r) \cdot f^s \in \K[x,s,\frac{1}{f}]\cdot f^s.
\end{equation}
\end{theorem}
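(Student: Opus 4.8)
The plan is to follow Budur et al.~\cite{BMS06} and reduce the assertion to the classical one-variable Bernstein functional equation (Theorem~\ref{bfct and ann}) for a single polynomial in $n+r$ variables. Adjoin new commuting variables $y_1,\ldots,y_r$, let $D_{n+r}$ be the Weyl algebra in $x,y$ and their partials, and set
\[
g \;:=\; \sum_{k=1}^{r} y_k f_k \;\in\; \K[x_1,\ldots,x_n,y_1,\ldots,y_r].
\]
By Theorem~\ref{bfct and ann} there is a non-zero $b(\vars)\in\K[\vars]$ and an operator $Q\in D_{n+r}[\vars]$ with $Q\bullet g^{\vars+1}=b(\vars)\cdot g^{\vars}$ inside the free rank-one $\K[x,y,\tfrac1g][\vars]$-module generated by the symbol $g^{\vars}$.

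First I would normalize $Q$ with respect to the $\Z$-grading on $D_{n+r}$ in which $x,\d_x$ have weight $0$, each $y_i$ has weight $+1$ and each $\d_{y_i}$ has weight $-1$. Writing $\theta:=\sum_{i=1}^{r} y_i\d_{y_i}$ for the associated Euler operator, we have $\theta\bullet g^{m}=m\,g^{m}$, so if $Q=\sum_d Q_d$ is the decomposition into homogeneous components, then $Q_d\bullet g^{\vars+1}$ is a $\theta$-eigenvector of eigenvalue $(\vars+1)+d$. Since $g^{\vars}$ is a $\theta$-eigenvector of eigenvalue $\vars$ and eigenvectors with eigenvalues differing by a non-zero constant are linearly independent, the functional equation forces $Q_d\bullet g^{\vars+1}=0$ for $d\neq-1$; hence we may assume $Q$ is homogeneous of weight $-1$.

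Next I would use the identity $g^{\vars+1}=g\cdot g^{\vars}=\sum_{k=1}^{r} y_k f_k\, g^{\vars}$ to rewrite the equation as
\[
\sum_{k=1}^{r}\bigl(Q\,y_k\bigr)\bullet\bigl(f_k\, g^{\vars}\bigr)\;=\;b(\vars)\cdot g^{\vars},
\]
with $Q\,y_k$ the product in $D_{n+r}[\vars]$. Because $\vars$ acts on $g^{\vars}$ exactly as $\theta$ does, every occurrence of $\vars$ may be replaced by $\theta=\sum_i y_i\d_{y_i}$; then each $Q\,y_k$ lies in $D_{n+r}$, and being homogeneous of weight $0$ (as $Q$ has weight $-1$ and $y_k$ weight $+1$) it lies in the weight-zero subalgebra $(D_{n+r})_0=D_n\otimes_{\K}(D_r)_0$, where $(D_r)_0$ is spanned by the monomials $y^{c}\d_y^{d}$ with $c_1+\cdots+c_r=d_1+\cdots+d_r$. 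It is classical that $(D_r)_0$ is generated as a $\K$-algebra by the operators $y_i\d_{y_j}$, which satisfy exactly the relations $[\,y_i\d_{y_j},\,y_k\d_{y_l}\,]=\delta_{jk}\,y_i\d_{y_l}-\delta_{il}\,y_k\d_{y_j}$ defining $\K\langle S\rangle=U(\mathfrak{gl}_r)$; consequently $s_{ij}\mapsto y_i\d_{y_j}$ extends to a surjection $D_n\langle S\rangle\twoheadrightarrow(D_{n+r})_0$ under which $\sum_i s_{ii}=s_1+\cdots+s_r$ maps to $\theta$, and through which the $D_{n+r}$-action on the $g^{\vars}$-module restricts to a $D_n\langle S\rangle$-action. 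Choosing a preimage $P_k(S)\in D_n\langle S\rangle$ of each $Q\,y_k$ then gives $\sum_{k=1}^{r} P_k(S)\,f_k\bullet g^{\vars}=b(s_1+\cdots+s_r)\cdot g^{\vars}$.

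Finally I would transfer this from the $g^{\vars}$-module to the module $\K[x,s,\tfrac1f]\cdot f^{s}$ of the statement. Expanding $g^{\vars}$ in the monomials $y^{s}f^{s}$ with $\vars=s_1+\cdots+s_r$ and using $\d_{y_k}\bullet g^{\vars}=\vars\, f_k\, g^{\vars-1}$, one verifies that the natural $D_{n+r}$-action on the $g^{\vars}$-module matches the prescribed action of $s_{ij}$ on $f^{s}$ recalled in the excerpt; reading off the piece of $y$-multidegree $s$ turns the last identity into
\[
\sum_{k=1}^{r} P_k(S)\,f_k\cdot f^{s}\;=\;b(s_1+\cdots+s_r)\cdot f^{s}\;\in\;\K[x,s,\tfrac1f]\cdot f^{s},
\]
which is \eqref{BSvariety}, and $b\neq 0$ is inherited from the classical case. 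I expect this last transfer to be the main obstacle: making the comparison of the two module structures precise — in particular checking that the multinomial coefficients produced by the expansion of $g^{\vars}$ are absorbed compatibly with the $D_n\langle S\rangle$-action, and that the single exponent $\vars$ legitimately specializes to $s_1+\cdots+s_r$ — requires the careful bookkeeping of \cite{BMS06}. The remaining steps (the homogeneity reduction, the identification of the weight-zero subalgebra with the image of $D_n\langle S\rangle$, and the production of the operators $P_k$) are essentially formal.
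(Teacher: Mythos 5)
The paper itself does not prove this theorem: it quotes it from Budur et al.\ \cite{BMS06}, so there is no internal proof to compare with and your argument has to stand on its own. Up to the last step it does. The reduction to the classical Bernstein equation (Theorem \ref{bfct and ann}) for $g=\sum_k y_kf_k\in\K[x,y]$, the grading argument that lets you take $Q$ of $y$-weight $-1$, the replacement of $\vars$ by $\theta=\sum_i y_i\partial_{y_i}$ (legitimate because $f_kg^{\vars}$ and $g^{\vars}$ have $y$-degree zero, so $\theta$ acts on them as $\vars$; one should only add that $\theta$ must be substituted to the right of the $D_{n+r}$-coefficients, since it does not commute with them), and the identification of the weight-zero subalgebra $(D_{n+r})_0=D_n\otimes_\K (D_r)_0$ as the image of $D_n\langle S\rangle$ under $s_{ij}\mapsto y_i\partial_{y_j}$, with $s_1+\cdots+s_r\mapsto\theta$, are all correct; since the theorem asks only for some non-zero $b$, it is also harmless that your $b$ comes from $b_g$ rather than being the minimal one.

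The one genuine gap is the step you flag yourself: the transfer from the $g^{\vars}$-module to $\K[x,s,\frac{1}{f}]\cdot f^s$ is asserted and delegated to ``the careful bookkeeping of \cite{BMS06}'', which cannot be left out, because matching the two module structures is exactly what separates the statement from the classical case. It does go through, more easily than you fear. Write $R_k\in(D_{n+r})_0$ for your operator $Qy_k$ after the substitution $\vars\mapsto\theta$, and for $m\in\Z_{\geq 0}$ define the $\K[x]$-linear map $\Phi_m\colon\K[x,s,\frac{1}{f}]f^s\to\K[x,y,\frac{1}{f}]$, $G(s)f^s\mapsto\sum_{|a|=m}\binom{m}{a}G(a)f^ay^a$, the sum over $a\in\Z_{\geq0}^r$ with multinomial coefficients $\binom{m}{a}=\frac{m!}{a_1!\cdots a_r!}$; it sends $f^s$ to $g^m$. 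It intertwines the prescribed $s_{ij}$-action with $y_i\partial_{y_j}$: the $y^a$-coefficient of $y_i\partial_{y_j}\bullet\Phi_m(G(s)f^s)$ is $\binom{m}{a-\epsilon_i+\epsilon_j}(a_j+1)\,G(a-\epsilon_i+\epsilon_j)f^{a-\epsilon_i+\epsilon_j}$, and $\binom{m}{a-\epsilon_i+\epsilon_j}(a_j+1)=\binom{m}{a}a_i$, which is precisely the $y^a$-coefficient of $\Phi_m\bigl(s_iG(s+\epsilon_j-\epsilon_i)\tfrac{f_j}{f_i}f^s\bigr)$; compatibility with $x_l$ and $\partial_{x_l}$ is immediate, so $\Phi_m$ is $D_n\langle S\rangle$-equivariant (with $s_{ij}$ acting as $y_i\partial_{y_j}$ on the target). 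Applying $\Phi_m$ to $u:=\sum_kP_k(S)f_k\cdot f^s-b(s_1+\cdots+s_r)f^s$ yields $\sum_kR_k\bullet(f_kg^m)-b(m)g^m$, which vanishes since it is the specialization $\vars=m$ of your identity for $g^{\vars}$. Writing $u=G_u(s)f^s$ with $G_u\in\K[x,s,\frac{1}{f}]$, the vanishing of all $y^a$-coefficients of $\Phi_m(u)$ for all $m$ gives $G_u(a)=0$ for every $a\in\Z_{\geq0}^r$, hence $G_u=0$ and $u=0$, which is \eqref{BSvariety}; $b\neq0$ is inherited from the classical case, as you say. With this supplement your argument is a complete proof.
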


The {\em Bernstein-Sato polynomial} $b_f(\vars)$ of $f=(f_1,\ldots,f_r)$ is defined to be the monic polynomial of lowest degree in the variable $\vars$ satisfying the equation \eqref{BSvariety}. It turns out, that every root of the \BS polynomial is rational, as in the case of a hypersurface. Let $I$ be the ideal generated by $f_1,\ldots, f_r$ and $Z$ the affine algebraic variety associated with $I$ in $\K^n$. Then it can be verified, that $b_f(\vars)$ is independent of the choice of a system of generators of $I$, and moreover that $b_Z(\vars) = b_f(\vars - \codim Z + 1)$ depends only on $Z$. 

Now, let us denote by $\ann_{D_n\langle S\rangle}(f^s)$ the left ideal of all elements $P(S) \in D_n\langle S \rangle$ such that $P(S)\bullet f^s = 0$. We call this ideal the {\em annihilator} of $f^s$ in $D_n \langle S \rangle$. From the definition of the \BS polynomial it becomes clear that
$$
  (\ann_{D_n \langle S \rangle} (f^s) + \langle f_1,\ldots,f_r \rangle)\cap
  \K[s_1+\cdots+s_r] = \langle b_f(s_1+\ldots+s_r)\rangle.
$$
Since the final intersection can be computed with Principal Intersection \ref{PrincipalIntersect}, the above formula provides an algorithm for computing the Bernstein-Sato polynomial of affine algebraic varieties, once we know a Gr\"obner basis of the annihilator of $f^s$ in $D_n \langle S \rangle$. 

\begin{theorem}\label{SannFsVar}
Let $f=(f_1,\ldots,f_r)$ be an $r$-tuple in $\K[x]^r$ and $D_n \langle \partial t, S \rangle$ the $\K$-algebra generated by $D_n$, $\partial t$ and $S$ with the non-commutative relations of $D_n\langle S \rangle$, described above and additional relations $[s_{ij},\d {t_k}]= \delta_{jk} \d {t_i}$ ($\d {t_i}$ commute mutually with the subalgebra $D_n$). Then the annihilator of $f^s$ in $D_n \langle S\rangle$ can be expressed as follows:
$$
  \bigg[ D_n \langle \partial t, S \rangle
  \Big( s_{ij} + \d {t_i} f_j \, 
,\ \partial_{m} + \sum_{k=1}^r
  \frac{\partial f_k}{\partial x_m} \partial {t_k} \left|
  \begin{array}{c} 1\leq i,j\leq r \\ 1\leq m\leq n \end{array}\right.
  \Big)\bigg] \cap D_n \langle S\rangle.
$$
\end{theorem}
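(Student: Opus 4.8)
The plan is to adapt the module-theoretic argument behind the Briançon--Maisonobe theorem to the enveloping-algebra setting, combined with a Gröbner basis computed ``by hand'' in the style of Section~\ref{newBM}. Write
\[
g_m := \partial_m + \sum_{k=1}^r \frac{\partial f_k}{\partial x_m}\,\partial t_k,\qquad
h_{ij} := s_{ij} + \partial t_i\,f_j,\qquad
J := {}_{D_n\langle \partial t, S\rangle}\big\langle\, \{g_m\}\cup\{h_{ij}\}\,\big\rangle,
\]
so that the claim reads $J\cap D_n\langle S\rangle = \ann_{D_n\langle S\rangle}(f^s)$. First I would extend the given $D_n\langle S\rangle$-action on $N := \K[x,s,\frac{1}{f}]\cdot f^s$ to a $D_n\langle\partial t, S\rangle$-action by setting $\partial t_i\bullet(G(s)\,f^s) := -\,s_i\,G(s-\epsilon_i)\,\frac{1}{f_i}\cdot f^s$; one checks routinely that this respects all the relations of $D_n\langle\partial t, S\rangle$ (notably $[s_{ij},\partial t_k]=\delta_{jk}\partial t_i$ and that $\partial t_k$ commutes with $D_n$), and that $g_m\bullet f^s = 0$ (by the chain rule) and $h_{ij}\bullet f^s = 0$ (because $\partial t_i\bullet(f_j f^s) = -\,s_i\frac{f_j}{f_i}f^s = -\,s_{ij}\bullet f^s$). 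This already yields the inclusion $J\cap D_n\langle S\rangle\subseteq\ann_{D_n\langle S\rangle}(f^s)$.

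For the reverse inclusion I would first show that $\{g_m\}\cup\{h_{ij}\}$ is a left Gröbner basis of $J$ with respect to a monomial well-ordering in which $\{\partial_m\}\cup\{s_{ij}\}$ form a block placed above $\{x_m\}\cup\{\partial t_k\}$, so that $\lm(g_m)=\partial_m$ and $\lm(h_{ij})=s_{ij}$. Running Buchberger's algorithm on these generators, all pairs of leading monomials are coprime, so the generalized Product Criterion (Lemma~\ref{prodCrit}) reduces every $s$-polynomial to the corresponding commutator; a short computation gives $[g_m,g_{m'}]=0$, $[g_m,h_{ij}]=0$, and --- crucially --- $[h_{ij},h_{kl}]=\delta_{jk}h_{il}-\delta_{il}h_{kj}$, which, being a $\K$-linear combination of elements already in the set, reduces to zero. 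Hence the normal forms modulo $J$ are exactly the $\K$-linear combinations of monomials in the $x_m$ and the $\partial t_k$ alone.

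Now I would upgrade this to $J = \ann_{D_n\langle\partial t, S\rangle}(f^s)$. Given $P$ with $P\bullet f^s = 0$, reduce $P$ modulo the Gröbner basis to $\bar P = \sum_{\alpha,\beta}c_{\alpha\beta}\,x^\alpha\partial t^\beta$; since $J\bullet f^s = 0$, also $\bar P\bullet f^s = 0$. An induction on $|\beta|$ gives
\[
x^\alpha\partial t^\beta\bullet f^s = (-1)^{|\beta|}\,x^\alpha\,\Big(\prod_{i=1}^r \frac{s_i(s_i-1)\cdots(s_i-\beta_i+1)}{f_i^{\beta_i}}\Big)\cdot f^s,
\]
and these elements of $N$ are $\K$-linearly independent, since the falling-factorial products $\prod_i s_i(s_i-1)\cdots(s_i-\beta_i+1)$ are a $\K$-basis of $\K[s]$ and hence $\K[x,\frac{1}{f}]$-linearly independent in $\K[x,s,\frac{1}{f}]$, while the coefficients $\frac{x^\alpha}{f^\beta}$ lie in $\K[x,\frac{1}{f}]$. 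Therefore $\bar P = 0$, i.e.\ $P\in J$. Intersecting $J=\ann_{D_n\langle\partial t, S\rangle}(f^s)$ with $D_n\langle S\rangle$ then gives $\{Q\in D_n\langle S\rangle : Q\bullet f^s = 0\}=\ann_{D_n\langle S\rangle}(f^s)$, which is the assertion.

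I expect the main obstacle to be the verification underlying the Gröbner basis step: confirming that $D_n\langle\partial t, S\rangle$ really is a $G$-algebra of Lie type for the chosen block ordering, and that the $\mathfrak{gl}_r$-bracket $[h_{ij},h_{kl}]$ collapses exactly to $\delta_{jk}h_{il}-\delta_{il}h_{kj}$ with no residual $\partial t$-terms (the $\partial t_i f_l$ and $\partial t_k f_j$ contributions coming from the cross-brackets $[s_{ij},\partial t_k f_l]$ and $[\partial t_i f_j,s_{kl}]$ must recombine precisely with the $s_{il}$ and $s_{kj}$ terms to reconstitute the generators $h_{il}$ and $h_{kj}$). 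A secondary point requiring care is the well-definedness of the $\partial t_i$-action on the localized module $N$, where the localization at $f$ and the behaviour under the $\mathfrak{gl}_r$-shifts have to be handled carefully.
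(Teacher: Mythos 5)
Your argument is correct, and the computations it hinges on do check out: with the extended action $\d t_i\bullet(G(s)f^s)=-s_iG(s-\epsilon_i)\frac{1}{f_i}f^s$ all defining relations of $D_n\langle \d t,S\rangle$ are respected, the generators annihilate $f^s$, and the bracket $[h_{ij},h_{kl}]$ really does collapse to $\delta_{jk}h_{il}-\delta_{il}h_{kj}$ with the $\d t$-terms recombining into the generators, so the generalized Product Criterion (Lemma \ref{prodCrit}) applies for your block ordering $\{\d_m,s_{ij}\}\gg\{x_m,\d t_k\}$ exactly as in Section \ref{newBM}. Note that the paper itself does not prove Theorem \ref{SannFsVar}; it defers to \cite{ALM09} and only remarks that the proof is a natural generalization of the Brian\c{c}on--Maisonobe treatment of Section \ref{sAnn}. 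Your write-up is precisely such a generalization, but it is more self-contained than the template of Section \ref{newBM}: there, after the by-hand Gr\"obner basis step, the argument reduces the elimination to the already known statement of \cite{BM02}, whereas you supply the missing faithfulness step yourself, via the formula $x^\alpha\d t^\beta\bullet f^s=(-1)^{|\beta|}x^\alpha\prod_i s_i(s_i-1)\cdots(s_i-\beta_i+1)f_i^{-\beta_i}f^s$ and the $\K[x,\frac{1}{f}]$-linear independence of the falling-factorial products in $\K[x,s,\frac{1}{f}]$. This buys a complete stand-alone proof of $J=\ann_{D_n\langle\d t,S\rangle}(f^s)$, from which the stated intersection formula follows without invoking elimination theory. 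Two small points to make explicit: the linear-independence step uses that all $f_i$ are nonzero (so that $\K[x]\hookrightarrow\K[x,\frac{1}{f}]$), which is implicit in the construction of the module $\K[x,s,\frac{1}{f}]\cdot f^s$; and the chosen block ordering is admissible for the $G$-algebra structure since all commutator tails are of degree at most one, so Lemma \ref{prodCrit} is indeed applicable.
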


Note, that this result and its proof \cite{ALM09} can be presented as natural generalization of the algorithm for computing $\ann_{D_n[s]}(f^s)$ with the method of \BM (cf. Section \ref{sAnn}). 

As Budur et~al. point out \cite[p. 794]{BMS06}, the Bernstein-Sato polynomial for varieties coincides, up to shift of variables, with the $b$-function in \cite[p.\ 194]{SST00}, if the weight vector is chosen appropriately, see also \cite{Shibuta08}. Algorithms for computing the $b$-function have been already discussed in Section \ref{bfctIdeal}, so the procedure \texttt{bfctIdeal} can be immediately applied to this situation. Hence, like for the case of a hypersurface, we have two essentially different ways to compute \BS polynomials for varieties. The comparison of these two methods is the subject of further research.

In the new \textsc{Singular} library \texttt{dmodvar.lib}\footnote{it will be distributed with the next release of {\sc Singular}}, we present the implementations of the following algorithms
\begin{itemize}
\item[] \texttt{SannfsVar}, which computes $\ann_{D_n \langle S \rangle} (f^s)$ according to the Theorem \ref{SannFsVar},
\item[] \texttt{bfctVarIn}, which computes $b_f(s_1+\ldots+s_r)$ using initial ideal approach,
\item[] \texttt{bfctVarAnn}, which computes $b_f(s_1+\ldots+s_r)$ using annihilator-driven approach.
\end{itemize}

\begin{example}
Let $TX = V(x_0^2+y_0^3, 2x_0 x_1 + 3 y_0^2 y_1) \subset \C^4$ the tangent bundle of $X = V(x^2+y^3)\subset \C^2$. Then the Bernstein-Sato polynomial of $TX$ can be computed as follows:

\begin{footnotesize}\begin{verbatim}
LIB "dmodvar.lib"; 
ring R = 0,(x0,x1,y0,y1),Dp;
ideal F = x0^2+y0^3, 2*x0*x1+3*y0^2*y1;
bfctVarAnn(F); // annihilator-driven approach
// alternatiely, one can run
bfctVarIn(F); // approach via initial ideal
\end{verbatim}
\end{footnotesize}

\noindent In both cases we obtain the polynomial
$$
  b_{TX}(\vars) = (\vars+1)^2 (\vars+1/3)^2 (\vars+2/3)^2 (\vars+1/2) (\vars+5/6) (\vars+7/6).
$$
\noindent The annihilator ideal can be computed via executing, in addition to the first 3 lines of
the above code, the following code:

\begin{footnotesize}\begin{verbatim}
def S = SannfsVar(F); // returns a ring
setring S; // in this ring, ideal LD is the annihilator
option(redSB); LD = groebner(LD); // reduced GB of LD
\end{verbatim}
\end{footnotesize}

\noindent There are 15 generators in the Gr\"obner basis of $\ann F^s$:

\medskip
\noindent $3y_{0}^{2}\d x_{1}-2x_{0}\d y_{1},\quad
3y_{0}^{2}\d x_{0}+6y_{0}y_{1}\d x_{1}-2x_{0}\d y_{0}-2x_{1}\d y_{1},\quad 
x_{0}y_{0}\d x_{1}\d y_{0}-x_{0}y_{0}\d x_{0}\d y_{1}+x_{1}y_{0}\d x_{1}\d y_{1}-2x_{0}y_{1}\d x_{1}\d y_{1},\quad
3y_{0}y_{1}\d x_{1}^{2}-x_{0}\d x_{1}\d y_{0}+x_{0}\d x_{0}\d y_{1}-x_{1}\d x_{1}\d y_{1},\quad 
3x_{0}y_{0}y_{1}\d x_{0}\d x_{1}\d y_{1}+6x_{0}y_{1}^{2}\d x_{1}^{2}\d y_{1}-x_{0}^{2}\d x_{1}\d y_{0}^{2}+x_{0}^{2}\d x_{0}\d y_{0}\d y_{1}-2x_{0}x_{1}\d x_{1}\d y_{0}\d y_{1}+x_{0}x_{1}\d x_{0}\d y_{1}^{2}-x_{1}^{2}\d x_{1}\d y_{1}^{2}+3x_{1}y_{0}\d x_{1}^{2}+3x_{0}y_{1}\d x_{1}^{2}-3y_{0}y_{1}\d x_{1}\d y_{1},\quad 
6x_{0}y_{1}^{2}\d x_{1}^{2}\d y_{0}\d y_{1}+3x_{0}y_{0}y_{1}\d x_{0}^{2}\d y_{1}^{2}-3x_{1}y_{0}y_{1}\d x_{0}\d x_{1}\d y_{1}^{2}+6x_{0}y_{1}^{2}\d x_{0}\d x_{1}\d y_{1}^{2}-x_{0}^{2}\d x_{1}\d y_{0}^{3}+x_{0}^{2}\d x_{0}\d y_{0}^{2}\d y_{1}-2x_{0}x_{1}\d x_{1}\d y_{0}^{2}\d y_{1}+x_{0}x_{1}\d x_{0}\d y_{0}\d y_{1}^{2}-x_{1}^{2}\d x_{1}\d y_{0}\d y_{1}^{2}+3x_{1}y_{0}\d x_{1}^{2}\d y_{0}+3x_{0}y_{1}\d x_{1}^{2}\d y_{0}+9x_{0}y_{1}\d x_{0}\d x_{1}\d y_{1}-6y_{0}y_{1}\d x_{1}\d y_{0}\d y_{1}+3y_{0}y_{1}\d x_{0}\d y_{1}^{2}+6y_{1}^{2}\d x_{1}\d y_{1}^{2}+3x_{1}\d x_{1}^{2}+3y_{1}\d x_{1}\d y_{1},\quad 
6x_{0}y_{1}^{2}\d x_{1}^{3}\d y_{1}-x_{0}^{2}\d x_{1}^{2}\d y_{0}^{2}+2x_{0}^{2}\d x_{0}\d x_{1}\d y_{0}\d y_{1}-2x_{0}x_{1}\d x_{1}^{2}\d y_{0}\d y_{1}-x_{0}^{2}\d x_{0}^{2}\d y_{1}^{2}+2x_{0}x_{1}\d x_{0}\d x_{1}\d y_{1}^{2}-x_{1}^{2}\d x_{1}^{2}\d y_{1}^{2}-3x_{0}y_{0}\d x_{0}\d x_{1}^{2}+3x_{1}y_{0}\d x_{1}^{3}+3x_{0}y_{1}\d x_{1}^{3}-2x_{0}\d x_{1}\d y_{0}\d y_{1}+x_{0}\d x_{0}\d y_{1}^{2}-3x_{1}\d x_{1}\d y_{1}^{2}+6y_{0}\d x_{1}^{2},\quad
s_{22}-x_{1}\d x_{1}-y_{1}\d y_{1},\quad 
6s_{21}-3x_{0}\d x_{1}-2y_{0}\d y_{1},\quad 
6s_{11}-3x_{0}\d x_{0}+3x_{1}\d x_{1}-2y_{0}\d y_{0}+4y_{1}\d y_{1},\quad 
s_{12}x_{0}+3y_{0}y_{1}^{2}\d x_{1}-x_{0}x_{1}\d x_{0}+x_{1}^{2}\d x_{1}-x_{0}y_{1}\d y_{0},\quad 
s_{12}y_{0}\d y_{1}-x_{1}y_{0}\d x_{1}\d y_{0}+2x_{1}y_{1}\d x_{1}\d y_{1}-y_{0}y_{1}\d y_{0}\d y_{1}+2y_{1}^{2}\d y_{1}^{2}-y_{0}\d y_{0}+4y_{1}\d y_{1},\quad 
3s_{12} y_0^2+6 x_1 y_0 y_1 \d x_1-3 y_0^2 y_1 \d y_0+6 y_0 y_1^2 \d y_1-2 x_0 x_1 \d y_0,\quad
s_{12}x_{1}\d y_{1}^{2}-3s_{12}y_{0}\d x_{1}+3x_{1}y_{0}y_{1}\d x_{0}\d x_{1}\d y_{1}+6x_{1}y_{1}^{2}\d x_{1}^{2}\d y_{1}-3y_{0}y_{1}^{2}\d x_{1}\d y_{0}\d y_{1}+3y_{0}y_{1}^{2}\d x_{0}\d y_{1}^{2}+6y_{1}^{3}\d x_{1}\d y_{1}^{2}-x_{0}x_{1}\d x_{1}\d y_{0}^{2}+x_{0}x_{1}\d x_{0}\d y_{0}\d y_{1}-2x_{1}^{2}\d x_{1}\d y_{0}\d y_{1}-x_{1}y_{1}\d y_{0}\d y_{1}^{2}+3x_{1}y_{0}\d x_{0}\d x_{1}+3x_{1}y_{1}\d x_{1}^{2}-6y_{0}y_{1}\d x_{1}\d y_{0}+9y_{0}y_{1}\d x_{0}\d y_{1}+18y_{1}^{2}\d x_{1}\d y_{1}-2x_{1}\d y_{0}\d y_{1}+3y_{0}\d x_{0},\quad 
3s_{12}y_{0}y_{1}\d x_{1}-s_{12}x_{1}\d y_{1}-3x_{1}y_{0}y_{1}\d x_{0}\d x_{1}-3y_{0}y_{1}^{2}\d x_{0}\d y_{1}+x_{1}^{2}\d x_{1}\d y_{0}+x_{1}y_{1}\d y_{0}\d y_{1}-3y_{0}y_{1}\d x_{0}+x_{1}\d y_{0}$.

\medskip
\noindent This ideal belongs to the $\K$-algebra $D_4\langle S \rangle$ in 12 variables, as defined in the
beginning of this section. By executing

\begin{footnotesize}\begin{verbatim}
GKdim(LD);
\end{verbatim}
\end{footnotesize}

\noindent we obtain, that the Gel'fand-Kirillov dimension of $D_4\langle S \rangle / \ann F^s$ is 6, the
half of the Gel'fand-Kirillov dimension of $D_4\langle S \rangle$. However, $D_4\langle S \rangle / \ann F^s$ is not a generalized holonomic $D_4\langle S \rangle$-module, since the annihilator of this module contains a central element $s_{12} s_{21}-s_{11} s_{22}-s_{11}$ and hence is not zero.
\end{example}

\section*{Acknowledgements}

We would like to thank Francisco Castro-Jim\'{e}nez, Jos\'{e}-Mar\'{i}a Ucha, Gert-Martin Greuel,
Enrique Artal, Jos\'{e}-Ignacio Cogolludo and Luis Narv\'aez for fruitful discussions and insightful remarks concerning our work.

\vspace{0.5cm}
\noindent Daniel Andres\\
Lehrstuhl D für Mathematik, RWTH Aachen, Templergraben 64, 52062 Aachen, Germany\\
e-mail: \texttt{Daniel.Andres@math.rwth-aachen.de}

\vspace{0.5cm}
\noindent Michael Brickenstein\\
Mathematisches Forschungsinstitut Oberwolfach, Schwarzwaldstr. 9-11, 77709 Oberwolfach-Walke, Germany\\
e-mail: \texttt{brickenstein@mfo.de}

\vspace{0.5cm}
\noindent Viktor Levandovskyy\\
Lehrstuhl D für Mathematik, RWTH Aachen, Templergraben 64, 52062 Aachen, Germany\\
e-mail: \texttt{Viktor.Levandovskyy@math.rwth-aachen.de}

\vspace{0.5cm}
\noindent Jorge Martín-Morales\\
Department of Mathematics-I.U.M.A., University of Zaragoza, C/ Pedro Cerbuna, 12 - 50009, Zaragoza, Spain\\
e-mail: \texttt{jorge@unizar.es}

\vspace{0.5cm}
\noindent Hans Schönemann\\
Fachbereich Mathematik, TU Kaiserslautern, Erwin-Schrödinger-Str. 48, 67632 Kaiserslautern, Germany\\
e-mail: \texttt{hannes@mathematik.uni-kl.de}

\end{document}